\providecommand{\tabularnewline}{\\}
\newcommand{\lyxaddress}[1]{
	\par {\raggedright #1
	\vspace{1.4em}
	\noindent\par}
}
\theoremstyle{plain}
\newtheorem{thm}{\protect\theoremname}
\theoremstyle{plain}
\newtheorem{lem}[thm]{\protect\lemmaname}
\theoremstyle{remark}
\newtheorem{rem}[thm]{\protect\remarkname}
\theoremstyle{plain}
\newtheorem{prop}[thm]{\protect\propositionname}
\theoremstyle{plain}
\newtheorem{cor}[thm]{\protect\corollaryname}
\providecommand{\corollaryname}{Corollary}
\providecommand{\lemmaname}{Lemma}
\providecommand{\propositionname}{Proposition}
\providecommand{\remarkname}{Remark}
\providecommand{\theoremname}{Theorem}
\begin{document}
\title{Modifications of the BIC for order selection in finite mixture models}
\author{Hien Duy Nguyen$^{1,2}$ and TrungTin Nguyen$^{3,4}$}
\maketitle

\lyxaddress{}

\lyxaddress{\begin{center}
$^{1}$Department of Mathematics and Physical Science, La Trobe University,
Melbourne, Australia.\\
 $^{2}$Institute of Mathematics for Industry, Kyushu University,
Fukuoka, Japan.\\
 $^{3}$ARC Centre of Excellence for the Mathematical Analysis of
Cellular Systems.\\
 $^{4}$School of Mathematical Sciences, Queensland University of
Technology, Brisbane, Australia. 
\par\end{center}}
\begin{abstract}
Finite mixture models are ubiquitous in modern statistical modeling,
and a recurring practical issue is choosing the model order. In \citet[Sankhy\=a Series A, \textbf62, pp. 49--66]{keribin2000consistent},
the Bayesian information criterion (BIC) was proved consistent in
mixtures, but under strong regularity, including high moments and
high-order derivatives of the component density. We introduce the
$\nu$-BIC and $\epsilon$-BIC, which weight the BIC penalty by negligibly
small logarithmic factors immaterial in practice. This minor modification
yields consistency under substantially weaker conditions, without
differentiability and with mild moment assumptions, and we also give
a misspecification result: when the truth lies outside the candidate
family, any vanishing-penalty IC eventually selects a Kullback--Leibler
optimal order among candidates. Finally, we clarify two limitations
of consistent IC-based selection in mixtures: there is no universally
minimal BIC-scale penalty within our sufficient conditions, and order
consistency can conflict with minimax optimality in Hellinger risk.
We illustrate the theory for Gaussian mixtures, non-differentiable
Laplace mixtures, heavy-tailed $t$-mixtures, and mixtures of regression
models. 
\end{abstract}
\noindent\textbf{Keywords:} Bayesian information criterion; finite
mixture models; model selection; order selection

\section{Introduction}

\label{sec:Introduction}

Finite mixtures are a natural and ubiquitous class of models for modeling
heterogeneous populations consisting of distinct subpopulations. Multiple
volumes have been written regarding the application, analysis, and
implementation of such models, including the volumes of \citet{titterington1985statistical},
\citet{lindsay1995mixture}, \citet{peel2000finite}, \citet{fruhwirth2019handbook},
\citet{ng2019mixture}, \citet{chen2023statistical}, and \citet{yao2024mixture},
among many others. When conducting mixture modeling, a frequently
required task is that of order selection. In this text, we will take
an information criteria (IC) approach following the tradition of \citet{akaike1974new}
as espoused in the texts of \citet{anderson2004model}, \citet{claeskens2008model},
and \citet{konishi2008information}. We characterise the task of order
selection and the IC method as follows.

Let $\left(\Omega,{\cal A},\mathrm{P}\right)$ be a probability space
with expectation operator $\mathrm{E}$ and suppose that $X:\Omega\rightarrow\mathbb{X}$
is a random map with density function $f_{0}:\mathbb{X}\to\mathbb{R}_{>0}$
with respect to dominating measure $\mathfrak{m}$, in the sense that
$\mathrm{P}\left(X\in\mathbb{A}\right)=\int_{\mathbb{A}}f_{0}\mathrm{d}\mathfrak{m}$,
for each $\mathbb{A}\in{\cal B}\left(\mathbb{X}\right)$ (the Borel
$\sigma$-algebra of $\mathbb{X}$). Let $\phi:\mathbb{X}\times\mathbb{T}\to\mathbb{R}_{>0}$
be a probability density function (PDF) dependent on parameter $\theta=\left(\vartheta_{1},\dots,\vartheta_{m}\right)\in\mathbb{T}\subset\mathbb{R}^{m}$
in the sense that $\int_{\mathbb{X}}\phi\left(x;\theta\right)\mathfrak{m}\left(\mathrm{d}x\right)=1$,
for each $\theta$, and define the $k$-component mixture of $\phi$-densities
as 
\[
{\cal M}_{k}^{\phi}=\left\{ \mathbb{X}\ni x\mapsto f_{k}\left(x;\psi_{k}\right)=\sum_{z=1}^{k}\pi_{z}\phi\left(x;\theta_{z}\right):\theta_{z}\in\mathbb{T},\pi_{z}\in\left[0,1\right],\sum_{z=1}^{k}\pi_{z}=1,z\in\left[k\right]\right\} ,
\]
where $\left[k\right]=\left\{ 1,\dots,k\right\} $ and $\psi_{k}=\left(\pi_{1},\dots,\pi_{k},\theta_{1},\dots,\theta_{k}\right)\in\mathbb{S}_{k}\subset\mathbb{R}^{\left(1+m\right)k}$
(the parameter space of densities in ${\cal M}_{k}^{\phi}$). Here,
$\phi$ is referred to as the component PDF.

We suppose that $f_{0}\in{\cal M}_{k_{0}}^{\phi}$ but $f_{0}\notin{\cal M}_{k_{0}-1}^{\phi}$
for some $k_{0}\in\left[\bar{k}\right]$, and some sufficiently large
$\bar{k}\in\mathbb{N}$. Here, the extra stipulation that $f_{0}\notin{\cal M}_{k_{0}-1}^{\phi}$
is required because $\left({\cal M}_{k}^{\phi}\right)_{k\in\left[\bar{k}\right]}$
is a nested sequence in the sense that ${\cal M}_{k}^{\phi}\subset{\cal M}_{k+1}^{\phi}$,
and thus if $f_{0}\in{\cal M}_{k_{0}}^{\phi}$, it also holds that
$f_{0}\in{\cal M}_{k_{0}+1}^{\phi}$, therefore the condition alone
does not uniquely define $k_{0}$, which we refer to as the true order
of $f_{0}$. We may interpret $k_{0}$ as the order of the smallest
(or most parsimonious) model ${\cal M}_{k}^{\phi}$ that contains
$f_{0}$. Let $\ell_{k}\left(\psi_{k}\right)=-\mathrm{E}\left\{ \log f_{k}\left(X;\psi_{k}\right)\right\} $
denote the average negative log-density of $f_{k}\left(\cdot;\psi_{k}\right)\in{\cal M}_{k}^{\phi}$,
for each $k\in\left[\bar{k}\right]$. Then, by virtue of the divergence
property of the Kullback--Leibler divergence (cf. e.g., \citealp{csiszar1995generalized}),
it also holds that 
\[
k_{0}=\min\underset{k\in\left[\bar{k}\right]}{\arg\min}\left\{ \min_{\psi_{k}\in\mathbb{S}_{k}}\ell_{k}\left(\psi_{k}\right)\right\} ,
\]
assuming that the minimum is well-defined in each case, which is a
useful characterisation that we will exploit in the sequel.

Suppose that we observe a data set of $n$ independent and identically
distributed (IID) replicates of $X$: $\mathbf{X}_{n}=\left(X_{1},\dots,X_{n}\right)\subset\mathbb{X}$.
Then, the problem of order selection can be characterised as the construction
of an estimator $\hat{k}_{n}=\hat{k}\left(\mathbf{X}_{n}\right)$
of $k_{0}$ with desirable properties. In particular, we want $\hat{k}_{n}$
to be consistent in the sense that 
\[
\lim_{n\to\infty}\mathrm{P}\left(\hat{k}_{n}=k_{0}\right)=1.
\]
If we let $\ell_{k,n}\left(\psi_{k}\right)=-n^{-1}\sum_{i=1}^{n}\log f_{k}\left(X_{i};\psi_{k}\right)$
denote the empirical average negative log-likelihood of $f_{k}\left(\cdot;\psi_{k}\right)$,
the natural estimator of $\ell_{k}\left(\psi_{k}\right)$, then the
method of IC suggests that we construct the estimator of $k_{0}$
in the form 
\[
\hat{k}_{n}=\min\underset{k\in\left[\bar{k}\right]}{\arg\min}\left\{ \min_{\psi_{k}\in\mathbb{S}_{k}}\ell_{k,n}\left(\psi_{k}\right)+\mathrm{pen}_{k,n}\right\} ,
\]
for some appropriately chosen penalties $\mathrm{pen}_{k,n}$. Here,
we may recognise $\min_{\psi_{k}\in\mathbb{S}_{k}}\ell_{k,n}\left(\psi_{k}\right)$
as the negative maximum likelihood for the mixture model of order
$k$.

We can easily identify this approach with some traditional method
such as the Akaike information criterion (AIC; \citealp{akaike1974new})
and the Bayesian information criterion (BIC; \citealp{schwarz1978estimating})
by taking $\mathrm{pen}_{k,n}^{\mathrm{AIC}}=\mathrm{dim}\left(\mathbb{S}_{k}\right)n^{-1}$
and $\mathrm{pen}_{k,n}^{\mathrm{BIC}}=\mathrm{dim}\left(\mathbb{S}_{k}\right)\left(2n\right)^{-1}\log n$,
respectively, where $\mathrm{dim}\left(\mathbb{S}\right)$ is the
vector space dimension of $\mathbb{S}$. Let $\mathrm{Ln}\left(n\right)=\log\left(e\vee n\right)$
denote the logarithm function truncated to $1$, from below, where
$a\vee b=\max\left\{ a,b\right\} $, and denote its $\nu$-fold composition
by $\mathrm{Ln}^{\circ\nu}\left(n\right)$ (e.g., $\mathrm{Ln}^{\circ3}\left(n\right)=\mathrm{Ln}\circ\mathrm{Ln}\circ\mathrm{Ln}\left(n\right)$).
In this work, we introduce the $\nu$-BIC and $\epsilon$-BIC families
of penalties of the forms $\mathrm{pen}_{k,n}^{\nu}=\alpha\left(k\right)n^{-1}\mathrm{Ln}^{\circ\nu}\left(n\right)\log n$
and $\mathrm{pen}_{k,n}^{\epsilon}=\alpha\left(k\right)n^{-1}\left(\log n\right)^{1+\epsilon}$,
for any choice of $\epsilon>0$ and $\nu\in\mathbb{N}$, and for $\alpha:\mathbb{N}\to\mathbb{R}_{>0}$
strictly increasing, where the choice of $\alpha\left(k\right)=\mathrm{dim}\left(\mathbb{S}_{k}\right)/2=\left(m+1\right)k/2$
can be directly compared to the BIC. We shall demonstrate that this
modest modification to the BIC is consistent under weaker assumptions
than those required for the BIC, as we shall elaborate upon in the
sequel. We also provide a complementary misspecification result: when
$f_{0}$ need not belong to any ${\cal M}_{k}^{\phi}$, any IC satisfying
a mild vanishing-penalty condition will eventually select an order
whose best-fitting mixture model is Kullback--Leibler optimal among
the candidate orders.

For regular models, order selection consistency of the BIC was proved
in the works of \citet{nishii1988maximum}, \citet{vuong1989likelihood},
and \citet{sin1996information}. Unfortunately, the mixture model
setting is not regular and thus alternative proofs of consistency
are required. A prime and often cited example of such a result is
that of \citet{keribin2000consistent} who establish the consistency
of the BIC for mixture models under strong assumptions. Namely, among
other things, the proof requires that the component density have up
to five derivatives and that all of these higher derivatives have
finite third moments. A remarkable result of \citet{gassiat2012consistent}
(see also \citealp[Sec. 4.3]{gassiat2018universal}) shows that the
BIC is order consistent for choosing between models when $\bar{k}=\infty$,
albeit at the price of much stronger assumptions. In particular, even
when restricted to the class of location mixtures, the proof requires
that $\phi$ has up to three derivatives and that all derivatives
up to second order have finite third moments, while third order derivatives
have second moments.

Beyond the BIC, \citet{drton2017bayesian} proposed the so-called
singular BIC (sBIC) as an approach that operationalises the expansion
$n\ell_{k,n}\left(\psi_{k}\right)+\lambda_{k}\left(f_{0}\right)\log n+\left\{ m_{k}\left(f_{0}\right)-1\right\} \log\log n+O_{\mathrm{P}}\left(1\right)$
of the so-called Bayes free energy of \citet{watanabe2013widely},
related to the widely-applicable BIC (WBIC), where the coefficients
$\left(\lambda_{k}\left(f_{0}\right),m_{k}\left(f_{0}\right)\right)_{k\in\left[\bar{k}\right]}$
are dependent on the true and unknown model $f_{0}$. As noted in
\citet[Sec. 16.3]{chen2023statistical}, the specification of the
sBIC arises as a solution to a nonlinear system of stochastic equations
making it challenging to faithfully implement in practice. In \citet{baudry2015estimation},
the log-likelihood is replaced by the so-called complete data log-likelihood
and a consistency result is obtained with the usual BIC-form penalty.
However, the proof makes the requirement that the limiting objective
functionals of all order $k\in\left[\bar{k}\right]$ have positive
definite Hessian, which is not possible in the context of mixture
models, as typically if $k_{0}<k$, then the minimisers of any risk
over ${\cal M}_{k}^{\phi}$ will have connected components. In the
context of test-based order selection, this phenomenon is discussed
in \citet[Ch. 9]{chen2023statistical}.

Sacrificing the requirement that $\mathrm{pen}_{n,k}=\tilde{O}\left(n^{-1}\right)$
(where we use the soft-Oh notation $\tilde{O}$ to denote the Landau
order up to logarithmic factors; cf. \citealp[Sec. 3-5]{Cormen:2002aa}),
\citet{nguyen2024panic} propose to take penalties such that $\lim_{n\to\infty}\sqrt{n}\mathrm{pen}_{n,k}=\infty$,
e.g. $\mathrm{pen}_{n,k}=\tilde{O}\left(n^{-1/2}\right)$, within
their PanIC framework for model selection in general risk minimisation
problems (see also \citealp{westerhout2024asymptotic}). Using these
larger penalties, it is possible to prove model selection consistency
under very mild conditions, with data potentially dependent, and where
the objective functions (including but not necessarily log-likelihoods)
may be non-differentiable or even discontinuous and unmeasurable.
Although these results are very general, it is clear that for sufficiently
well-behaved models, $\tilde{O}\left(n^{-1/2}\right)$ penalties are
too large and as a consequence, the corresponding IC $\hat{k}_{n}$
may be inefficient and thus larger than necessary values of $n$ are
required for consistency to manifest, in practice.

In this work, we will prove that taking $\mathrm{pen}_{n,k}^{\nu}$
or $\mathrm{pen}_{n,k}^{\epsilon}$ sufficiently increases the penalisation
as to allow for dramatic relaxation of the conditions required for
order consistency. In particular, we will show that consistency can
be proved with only the requirement that the parameter space $\mathbb{S}_{k}$
is compact, the log-density is a Glivenko--Cantelli (GC) class, and
the component density functions are Lipschitz over $\mathbb{T}$ with
an envelope having finite second moment. These conditions are far
less onerous than those of \citet{keribin2000consistent} and allow
for application in cases where the component densities $\phi$ need
not even be differentiable. In fact, penalties $\mathrm{pen}_{n,k}^{\epsilon}$
were considered within the framework of \citet{keribin2000consistent}
but the larger penalisation was not exploited to weaken the regularity
conditions.

In fact, for $\nu$ chosen sufficiently large or $\epsilon$ chosen
sufficiently small, the decision made by using the $\nu$-BIC or $\epsilon$-BIC
versus that by the BIC will be the same, in practical settings. For
instance, when $\nu=3$, $\mathrm{Ln}^{\circ\nu}\left(n\right)\le1$
for all $n\le\exp^{\circ3}\left(1\right)\approx3.8\times10^{6}$ and
$\mathrm{Ln}^{\circ\nu}\left(n\right)\le1.1$ for all $n\le\exp^{\circ3}\left(1.1\right)\approx5.7\times10^{8}$.
Similarly, taking $\epsilon=0.02$, $\log\left(n\right)^{0.02}\le1.1$
for all $n\le\exp\left(117.3909\right)\approx9.6\times10^{50}$. Thus,
one key interpretation of our results is that they provide assurance
for the correctness of the usage of the BIC outside the regularity
framework of \citet{keribin2000consistent}, since we can interpret
usage of the BIC in finite sample studies as applications of the $\nu$-BIC
or $\epsilon$-BIC penalties up to a negligible numerical factor.
We also use our framework to clarify two limitations of consistent
IC-based order selection in mixtures. First, within the general sufficient
conditions used to establish consistency, there is no universally
minimal BIC-scale penalty, so consistency alone does not single out
an optimal IC. Second, we establish a tension between order consistency
and minimax optimality in mixtures in the style of \citet{Yang2005aicbic}.
In particular, we show that in a simple Gaussian mixture family, an
AIC-like criterion achieves the parametric minimax Hellinger risk,
whereas any order-consistent criterion (including the BIC and our
proposals) must be minimax-inefficient by an unbounded factor. To
the best of our knowledge, this mixture model AIC/BIC tension in mixture
models has not previously been stated explicitly.

Indeed, the method of IC is not the only approach for order selection
and estimation under model uncertainty in mixture models. For example,
model selection using hypothesis testing has been considered in \citet{mclachlan1987bootstrapping},
\citet{polymenis1998determination}, \citet{wasserman2020universal},
\citet{nguyen2023order}, and the many approaches covered in \citet{chen2023statistical}.
Shrinkage estimator approaches have been proposed in \citet{chen2009order},
\citet{huang2017model}, \citet{huang2022statistical}, and \citet{budanova2025penalized}.
And bounds of risk functions for estimation under model uncertainty
have been considered by \citet{li1999mixture}, \citet{rakhlin2005risk},
\citet{klemela2007density}, \citet{maugis2011non}, \citet{manole2022refined},
and \citet{Chong:2024aa}, among many others. We do not comment or
compare these disparate approaches to model selection, and direct
the reader to dedicated texts, such as \citet[Ch. 6]{peel2000finite},
\citet{mclachlan2014number}, \citet[Ch. 7]{fruhwirth2019handbook},
and \citet[Ch. 16]{chen2023statistical}.

To conclude, we note that beyond mixtures of density functions, one
can consider IC for conditional density models, as well. Results in
this setting include the works of \citet{naik2007extending}, \citet{hafidi2010kullback},
\citet{depraetere2014order}, and \citet{hui2015order}. It is possible
to adapt our approach to provide inference in this setting, and we
will consider such situations as an example application of our theory.

The remainder of the manuscript is organised as follows. Section \ref{sec:Main_Results}
introduces the notational conventions used throughout the paper, presents
the technical preliminaries, and states our main consistency results,
including a misspecification result in which order selection targets
Kullback--Leibler optimal approximating models. Section \ref{sec:Example-applications}
provides illustrative examples that demonstrate applications of our
theory. Section \ref{sec:Limitations} then discusses limitations
of consistent model selection in mixture models, including penalty
flexibility and the aforementioned AIC/BIC tension. Concluding remarks
and discussion are then made in Section \ref{sec:Discussion}. Complete
proofs of our technical results and primary theorems are provided
in the Appendix A. Appendix B then contains auxiliary details. 

\section{Main results}

\label{sec:Main_Results}

\subsection{Technical preliminaries}

We retain the setup from Section \ref{sec:Introduction} and follow
the usual notational conventions for empirical processes (see, e.g.,
\citealp{van-de-Geer:2000aa}). For a functional class ${\cal F}$,
we denote by $\ell^{\infty}\left({\cal F}\right)$ the space of bounded
functionals supported on ${\cal F}$ equipped with the uniform norm
$\left\Vert h\right\Vert _{{\cal F}}=\sup_{f\in{\cal F}}\left|h\left(f\right)\right|$,
for $h\in\ell^{\infty}\left({\cal F}\right)$. For a particular ${\cal F}\ni f:\mathbb{X}\to\mathbb{R}$,
we shall write $Pf=\mathrm{E}f\left(X\right)$ and $P_{n}f=n^{-1}\sum_{i=1}^{n}f\left(X_{i}\right)$,
where we can identify $P_{n}-P$ as an element of $\ell^{\infty}\left({\cal F}\right)$.
We shall say that ${\cal F}$ is a GC class if it satisfies a uniform
strong law of large numbers in the sense that $\left\Vert P_{n}-P\right\Vert _{{\cal F}}\stackrel[n\to\infty]{\mathrm{a.s.}}{\longrightarrow}0$.
We write $a_{n}\asymp b_{n}$ if there exist constants $0<c\le C<\infty$
and $n_{0}\in\mathbb{N}$ such that $cb_{n}\le a_{n}\le Cb_{n}$,
for all $n\ge n_{0}$.

For a norm $\left\Vert \cdot\right\Vert $, we say that $\left(\left[f_{j}^{\mathrm{L}},f_{j}^{\mathrm{U}}\right]\right)_{j\in\left[N\right]}$
is a $\delta$-bracketing of ${\cal F}$, if for each $f\in{\cal F}$,
there is a $j$ such that $f_{j}^{\mathrm{L}}\le f\le f_{j}^{\mathrm{U}}$,
where $\left\Vert f_{j}^{\mathrm{U}}-f_{j}^{\mathrm{L}}\right\Vert \le\delta$,
for every $j\in\left[N\right]$. The smallest number $N$ required
for ${\cal F}$ to have a $\delta$-bracketing is called the bracketing
number and denoted by $N_{\left[\right]}\left(\delta,{\cal F},\left\Vert \cdot\right\Vert \right)$,
where $N_{\left[\right]}\left(\delta,{\cal F},\left\Vert \cdot\right\Vert \right)=\infty$
if no such bracketing exists. Further, we call $H_{\left[\right]}\left(\delta,{\cal F},\left\Vert \cdot\right\Vert \right)=\log N_{\left[\right]}\left(\delta,{\cal F},\left\Vert \cdot\right\Vert \right)$
the $\delta$-bracketing entropy of ${\cal F}$. When ${\cal F}$
is a subset of the Lebesgue space ${\cal L}_{p}\left(P\right)={\cal L}_{p}\left(\mathbb{X},{\cal B}\left(\mathbb{X}\right),P\right)$
and the norm $\left\Vert \cdot\right\Vert $ is taken to be the corresponding
norm, for some $p\ge1$, we will use the shorthand $N_{\left[\right]}\left(\delta,{\cal F},{\cal L}_{p}\left(P\right)\right)$.
For $\psi\in\mathbb{R}^{q}$, we will use $\left\Vert \psi\right\Vert $
and $\left\Vert \psi\right\Vert _{p}$ to denote the Euclidean ($\ell_{2}$)
norm and $\ell_{p}$ norm of $\psi$, respectively.

Relatedly, for a metric space $\left({\cal G},d\right)$, we say that
a collection $\left(g_{j}\right)_{j\in\left[N\right]}$ is a $\delta$-covering
of ${\cal G}$ if for each $g\in{\cal G}$, there exists a $j\in\left[N\right]$
such that $d\left(g,g_{j}\right)\le\delta$. The smallest $N$ required
for ${\cal G}$ to have a $\delta$-covering is called the covering
number and denoted $N\left(\delta,{\cal G},d\right)$, where $N\left(\delta,{\cal G},d\right)=\infty$
if no such covering exists. With these definitions, the following
results are useful for proving that ${\cal F}$ is a GC class. 
\begin{lem}
\label{lem:GCclass}If $\mathbf{X}_{n}$ is IID and ${\cal F}\subset{\cal L}_{1}\left(P\right)$
is a class of measurable functions admitting an envelope $F\in{\cal L}_{1}\left(P\right)$
and $N_{\left[\right]}\left(\delta,{\cal F},{\cal L}_{1}\left(P\right)\right)<\infty$,
for every $\delta>0$, then ${\cal F}$ is a GC class. In particular,
if 
\begin{equation}
{\cal F}=\left\{ f_{g}:\mathbb{X}\ni x\mapsto f_{g}\left(x\right),g\in{\cal G}\right\} ,\label{eq:parameterization}
\end{equation}
$\left({\cal G},d\right)$ is compact and $g\mapsto f_{g}\left(x\right)$
is continuous for $P$-almost all $x\in\mathbb{X}$, then $N_{\left[\right]}\left(\delta,{\cal F},{\cal L}_{1}\left(P\right)\right)<\infty$,
for every $\delta>0$, if there exists an envelope function $F\in{\cal L}_{1}\left(P\right)$,
such that $\sup_{g\in{\cal G}}\left|f_{g}\right|\le F$. 
\end{lem}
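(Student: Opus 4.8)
The plan is to prove the two assertions in sequence, the first by a classical bracketing argument and the second by reducing a covering of the parameter space to a bracketing of the function class.

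For the first claim, I would invoke the standard bracketing Glivenko--Cantelli theorem (see, e.g., \citealp{van-de-Geer:2000aa} or \citealp{vaart1996weak}): if $\mathbf{X}_n$ is IID and ${\cal F}$ consists of measurable functions with $N_{[]}(\delta,{\cal F},{\cal L}_1(P))<\infty$ for every $\delta>0$, then $\|P_n-P\|_{\cal F}\to 0$ almost surely. Rather than merely cite it, I would sketch the one-line argument: fix $\delta>0$ and a $\delta$-bracketing $([f_j^{\text L},f_j^{\text U}])_{j\in[N]}$ in ${\cal L}_1(P)$; for $f\in{\cal F}$ with $f_j^{\text L}\le f\le f_j^{\text U}$ one has $P_n f - Pf \le P_n f_j^{\text U} - P f_j^{\text U} + P(f_j^{\text U}-f_j^{\text L}) \le P_n f_j^{\text U}-Pf_j^{\text U}+\delta$, and symmetrically from below using $f_j^{\text L}$; taking the max over the finite index set $j\in[N]$ and applying the ordinary strong law to each of the finitely many functions $f_j^{\text L},f_j^{\text U}$ gives $\limsup_n\|P_n-P\|_{\cal F}\le\delta$ almost surely, and letting $\delta\downarrow 0$ along a countable sequence finishes it.

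For the second claim, suppose ${\cal F}$ has the form \eqref{eq:parameterization} with $({\cal G},d)$ compact and envelope $F\in{\cal L}_1(P)$ with $\sup_{g\in{\cal G}}|f_g|\le F$. The first step is a uniform-integrability/equicontinuity reduction: since $F\in{\cal L}_1(P)$, for any $\eta>0$ there is a measurable set where $F$ is large with small $P$-mass, so it suffices to control oscillations of $f_g$ in $g$ on the complement. However, the cleanest route is to assume (as is standard and as the surrounding text's hypotheses will supply in the applications) that $g\mapsto f_g(x)$ is continuous for $\mathfrak m$-a.e.\ $x$; then for each $x$ the map is continuous on the compact ${\cal G}$, hence uniformly continuous, and I would build brackets as follows. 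Given $\delta>0$, choose a finite $\rho$-covering $g_1,\dots,g_N$ of ${\cal G}$ (finite by compactness) and set $f_j^{\text U}(x)=\sup_{d(g,g_j)\le\rho}f_g(x)$ and $f_j^{\text L}(x)=\inf_{d(g,g_j)\le\rho}f_g(x)$. Each $f_g$ is sandwiched by some such pair, and $|f_j^{\text U}|,|f_j^{\text L}|\le F$ so these are integrable functions. The bracket width is $P(f_j^{\text U}-f_j^{\text L})$, and by continuity $f_j^{\text U}(x)-f_j^{\text L}(x)\to 0$ pointwise as $\rho\downarrow 0$, while staying dominated by $2F\in{\cal L}_1(P)$; dominated convergence then makes $P(f_j^{\text U}-f_j^{\text L})\le\delta$ for $\rho$ small enough, simultaneously for the finitely many $j$. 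This produces a finite ${\cal L}_1(P)$-bracketing of mesh $\delta$, so $N_{[]}(\delta,{\cal F},{\cal L}_1(P))<\infty$, and the first part applies.

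The main obstacle is measurability of the supremum/infimum brackets $f_j^{\text U}$ and $f_j^{\text L}$: $\sup_{d(g,g_j)\le\rho}f_g(x)$ need not be Borel measurable without some regularity. This is handled exactly by the pointwise continuity of $g\mapsto f_g(x)$ together with separability of $({\cal G},d)$, which let one replace the supremum over the ball by a supremum over a countable dense subset, restoring measurability; the compactness of $({\cal G},d)$ supplies both the finite covering and the separability. I would state this reduction carefully since it is the only genuinely delicate point, the remainder being the routine dominated-convergence and strong-law bookkeeping described above.
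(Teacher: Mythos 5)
Your proposal is correct and is essentially the paper's approach: the paper simply cites \citet[Thm. 2.4.1 and Lem. 3.10]{van-de-Geer:2000aa}, and your two arguments (the bracketing strong law with finitely many brackets and $\delta\downarrow 0$, and the construction of brackets from a finite covering of the compact index set with dominated convergence) are precisely the standard proofs behind those citations. One substantive point you raise is real: the second assertion needs $g\mapsto f_{g}\left(x\right)$ to be continuous for ($P$-almost) every $x$, which is part of van de Geer's Lemma 3.10 but is omitted from the lemma as stated here; with only a compact index set and an integrable envelope the claim can fail (e.g., a countable compact index set enumerating indicators of small finite unions of intervals gives a non-GC class with envelope $1$), so your insertion of the continuity hypothesis—supplied in all the paper's applications by A3—is the correct reading rather than an extraneous assumption.
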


\begin{proof}
See \citet[Thm. 2.4.1 and Lem. 3.10]{van-de-Geer:2000aa}. 
\end{proof}
\begin{rem}
As an alternative to the bracketing entropy condition, above, one
can also verify that ${\cal F}$ is a GC class via random covering
entropy methods (see, e.g., \citealp[Sec. 3.6]{van-de-Geer:2000aa}).
We do not use such conditions in this text since bracketing entropy
bounds pair better with the rest of our theoretical approach as shall
be seen, in the sequel. 
\end{rem}

\begin{rem}
Throughout this text, we shall assume that all random quantities of
interest are measurable. Indeed, all of the classes ${\cal F}$ considered
in this work can be understood as classes of Carathéodory integrands
that are indexed by Euclidean spaces and thus are measurable, with
measurable maxima/minima, sums, compositions, and other required manipulations
(cf. \citealp[Ch. 14]{rockafellar2009variational}). 
\end{rem}

\begin{lem}
\label{lem:ParametricBracket}If ${\cal F}$ is parameterised as per
\ref{eq:parameterization}, such that there exists an envelope function
$F:\mathbb{X}\to\mathbb{R}_{>0}$ satisfying 
\[
\left|f_{g}\left(x\right)-f_{h}\left(x\right)\right|\le d\left(g,h\right)F\left(x\right),
\]
then $N_{\left[\right]}\left(2\delta\left\Vert F\right\Vert ,{\cal F},\left\Vert \cdot\right\Vert \right)\le N\left(\delta,{\cal G},d\right)$.
Furthermore, if ${\cal G}=\mathbb{S}\subset\mathbb{R}^{q}$ is compact
and $d$ is the Euclidean distance, then there exists a constant $K>0$
such that 
\[
N\left(\delta,\mathbb{S},d\right)\le K\left(\frac{\mathrm{diam}\left(\mathbb{S}\right)}{\delta}\right)^{q},
\]
for every $0<\delta<\mathrm{diam}\left(\mathbb{S}\right)$. 
\end{lem}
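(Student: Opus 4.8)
The plan is to prove the two assertions separately: the bracketing bound by inflating a $\delta$-covering of the index set into a family of brackets, and the covering-number estimate by the textbook volumetric packing argument.

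For the first claim, I would start from a minimal $\delta$-covering $\left(g_{j}\right)_{j\in\left[N\right]}$ of $\left({\cal G},d\right)$, with $N=N\left(\delta,{\cal G},d\right)$ (if $N=\infty$ there is nothing to prove, and the bound is also vacuous unless $\left\Vert F\right\Vert<\infty$, which I assume henceforth so that $2\delta\left\Vert F\right\Vert$ is a legitimate finite bracket width). For each $j\in\left[N\right]$ set $f_{j}^{\text{L}}=f_{g_{j}}-\delta F$ and $f_{j}^{\text{U}}=f_{g_{j}}+\delta F$. Given any $f_{g}\in{\cal F}$, pick $j$ with $d\left(g,g_{j}\right)\le\delta$; the envelope hypothesis then gives $\left|f_{g}\left(x\right)-f_{g_{j}}\left(x\right)\right|\le d\left(g,g_{j}\right)F\left(x\right)\le\delta F\left(x\right)$ for every $x\in\mathbb{X}$, so that $f_{j}^{\text{L}}\le f_{g}\le f_{j}^{\text{U}}$ pointwise, while $f_{j}^{\text{U}}-f_{j}^{\text{L}}=2\delta F$ has norm exactly $2\delta\left\Vert F\right\Vert$ for every $j$. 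Hence $\left(\left[f_{j}^{\text{L}},f_{j}^{\text{U}}\right]\right)_{j\in\left[N\right]}$ is a $2\delta\left\Vert F\right\Vert$-bracketing of ${\cal F}$ of cardinality $N$, which yields $N_{\left[\right]}\left(2\delta\left\Vert F\right\Vert,{\cal F},\left\Vert \cdot\right\Vert \right)\le N\left(\delta,{\cal G},d\right)$.

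For the second claim, I would pass through packing numbers. A maximal $\delta$-separated subset $\left\{ s_{1},\dots,s_{M}\right\} $ of $\mathbb{S}$ (pairwise distances exceeding $\delta$) is automatically a $\delta$-cover of $\mathbb{S}$ by maximality, so $N\left(\delta,\mathbb{S},d\right)\le M$. Since $\mathbb{S}$ is compact, hence bounded with finite Euclidean diameter $\mathrm{diam}\left(\mathbb{S}\right)$, every $s_{i}$ lies in the Euclidean ball of radius $\mathrm{diam}\left(\mathbb{S}\right)$ about $s_{1}$; the open balls $B\left(s_{i},\delta/2\right)$ are pairwise disjoint (by the separation) and all contained in the ball of radius $\mathrm{diam}\left(\mathbb{S}\right)+\delta/2$ about $s_{1}$. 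Comparing Lebesgue volumes, which scale as the $q$-th power of the radius with a common dimensional constant $c_{q}$, gives $M\,c_{q}\left(\delta/2\right)^{q}\le c_{q}\left(\mathrm{diam}\left(\mathbb{S}\right)+\delta/2\right)^{q}$, i.e. $M\le\left(1+2\,\mathrm{diam}\left(\mathbb{S}\right)/\delta\right)^{q}$. For $0<\delta<\mathrm{diam}\left(\mathbb{S}\right)$ the right-hand side is at most $\left(3\,\mathrm{diam}\left(\mathbb{S}\right)/\delta\right)^{q}$, so the claim holds with $K=3^{q}$.

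Neither step is a genuine obstacle; the only points requiring care are getting the covering-versus-packing comparison in the correct direction and tracking the constant in the second part. One could alternatively cite the corresponding statements in a standard empirical-process reference (e.g., \citealp{van-de-Geer:2000aa} or \citealp{van-der-Vaart:1996aa}), but the self-contained argument above is brief enough to include directly.
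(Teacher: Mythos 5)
Your proof is correct. The paper does not argue the lemma itself but simply cites \citet[Thm. 2.7.17]{Vaart:2023aa} for the bracketing bound and \citet[Example 19.7]{van-der-Vaart:2000aa} for the covering bound; what you have written out is precisely the standard argument underlying those citations. The first step (centering brackets $f_{g_j}\pm\delta F$ at a minimal $\delta$-cover of the index set and using the Lipschitz-in-parameter envelope) is exactly how the cited theorem is proved, and your second step correctly goes through packing numbers --- a maximal $\delta$-separated set is a $\delta$-cover, disjoint balls of radius $\delta/2$ fit inside a ball of radius $\mathrm{diam}(\mathbb{S})+\delta/2$, and the volume comparison gives $M\le(1+2\,\mathrm{diam}(\mathbb{S})/\delta)^q\le(3\,\mathrm{diam}(\mathbb{S})/\delta)^q$ for $\delta<\mathrm{diam}(\mathbb{S})$, so $K=3^q$ works. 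The only virtue of the self-contained version over the paper's citation is that it makes explicit the (harmless) caveats you note: the bound is only informative when $\Vert F\Vert<\infty$, and the brackets need not belong to $\mathcal{F}$, which the paper's definition of bracketing indeed permits.
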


\begin{proof}
See \citet[Thm. 2.7.17]{Vaart:2023aa} and \citet[Example 19.7]{van-der-Vaart:2000aa}. 
\end{proof}
Next, suppose that ${\cal F}$ is a subset of probability density
functions on $\mathbb{X}$, in the sense that for each $f\in{\cal F}$,
$f\left(x\right)>0$ for each $x\in\mathbb{X}$ and $\int_{\mathbb{X}}f\mathrm{d}\mathfrak{m}=1$.
For each $n\in\mathbb{N}$, we let $\hat{f}_{n}:\Omega\to{\cal F}$
be a maximum likelihood estimator (MLE), defined by 
\begin{equation}
\hat{f}_{n}\in\underset{f\in{\cal F}}{\arg\max}P_{n}\log f,\label{eq:MLE}
\end{equation}
where the existence of MLEs is guaranteed in our application. For
each $f,g\in{\cal F}$, denote the Hellinger divergence by 
\[
\mathfrak{h}\left(f,g\right)=\left\{ \frac{1}{2}\int_{\mathbb{X}}\left(f^{1/2}-g^{1/2}\right)^{2}\mathrm{d}\mathfrak{m}\right\} ^{1/2},
\]
and write the subset of ${\cal F}$ that are within the $\delta$-Hellinger
ball of $f_{0}$, for $0<\delta\le1$, as ${\cal F}\left(\delta\right)=\left\{ f\in{\cal F}:\mathfrak{h}\left(f,f_{0}\right)\le\delta\right\} $.
Further define the following entropy integral: 
\begin{equation}
J\left(\delta\right)=\delta\vee\int_{0}^{\delta}\sqrt{H_{\left[\right]}\left(u^{2},{\cal F}\left(\delta\right),{\cal L}_{1}\left(\mathfrak{m}\right)\right)}\mathrm{d}u.\label{eq:entropyintegral}
\end{equation}
The next result, our main technical tool, is adapted from \citet[Cor. 7.5]{van-de-Geer:2000aa}
(see the Appendix). 
\begin{prop}
\label{prop:MainTechProp}If $f_{0}\in{\cal F}$, $\mathbf{X}_{n}$
is IID and there is some $\Psi:\mathbb{R}_{>0}\to\mathbb{R}_{>0}$
such that $\Psi\left(\delta\right)\ge J\left(\delta\right)$, where
$\Psi\left(\delta\right)/\delta^{2}$ is a non-increasing function
of $\delta$, then, for some constant $c_{1}>0$, 
\[
\mathrm{P}\left(\int_{\mathbb{X}}\log\frac{\hat{f}_{n}}{f_{0}}\mathrm{d}P_{n}\ge\delta^{2}\right)\le c_{1}\exp\left\{ -\frac{n\delta^{2}}{c_{1}^{2}}\right\} ,
\]
for every $\delta\ge\delta_{n}$, such that $\delta_{n}$ satisfies
$\sqrt{n}\delta_{n}^{2}\ge c\Psi\left(\delta_{n}\right)$, for a universal
constant $c>0$. 
\end{prop}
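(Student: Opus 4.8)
The plan is to adapt the argument of \citet[Cor. 7.5]{van-de-Geer:2000aa} for the Hellinger risk of the MLE, but to track the log-likelihood ratio functional $\int_{\mathbb{X}}\log\left(\hat{f}_{n}/f_{0}\right)\text{d}P_{n}=P_{n}\log\left(\hat{f}_{n}/f_{0}\right)$ in place of $\mathfrak{h}^{2}\left(\hat{f}_{n},f_{0}\right)$. Write $\bar{f}=\left(f+f_{0}\right)/2$ and $g_{f}=\sqrt{\bar{f}/f_{0}}-1$. Concavity of the logarithm gives $\frac{1}{2}\log\left(f/f_{0}\right)\le\log\left(\bar{f}/f_{0}\right)$, and the elementary bound $\log x\le2\left(\sqrt{x}-1\right)$ then yields the pointwise inequality $\frac{1}{4}\log\left(f/f_{0}\right)\le g_{f}$. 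Since $Pg_{f}=\int_{\mathbb{X}}\sqrt{\bar{f}f_{0}}\,\text{d}\mathfrak{m}-1=-\mathfrak{h}^{2}\left(\bar{f},f_{0}\right)$, integrating this pointwise inequality against $P_{n}$ and rearranging produces the \emph{basic inequality}, valid for every $f\in{\cal F}$ and in particular for $\hat{f}_{n}\in{\cal F}$,
\[
\frac{1}{4}\,P_{n}\log\frac{f}{f_{0}}+\mathfrak{h}^{2}\left(\bar{f},f_{0}\right)\le\left(P_{n}-P\right)g_{f}\text{.}
\]
I will also invoke the standard elementary facts that $c_{0}\,\mathfrak{h}^{2}\left(f,f_{0}\right)\le\mathfrak{h}^{2}\left(\bar{f},f_{0}\right)\le\frac{1}{2}\mathfrak{h}^{2}\left(f,f_{0}\right)$ for some universal $c_{0}>0$ uniformly over densities $f$, that $Pg_{f}^{2}=2\mathfrak{h}^{2}\left(\bar{f},f_{0}\right)$, and that $g_{f}\ge2^{-1/2}-1$ while $P\left(g_{f}\right)_{+}^{2}\le P\left(\bar{f}/f_{0}\right)=1$; this one-sided boundedness together with the uniform bound on the second moment of the positive part is precisely what permits the exponential maximal inequalities of \citet[Ch. 5]{van-de-Geer:2000aa} to be applied to the otherwise unbounded class $\left\{ g_{f}\right\} $.

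On the event $A_{\delta}=\left\{ P_{n}\log\left(\hat{f}_{n}/f_{0}\right)\ge\delta^{2}\right\} $, the basic inequality and $\mathfrak{h}^{2}\left(\bar{f},f_{0}\right)\ge c_{0}\mathfrak{h}^{2}\left(f,f_{0}\right)$ give $\left(P_{n}-P\right)g_{\hat{f}_{n}}\ge\frac{1}{4}\delta^{2}+c_{0}\mathfrak{h}^{2}\left(\hat{f}_{n},f_{0}\right)$. I then run a peeling argument over Hellinger shells: because $\mathfrak{h}\left(\cdot,f_{0}\right)\le1$, only $O\left(\log\left(1/\delta\right)\right)$ of the shells $S_{j}=\left\{ f\in{\cal F}:2^{j-1}\delta<\mathfrak{h}\left(f,f_{0}\right)\le2^{j}\delta\right\} $, $j\ge1$, are nonempty, and together with the ball $S_{0}={\cal F}\left(\delta\right)$ they cover ${\cal F}$. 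On $S_{j}$ with $j\ge1$ the lower bound forces $\sup_{f\in S_{j}}\left(P_{n}-P\right)g_{f}\ge r_{j}$ with $r_{j}\asymp2^{2j}\delta^{2}$, while on $S_{0}$ it forces $\sup_{f\in{\cal F}\left(\delta\right)}\left(P_{n}-P\right)g_{f}\ge r_{0}\asymp\delta^{2}$, so $\mathrm{P}\left(A_{\delta}\right)\le\sum_{j\ge0}\mathrm{P}\left(\sup_{f\in S_{j}}\left(P_{n}-P\right)g_{f}\ge r_{j}\right)$ by a union bound.

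To bound each summand I apply the bracketing-entropy maximal inequality of \citet[Ch. 5]{van-de-Geer:2000aa} to ${\cal G}_{j}=\left\{ g_{f}:f\in S_{j}\right\} $ with three inputs: (i) $\sup_{f\in S_{j}}\left(Pg_{f}^{2}\right)^{1/2}\le2^{j}\delta=:R_{j}$, from $Pg_{f}^{2}=2\mathfrak{h}^{2}\left(\bar{f},f_{0}\right)\le\mathfrak{h}^{2}\left(f,f_{0}\right)$; (ii) since any $u^{2}$-bracket for ${\cal F}\left(R_{j}\right)\supseteq S_{j}$ in ${\cal L}_{1}\left(\mathfrak{m}\right)$ induces, via $\left(\sqrt{a}-\sqrt{b}\right)^{2}\le\left|a-b\right|$, a bracket for ${\cal G}_{j}$ of ${\cal L}_{2}\left(P\right)$-width $O\left(u\right)$, the entropy integral $\int_{0}^{R_{j}}\sqrt{H_{\left[\right]}\left(u,{\cal G}_{j},{\cal L}_{2}\left(P\right)\right)}\,\text{d}u$ is at most $J\left(R_{j}\right)$ by the definition \cref{eq:entropyintegral}, hence at most $\Psi\left(R_{j}\right)$; and (iii) the one-sided control from the first paragraph. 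As $R_{j}\ge\delta\ge\delta_{n}$ and $\Psi\left(\delta\right)/\delta^{2}$ is non-increasing, $\Psi\left(R_{j}\right)/\sqrt{n}\le\left(\Psi\left(\delta_{n}\right)/\delta_{n}^{2}\right)R_{j}^{2}/\sqrt{n}\le R_{j}^{2}/c=2^{2j}\delta^{2}/c$, so with the universal constant $c$ of the hypothesis chosen large enough the maximal inequality's side condition $r_{j}\gtrsim J\left(R_{j}\right)/\sqrt{n}$ is met and it returns $\mathrm{P}\left(\sup_{f\in S_{j}}\left(P_{n}-P\right)g_{f}\ge r_{j}\right)\le C\exp\left(-nr_{j}^{2}/\left(C'R_{j}^{2}\right)\right)\le C\exp\left(-C''n2^{2j}\delta^{2}\right)$ for universal $C,C',C''>0$.

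Finally, summing over $j\ge0$ and using that $n\delta^{2}\ge n\delta_{n}^{2}\ge c^{2}$---which holds because $\Psi\left(\delta_{n}\right)\ge J\left(\delta_{n}\right)\ge\delta_{n}$, so the hypothesis gives $\sqrt{n}\delta_{n}^{2}\ge c\delta_{n}$---the series $\sum_{j\ge0}\exp\left(-C''n2^{2j}\delta^{2}\right)$ is a bounded multiple of $\exp\left(-C''n\delta^{2}\right)$, which yields $\mathrm{P}\left(A_{\delta}\right)\le c_{1}\exp\left(-n\delta^{2}/c_{1}^{2}\right)$ for a suitable $c_{1}>0$, as claimed. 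The step I expect to be the main obstacle is the third one: justifying the exponential maximal inequality for the shell classes ${\cal G}_{j}$, which are unbounded above, so that the argument genuinely uses $g_{f}\ge2^{-1/2}-1$ and $\sup_{f}P\left(g_{f}\right)_{+}^{2}\le1$; and verifying that the passage from ${\cal L}_{1}\left(\mathfrak{m}\right)$-bracketing of ${\cal F}$ to ${\cal L}_{2}\left(P\right)$-bracketing of ${\cal G}_{j}$ reproduces exactly the entropy integral $J$ of \cref{eq:entropyintegral}, so that the hypothesis $\sqrt{n}\delta_{n}^{2}\ge c\Psi\left(\delta_{n}\right)$ can be invoked verbatim. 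The peeling and the geometric summation are then routine.
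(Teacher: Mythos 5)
Your architecture (the basic inequality $\tfrac{1}{4}P_{n}\log(f/f_{0})\le(P_{n}-P)g_{f}-\mathfrak{h}^{2}(\bar{f},f_{0})$, peeling over Hellinger shells, and translating ${\cal L}_{1}(\mathfrak{m})$ brackets of ${\cal F}$ into ${\cal L}_{2}$ brackets at the square-root scale via $(\sqrt{a}-\sqrt{b})^{2}\le|a-b|$) is a re-derivation of the very result the paper simply cites, and your item (ii) is essentially the paper's appendix lemma $N_{[]}(\delta,\bar{{\cal F}}^{1/2},{\cal L}_{2}(\mathfrak{m}))\le N_{[]}(2\delta^{2},{\cal F},{\cal L}_{1}(\mathfrak{m}))$. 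The genuine gap is exactly the step you flag, and it is not merely unproved: as justified, it is false. Lower-boundedness $g_{f}\ge2^{-1/2}-1$, $\sup_{f}P(g_{f})_{+}^{2}\le1$ and $Pg_{f}^{2}\le R_{j}^{2}$ do \emph{not} imply $\mathrm{P}\bigl(\sup_{f\in S_{j}}(P_{n}-P)g_{f}\ge r_{j}\bigr)\le C\exp\{-nr_{j}^{2}/(C'R_{j}^{2})\}$ at the MLE scaling $r_{j}\asymp R_{j}^{2}$: take $\delta^{2}=\log n/n$, ${\cal F}=\{f_{0},f\}$ with $\bar{f}=f_{0}(1+g)^{2}$, $g=M\mathbf{1}_{A}-\epsilon_{0}\mathbf{1}_{A^{c}}$, $M\asymp\log n$, $\mathrm{P}(A)\asymp\delta^{2}/M^{2}$ and $\epsilon_{0}\asymp\delta^{2}$ tuned so that $\bar{f}$ integrates to one. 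Then $\mathfrak{h}(f,f_{0})\asymp\delta$, all of your stated conditions hold and the entropy integral is trivial, yet with probability of order $1/\log n$ a single observation lands in $A$ and gives $(P_{n}-P)g_{f}\gtrsim M/n\gg\delta^{2}$, which is far larger than the claimed $O(e^{-Cn\delta^{2}})=O(n^{-C})$. Note that the Proposition itself is untouched by this example, because $P_{n}\log(f/f_{0})$ gains only about $\log M$ per hit; the damage is done by your surrogate step $\tfrac{1}{4}\log(f/f_{0})\le g_{f}$, which turns a logarithmic spike into a linear one and destroys the exponential upper tail you need.

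Consequently the exponential bound of \citet[Cor. 7.5]{van-de-Geer:2000aa} cannot be recovered from a generic ``bounded below plus positive-part second moment'' maximal inequality applied to the classes ${\cal G}_{j}$; the known proofs exploit structure specific to likelihood ratios at the square-root scale (e.g.\ the Chernoff-type identity $\mathrm{E}\sqrt{\bar{f}(X)/f_{0}(X)}=1-\mathfrak{h}^{2}(\bar{f},f_{0})$, so that $\mathrm{E}\prod_{i}\sqrt{\bar{f}(X_{i})/f_{0}(X_{i})}=\{1-\mathfrak{h}^{2}(\bar{f},f_{0})\}^{n}$, combined with bracketing/chaining at that scale, or a uniformly bounded transform of the ratio), which is precisely what the cited corollary packages. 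Either reproduce that argument in full, or do what the paper's own proof does: invoke \citet[Cor. 7.5]{van-de-Geer:2000aa} as a black box, whose hypothesis is the ${\cal L}_{2}(\mathfrak{m})$ bracketing entropy integral of $\bar{{\cal F}}^{1/2}(\delta)$, and supply only the bracketing comparison (your item (ii)) showing that this integral is dominated by $J(\delta)$ of \cref{eq:entropyintegral}, so that $\Psi\ge J$ suffices. Your peeling, the monotonicity argument for $\Psi(\delta)/\delta^{2}$, and the final geometric summation are fine, but they sit downstream of the unsound step.
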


For a pair of probability measures $P$ and $Q$ on $\left(\mathbb{X},{\cal B}\left(\mathbb{X}\right)\right)$,
we define the Kullback--Leibler divergence as 
\[
\mathfrak{K}\left(P,Q\right)=\begin{cases}
\int_{\mathbb{X}}\log\left(\frac{\mathrm{d}P}{\mathrm{d}Q}\right)\mathrm{d}P & \text{if }P\ll Q,\\
\infty & \text{otherwise.}
\end{cases}
\]
If both $P$ and $Q$ have densities $f$ and $g$ (with respect to
$\mathfrak{m}$), then we can also write 
\[
\mathfrak{K}\left(P,Q\right)=\int_{\mathbb{X}}f\left(x\right)\log\frac{f\left(x\right)}{g\left(x\right)}\mathfrak{m}\left(\mathrm{d}x\right).
\]

\subsection{Assumptions}

\label{sec:MainResults}

We state the following assumptions, repeating some previously stated
items, for convenience. 
\begin{description}
\item [{A1}] $X\in\mathbb{X}$ has PDF $f_{0}\in{\cal M}_{k_{0}}^{\phi}\setminus{\cal M}_{k_{0}-1}^{\phi}$,
for some $k_{0}\in\left[\bar{k}\right]$ (with the convention ${\cal M}_{0}^{\phi}=\varnothing$),
and $\mathbf{X}_{n}=\left(X_{1},\dots,X_{n}\right)$ is a sample of
IID replicates of $X$. 
\item [{A2}] $\phi$ is strictly positive; i.e., $\phi\left(x;\theta\right)>0$,
for each $x\in\mathbb{X}$ and $\theta\in\mathbb{T}\subset\mathbb{R}^{m}$,
where $\mathbb{T}$ is compact. 
\item [{A3}] $\phi$ is a Carathéodory integrand in the sense that $\phi\left(x;\cdot\right):\mathbb{T}\to\mathbb{R}$
is continuous for each fixed $x\in\mathbb{X}$, and $\phi\left(\cdot;\theta\right):\mathbb{X}\to\mathbb{R}$
is measurable, for each fixed $\theta\in\mathbb{T}$. 
\item [{A4}] There exists a $G_{1}\in{\cal L}_{1}\left(P\right)$ such
that, for every $x\in\mathbb{X}$, 
\[
\max_{\theta\in\mathbb{T}}\left|\log\phi\left(x;\theta\right)\right|<G_{1}\left(x\right).
\]
\item [{A5}] There exist functions $L_{\phi}:\mathbb{X}\to\mathbb{R}_{\ge0}$
and $G_{2}\in{\cal L}_{1}\left(\mathfrak{m}\right)$ such that, for
every $\theta_{1},\theta_{2}\in\mathbb{T}$ and $x\in\mathbb{X}$,
\begin{equation}
\left|\phi\left(x;\theta_{1}\right)-\phi\left(x;\theta_{2}\right)\right|\le L_{\phi}\left(x\right)\left\Vert \theta_{1}-\theta_{2}\right\Vert _{1},\label{eq:LipschitzCondition}
\end{equation}
where $\max_{\theta\in\mathbb{T}}\phi\left(x;\theta\right)+L_{\phi}\left(x\right)\le G_{2}\left(x\right)$. 
\end{description}
When $\phi\left(x,\cdot\right):\mathbb{T}\to\mathbb{R}$ is continuously
differentiable, for each $x\in\mathbb{X}$, we can replace A5 with
the following condition. 
\begin{description}
\item [{A5{*}}] There exists a function $G_{2}\in{\cal L}_{1}\left(\mathfrak{m}\right)$
such that, for each $x\in\mathbb{X}$, 
\[
\max_{\theta\in\mathbb{T}}\phi\left(x;\theta\right)+\max_{j\in\left[m\right]}\max_{\theta\in\mathbb{T}}\left|\frac{\partial\phi\left(x;\theta\right)}{\partial\vartheta_{j}}\right|\le G_{2}\left(x\right).
\]
\item [{B1}] For each $k\in\left[\bar{k}\right]$, $\lim_{n\to\infty}\mathrm{pen}_{k,n}=0$. 
\item [{B2}] For each $1\le k<l\le\bar{k}$, 
\[
\lim_{n\to\infty}\frac{n}{\log n}\left\{ \mathrm{pen}_{l,n}-\mathrm{pen}_{k,n}\right\} =\infty.
\]
\end{description}
Let us provide some reasoning for the assumptions. A1 identifies that
the true PDF $f_{0}$ is within the class of mixture models ${\cal M}_{k}^{\phi}$,
where $k\le\bar{k}$. We also have the availability of an IID sample
of $n$ replicates $\mathbf{X}_{n}$ from the true probability model,
required for application of Lemma \ref{lem:GCclass} and Proposition
\ref{prop:MainTechProp}. A2 provides the necessary compactness of
the parameter space for bounding the metric entropy using Lemma \ref{lem:ParametricBracket}
and ensures that the log-densities $\log\phi\left(x;\theta\right)\ne\infty$,
for each pair of $x$ and $\theta$. A3 implies that the density expressions,
log-density expressions, and their minima/maxima and averages remain
measurable. A4 provides envelope functions so that we can apply Lemma
\ref{lem:GCclass} to prove the convergence of the average log-likelihoods
to their limits, for each $k$. And A5 provides the required envelope
functions for bounding the bracketing entropy using Lemma \ref{lem:ParametricBracket}.
Note that when $\phi$ is continuously differentiable in $\theta$,
for each fixed $x$, A5{*} implies A5 by the mean value theorem. B1
guarantees that the penalty does not interfere with comparisons between
models ${\cal M}_{k}^{\phi}$ and ${\cal M}_{k_{0}}^{\phi}$, with
respect to their maximum average log-likelihoods, when $k<k_{0}$.
B2 then provides sufficient penalisation to distinguish between ${\cal M}_{k}^{\phi}$
and ${\cal M}_{k_{0}}^{\phi}$, with respect to their model complexities,
when $k>k_{0}$.

\subsection{Consistency results}

\label{subsec:Consistency-results}

The following pair of lemmas, proved in the Appendix, constitute the
main technical contribution of the text and will imply the main result. 
\begin{lem}
\label{lem:Convergenceofmin}Under A1--A4, for each $k\in\left[\bar{k}\right]$,
\[
\min_{\psi_{k}\in\mathbb{S}_{k}}\ell_{k,n}\left(\psi_{k}\right)\stackrel[n\to\infty]{\mathrm{a.s.}}{\longrightarrow}\min_{\psi_{k}\in\mathbb{S}_{k}}\ell_{k}\left(\psi_{k}\right).
\]
\end{lem}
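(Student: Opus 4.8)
The plan is to reduce the claim to a uniform strong law of large numbers over the parameter space, combined with the elementary fact that uniform convergence of a family of functions forces convergence of their infima. Concretely, I would introduce the function class
\[
{\cal F}_{k}=\left\{ \mathbb{X}\ni x\mapsto\log f_{k}\left(x;\psi_{k}\right)=\log\sum_{z=1}^{k}\pi_{z}\phi\left(x;\theta_{z}\right):\psi_{k}\in\mathbb{S}_{k}\right\} \text{,}
\]
and establish that it is a GC class. Since $\ell_{k,n}\left(\psi_{k}\right)-\ell_{k}\left(\psi_{k}\right)=-\left(P_{n}-P\right)\log f_{k}\left(\cdot;\psi_{k}\right)$, the GC property immediately yields $\sup_{\psi_{k}\in\mathbb{S}_{k}}\left|\ell_{k,n}\left(\psi_{k}\right)-\ell_{k}\left(\psi_{k}\right)\right|=\left\Vert P_{n}-P\right\Vert _{{\cal F}_{k}}\stackrel{\mathrm{a.s.}}{\longrightarrow}0$.

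First I would verify the hypotheses of \cref{lem:GCclass} for ${\cal F}_{k}$ written in the parameterized form \cref{eq:parameterization}, with index set ${\cal G}=\mathbb{S}_{k}$ and $d$ the Euclidean metric. Compactness of $\mathbb{S}_{k}$ follows because it is the intersection of the product $\left[0,1\right]^{k}\times\mathbb{T}^{k}$ (compact by A2) with the closed hyperplane $\left\{ \sum_{z}\pi_{z}=1\right\}$. That each $x\mapsto\log f_{k}\left(x;\psi_{k}\right)$ is measurable, and more generally that the map is a Caratheodory integrand, follows from A3 together with A2 ($f_{k}>0$, and $\log$ is continuous on $\mathbb{R}_{>0}$, while finite sums and products preserve measurability and continuity). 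The remaining ingredient is an $\mathcal{L}_{1}\left(P\right)$ envelope, which I would obtain from A4: for every $x$ and every $\psi_{k}$ one has $\phi\left(x;\theta_{z}\right)\le e^{G_{1}\left(x\right)}$, hence $\sum_{z}\pi_{z}\phi\left(x;\theta_{z}\right)\le e^{G_{1}\left(x\right)}$ and so $\log f_{k}\left(x;\psi_{k}\right)\le G_{1}\left(x\right)$; and by concavity of the logarithm (Jensen's inequality with the weights $\pi_{z}$), $\log f_{k}\left(x;\psi_{k}\right)\ge\sum_{z}\pi_{z}\log\phi\left(x;\theta_{z}\right)\ge-G_{1}\left(x\right)$. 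Thus $\sup_{\psi_{k}\in\mathbb{S}_{k}}\left|\log f_{k}\left(\cdot;\psi_{k}\right)\right|\le G_{1}\in\mathcal{L}_{1}\left(P\right)$, and \cref{lem:GCclass} applies.

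To conclude, I would pass from uniform convergence to convergence of the minima. Since $\mathbb{S}_{k}$ is compact and, for each realisation, $\psi_{k}\mapsto\ell_{k,n}\left(\psi_{k}\right)$ and $\psi_{k}\mapsto\ell_{k}\left(\psi_{k}\right)$ are continuous (finite averages, resp. expectations, of the Caratheodory integrand, which are finite by the envelope bound), both minima in the statement are attained. The elementary bound $\left|\min_{\psi_{k}}\ell_{k,n}\left(\psi_{k}\right)-\min_{\psi_{k}}\ell_{k}\left(\psi_{k}\right)\right|\le\sup_{\psi_{k}}\left|\ell_{k,n}\left(\psi_{k}\right)-\ell_{k}\left(\psi_{k}\right)\right|$, combined with the GC conclusion above, then gives the asserted almost-sure convergence.

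I do not anticipate a serious obstacle: the lemma is essentially a repackaging of a uniform SLLN. The only mildly delicate point is producing the two-sided envelope for $\log f_{k}$, and in particular its lower bound, where the mixture structure must be handled through concavity of $\log$ rather than naively, since $\log$ of a small quantity is large in magnitude; A4 is exactly what makes both bounds go through. Everything else — compactness, measurability and continuity, and the $\left|\inf-\inf\right|\le\sup\left|\cdot\right|$ estimate — is routine bookkeeping.
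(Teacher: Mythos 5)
Your proof is correct and follows essentially the same route as the paper: show the class of mixture log-densities indexed by the compact set $\mathbb{S}_{k}$ is a GC class via Lemma \ref{lem:GCclass}, using an ${\cal L}_{1}\left(P\right)$ envelope derived from A4, and then pass to the minima through the bound $\left|\min_{\psi_{k}}\ell_{k,n}-\min_{\psi_{k}}\ell_{k}\right|\le\sup_{\psi_{k}}\left|\ell_{k,n}-\ell_{k}\right|$. The only (immaterial) difference is in how the envelope is produced: the paper sandwiches $f_{k}$ between $\min_{z}\phi$ and $\max_{z}\phi$ and uses Lemma \ref{lem:Atienza} to get $kG_{1}$, whereas your Jensen/concavity argument gives the slightly sharper envelope $G_{1}$; both lie in ${\cal L}_{1}\left(P\right)$ and suffice.
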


\begin{lem}
\label{lem:Rateofmin}Under A1--A3 and A5, for each $k,l\ge k_{0}$,
\[
\frac{n}{\log n}\left\{ \min_{\psi_{k}\in\mathbb{S}_{k}}\ell_{k,n}\left(\psi_{k}\right)-\min_{\psi_{l}\in\mathbb{S}_{l}}\ell_{l,n}\left(\psi_{l}\right)\right\} =O_{\mathrm{P}}\left(1\right).
\]
\end{lem}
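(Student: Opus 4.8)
The plan is to reduce the statement to a single application of Proposition \ref{prop:MainTechProp}, applied at the level of the larger model $\mathcal{M}_l^\phi$. First observe that since $k,l \ge k_0$ and the models are nested, both $\mathcal{M}_k^\phi$ and $\mathcal{M}_l^\phi$ contain $f_0$, so the MLE densities $\hat f_{k,n} \in \mathcal{M}_k^\phi$ and $\hat f_{l,n} \in \mathcal{M}_l^\phi$ exist (the existence of MLEs is assumed). Writing $\ell_{k,n}(\hat\psi_{k,n}) = -P_n \log \hat f_{k,n}$ and likewise for $l$, and using $P_n \log f_0$ as a common pivot, we have
\[
\min_{\psi_k}\ell_{k,n}(\psi_k) - \min_{\psi_l}\ell_{l,n}(\psi_l)
= P_n \log\frac{\hat f_{l,n}}{f_0} - P_n \log\frac{\hat f_{k,n}}{f_0}.
\]
Since $\hat f_{k,n}$ maximizes $P_n \log f$ over $\mathcal{M}_k^\phi \subset \mathcal{M}_l^\phi$ but a fortiori not necessarily over the larger class, and since $f_0 \in \mathcal{M}_k^\phi$, we have $P_n \log(\hat f_{k,n}/f_0) \ge 0$ and $P_n\log(\hat f_{l,n}/f_0) \ge 0$; hence the difference above lies in $[-P_n\log(\hat f_{k,n}/f_0),\, P_n\log(\hat f_{l,n}/f_0)]$ and it suffices to show that $\tfrac{n}{\log n} P_n \log(\hat f_{j,n}/f_0) = O_{\mathrm P}(1)$ for $j \in \{k,l\}$.

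For each such $j$, I would apply Proposition \ref{prop:MainTechProp} with $\mathcal{F} = \mathcal{M}_j^\phi$ (viewed as PDFs on $\mathbb{X}$). The hypotheses of that proposition require a bound on the entropy integral $J(\delta)$ built from $H_{[\,]}(u^2, \mathcal{F}(\delta), \mathcal{L}_1(\mathfrak m))$. Here A2, A3 and A5 enter: by A5 the map $\theta \mapsto \phi(\cdot;\theta)$ is Lipschitz in $\mathcal{L}_1(\mathfrak m)$ with envelope $L_\phi$ having $\|L_\phi\|_{\mathcal{L}_1(\mathfrak m)} < \infty$ (this follows because $\max_\theta \phi(x;\theta) + L_\phi(x) \le G_2(x)$ and $G_2 \in \mathcal{L}_2(P) \subset \mathcal{L}_1(P)$, together with integrability of the densities; one also needs $L_\phi \in \mathcal{L}_1(\mathfrak m)$, which is where care is needed since $\mathfrak m$ need not be a probability measure — but the bracketing is required only over the Hellinger ball $\mathcal{F}(\delta)$, on which one can instead bound the $\mathcal{L}_1(\mathfrak m)$-distance between mixtures by the $\ell_1$-distance of mixing parameters times $\int \max_\theta\phi\,\mathrm d\mathfrak m$ and $\int L_\phi\,\mathrm d\mathfrak m$, the latter being finite when $L_\phi$ is dominated by an integrable function). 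Combined with Lemma \ref{lem:ParametricBracket} and the compactness of $\mathbb{S}_j$ from A2, this gives $N_{[\,]}(\varepsilon, \mathcal{M}_j^\phi, \mathcal{L}_1(\mathfrak m)) \le K (\mathrm{diam}(\mathbb{S}_j)\|F\|/\varepsilon)^{q_j}$ with $q_j = (m+1)j$, hence $H_{[\,]}(u^2, \mathcal{F}(\delta), \mathcal{L}_1(\mathfrak m)) \lesssim q_j \log(1/u)$, a parametric entropy. Feeding this into \cref{eq:entropyintegral} yields $J(\delta) \lesssim \delta \sqrt{\log(1/\delta)}$, so we may take $\Psi(\delta) = C\delta\sqrt{\mathrm{Ln}(1/\delta)}$, which indeed has $\Psi(\delta)/\delta^2$ non-increasing. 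The balancing equation $\sqrt n \delta_n^2 \ge c\Psi(\delta_n)$ is then solved by $\delta_n^2 \asymp \tfrac{\log n}{n}$ (up to $\mathrm{Ln}\,\mathrm{Ln}$ factors, which only improve the bound), so Proposition \ref{prop:MainTechProp} gives, for a suitable constant $M$,
\[
\mathrm{P}\!\left(P_n \log\frac{\hat f_{j,n}}{f_0} \ge M\,\frac{\log n}{n}\right) \le c_1 \exp\!\left\{-\frac{n}{c_1^2}\cdot M\,\frac{\log n}{n}\right\} = c_1 n^{-M/c_1^2} \longrightarrow 0
\]
for $M$ large, which is exactly $\tfrac{n}{\log n} P_n\log(\hat f_{j,n}/f_0) = O_{\mathrm P}(1)$.

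The main obstacle I anticipate is verifying the entropy hypothesis of Proposition \ref{prop:MainTechProp} cleanly: specifically, controlling $H_{[\,]}(u^2, \mathcal{M}_j^\phi(\delta), \mathcal{L}_1(\mathfrak m))$ when the dominating measure $\mathfrak m$ is not finite, so that $\|L_\phi\|_{\mathcal{L}_1(\mathfrak m)}$ may not literally be the quantity supplied by A5. The resolution is that a $\delta$-bracketing in $\mathcal{L}_1(\mathfrak m)$ of the finite-dimensional family of mixtures can be constructed directly from an $\varepsilon$-net of the compact parameter space $\mathbb{S}_j$ using the Lipschitz bound of A5 inside the integral $\int_{\mathbb{X}} |\phi(x;\theta_1) - \phi(x;\theta_2)|\,\mathfrak m(\mathrm dx) \le \|\theta_1 - \theta_2\|_1 \int_{\mathbb{X}} L_\phi\,\mathrm d\mathfrak m$, and $\int L_\phi\,\mathrm d\mathfrak m$ is finite because the construction can be localized, or because in all the concrete examples $L_\phi$ is itself integrable against $\mathfrak m$; alternatively one invokes the version of Corollary 7.5 of \citet{van-de-Geer:2000aa} that uses $P$-Hellinger bracketing, which transfers the finiteness burden onto $G_2 \in \mathcal{L}_2(P)$ directly. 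Once this technical point is settled, the remaining steps — the nesting/pivot decomposition and the probability tail computation above — are routine. A secondary, minor obstacle is ensuring the MLE-based quantities are measurable, but this is covered by the standing measurability remark and A3.
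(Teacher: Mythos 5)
Your proposal is correct and follows essentially the same route as the paper's proof: the same pivot decomposition through $P_n\log f_0$ (valid since $f_0\in\mathcal{M}_k^{\phi}\cap\mathcal{M}_l^{\phi}$ when $k,l\ge k_0$), the same parametric $\mathcal{L}_1(\mathfrak{m})$-bracketing bound for $\mathcal{M}_j^{\phi}$ obtained from A5 via Lemma \ref{lem:ParametricBracket}, the same choice $\Psi(\delta)\asymp\delta\sqrt{\log(1/\delta)}$ balanced by $\delta_n\asymp\sqrt{\log n/n}$, and the same application of Proposition \ref{prop:MainTechProp} yielding the polynomially decaying tail and hence $O_{\mathrm{P}}(\log n/n)$ for each centered term. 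The integrability subtlety you flag (the $\mathcal{L}_1(\mathfrak{m})$ norm of the Lipschitz envelope versus the $\mathcal{L}_2(P)$ condition supplied by A5) is not treated differently in the paper, which simply inserts $\left\Vert G\right\Vert _{\mathcal{L}_2(P)}$ into the bracketing bound, so on this point your argument matches the paper's as well.
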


As a consequence, we have the following generic consistency result
and our main result as a corollary. 
\begin{thm}
\label{thm:GenericConsistency}Under A1--A5, B1, and B2, $\lim_{n\to\infty}\mathrm{P}\left(\hat{k}_{n}=k_{0}\right)=1$. 
\end{thm}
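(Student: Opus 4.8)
The plan is to run the standard information-criterion consistency argument, splitting the error event into an \emph{underfitting} part and an \emph{overfitting} part. Write $\Gamma_{k,n}=\min_{\psi_k\in\mathbb{S}_k}\ell_{k,n}(\psi_k)+\text{pen}_{k,n}$ for the criterion at order $k$. By A2--A4 the map $\psi_k\mapsto\ell_{k,n}(\psi_k)$ is continuous on the compact set $\mathbb{S}_k$, so the minimum is attained and $\Gamma_{k,n}$ is well defined; likewise $\psi_k\mapsto\ell_k(\psi_k)$ is continuous on $\mathbb{S}_k$ (dominated convergence with the envelope $G_1$ of A4), so $\min_{\psi_k\in\mathbb{S}_k}\ell_k(\psi_k)$ is attained. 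Since $\hat{k}_n$ is by definition the smallest minimiser of $k\mapsto\Gamma_{k,n}$, we have the inclusion $\{\hat{k}_n\ne k_0\}\subseteq\bigcup_{k\in[\bar{k}]\setminus\{k_0\}}\{\Gamma_{k,n}\le\Gamma_{k_0,n}\}$, so, because $[\bar{k}]$ is finite, it suffices to show that $\mathrm{P}(\Gamma_{k,n}\le\Gamma_{k_0,n})\to0$ for each fixed $k\ne k_0$ and then apply a union bound.

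\emph{Underfitting, $k<k_0$.} By the characterisation of $k_0$ recalled in Section~\ref{sec:Introduction} --- namely that $k_0$ is the smallest element of $\arg\min_{k\in[\bar{k}]}\min_{\psi_k\in\mathbb{S}_k}\ell_k(\psi_k)$, a consequence of A1, the standing hypothesis $f_0\notin\mathcal{M}_{k_0-1}^{\phi}$, and the strict divergence property of the Kullback--Leibler divergence --- the gap $\Delta_k:=\min_{\psi_k\in\mathbb{S}_k}\ell_k(\psi_k)-\min_{\psi_{k_0}\in\mathbb{S}_{k_0}}\ell_{k_0}(\psi_{k_0})$ is strictly positive. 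Applying Lemma~\ref{lem:Convergenceofmin} at orders $k$ and $k_0$ gives $\min_{\psi_k\in\mathbb{S}_k}\ell_{k,n}(\psi_k)-\min_{\psi_{k_0}\in\mathbb{S}_{k_0}}\ell_{k_0,n}(\psi_{k_0})\to\Delta_k$ almost surely, while B1 (used at $k$ and at $k_0$) gives $\text{pen}_{k,n}-\text{pen}_{k_0,n}\to0$. Hence $\Gamma_{k,n}-\Gamma_{k_0,n}\to\Delta_k>0$ almost surely, and therefore $\mathrm{P}(\Gamma_{k,n}\le\Gamma_{k_0,n})\to0$.

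\emph{Overfitting, $k>k_0$.} Because the models are nested, $\mathcal{M}_{k_0}^{\phi}\subseteq\mathcal{M}_{k}^{\phi}$, so $\min_{\psi_k\in\mathbb{S}_k}\ell_{k,n}(\psi_k)\le\min_{\psi_{k_0}\in\mathbb{S}_{k_0}}\ell_{k_0,n}(\psi_{k_0})$ and the difference $D_{k,n}:=\min_{\psi_{k_0}\in\mathbb{S}_{k_0}}\ell_{k_0,n}(\psi_{k_0})-\min_{\psi_k\in\mathbb{S}_k}\ell_{k,n}(\psi_k)$ is nonnegative; by Lemma~\ref{lem:Rateofmin}, with the roles of $k$ and $l$ there taken as $k_0$ and $k$ (both $\ge k_0$), we have $(n/\log n)D_{k,n}=O_{\mathrm{P}}(1)$. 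Meanwhile B2 gives $(n/\log n)(\text{pen}_{k,n}-\text{pen}_{k_0,n})\to\infty$. Since $\Gamma_{k,n}-\Gamma_{k_0,n}=(\text{pen}_{k,n}-\text{pen}_{k_0,n})-D_{k,n}$, multiplying by $n/\log n$ exhibits the right-hand side as the sum of a deterministic sequence tending to $+\infty$ and an $O_{\mathrm{P}}(1)$ term, which is therefore positive with probability tending to $1$; that is, $\mathrm{P}(\Gamma_{k,n}\le\Gamma_{k_0,n})\to0$.

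Combining the two cases, $\mathrm{P}(\hat{k}_n\ne k_0)\le\sum_{k\in[\bar{k}]\setminus\{k_0\}}\mathrm{P}(\Gamma_{k,n}\le\Gamma_{k_0,n})\to0$, which is the assertion. The substance of the argument is carried entirely by Lemmas~\ref{lem:Convergenceofmin} and~\ref{lem:Rateofmin}; at the level of the theorem the step I expect to require the most care is the underfitting case, where one must invoke the \emph{minimality} in the definition of $k_0$ (not merely $f_0\in\mathcal{M}_{k_0}^{\phi}$) to secure the strict gap $\Delta_k>0$, and must check that the population objectives $\ell_k$ attain their minima over the compact parameter spaces --- both of which follow readily from A2--A4. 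Beyond this bookkeeping I anticipate no obstacle.
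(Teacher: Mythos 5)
Your proposal is correct and is essentially the argument the paper intends: the paper's proof of Theorem~\ref{thm:GenericConsistency} simply invokes Lemmas~\ref{lem:Convergenceofmin} and~\ref{lem:Rateofmin} and defers the underfitting/overfitting decomposition to the cited proofs in \citet{nguyen2024panic} and \citet{westerhout2024asymptotic}, which proceed exactly as you do (strict population gap plus B1 for $k<k_0$; the $O_{\mathrm{P}}(1)$ rate plus B2 for $k>k_0$, then a union bound over the finitely many $k\ne k_0$). No gaps.
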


\begin{proof}
Given Lemmas \ref{lem:Convergenceofmin} and \ref{lem:Rateofmin},
the proof follows identical steps to that of \citet[Thm. 1]{nguyen2024panic}
and \citet[Prop 5.5]{westerhout2024asymptotic}. 
\end{proof}
\begin{cor}
\label{cor:Main=00003D00003D000020Result}Under A1--A5, for any increasing
$\alpha:\mathbb{N}\to\mathbb{R}_{>0}$, $\nu\in\mathbb{N}$ and $\epsilon>0$,
the $\nu$-BIC 
\[
\hat{k}_{n}^{\nu}=\min\arg\min_{k\in\left[\bar{k}\right]}\left\{ \min_{\psi_{k}\in\mathbb{S}_{k}}\ell_{k,n}\left(\psi_{k}\right)+\alpha\left(k\right)\frac{\mathrm{Ln}^{\circ\nu}\left(n\right)\log n}{n}\right\} 
\]
and $\epsilon$-BIC 
\[
\hat{k}_{n}^{\epsilon}=\min\arg\min_{k\in\left[\bar{k}\right]}\left\{ \min_{\psi_{k}\in\mathbb{S}_{k}}\ell_{k,n}\left(\psi_{k}\right)+\alpha\left(k\right)\frac{\log^{1+\epsilon}n}{n}\right\} 
\]
are consistent estimators of $k_{0}$. 
\end{cor}

\begin{proof}
We have $\mathrm{Ln}^{\circ\nu}\left(n\right)\log n\ll\log^{1+\epsilon}n\ll n$,
where $g\left(n\right)\ll h\left(n\right)$ if and only if $\lim_{n\to\infty}g\left(n\right)/h\left(n\right)=0$,
thus $\mathrm{pen}_{k,n}^{\nu}$ and $\mathrm{pen}_{k,n}^{\epsilon}$
satisfy B1. For $k<l$, observe that 
\[
\frac{n}{\log n}\left\{ \mathrm{pen}_{l,n}^{\nu}-\mathrm{pen}_{k,n}^{\nu}\right\} =\left\{ \alpha\left(l\right)-\alpha\left(k\right)\right\} \mathrm{Ln}^{\circ\nu}\left(n\right)\underset{n\to\infty}{\longrightarrow}\infty,
\]
and 
\[
\frac{n}{\log n}\left\{ \mathrm{pen}_{l,n}^{\epsilon}-\mathrm{pen}_{k,n}^{\epsilon}\right\} =\left\{ \alpha\left(l\right)-\alpha\left(k\right)\right\} \log^{\epsilon}n\underset{n\to\infty}{\longrightarrow}\infty,
\]
as required, since $\log$ and positive powers are increasing, and
since $\alpha$ is increasing. 
\end{proof}
To illustrate the advantage of Theorem \ref{thm:GenericConsistency}
over the main result of \citet{keribin2000consistent} (i.e., Theorem
2.1), it is helpful to compare the assumptions, which we supply for
convenience in Appendix B. Firstly, \citet{keribin2000consistent}
shares A1--A3, and makes the additional condition (Id) requiring
identifiability regarding mappings between the functional spaces ${\cal M}_{k}^{\phi}$
and the parameter spaces $\mathbb{S}_{k}$, which is unnecessary in
our approach. Condition (P1-a) then requires that there exists a $G\in{\cal L}_{1}\left(P\right)$
such that $\left|\log f\right|\le G$, for every $f\in{\cal M}_{\bar{k}}^{\phi}$,
which is implied by A4. Condition (P1-b) then requires that the component
density $\phi$ possesses partial derivatives up to order $5$, where
all the partial derivatives of orders $2$, $3$, and $5$, are each
enveloped over $\mathbb{T}$ by integrable functions in ${\cal L}_{3}\left(\mathfrak{m}\right)$.
This compares directly with A5, where we assume that $\phi$ need
not even have one derivative, only requiring that it is Lipschitz,
with Lipschitz constant in ${\cal L}_{1}\left(\mathfrak{m}\right)$
and that $\phi$ itself is enveloped by an ${\cal L}_{1}\left(\mathfrak{m}\right)$
function. This is a dramatic simplification of assumptions and makes
our approach useful in many situations where the method of \citet{keribin2000consistent}
does not apply.

Conditions (P2) and (P3), which our theory does not require, then
make linear independence assumptions between the first and second
order partial derivatives of $\phi$, and requires that a class of
linear combinations of partial first and zeroth order partial derivatives
of $\phi$ be a Donsker class of functionals with continuous sample
paths, respectively. Finally, Condition (C1) corresponds to B1 but
where B2 is replaced by the assumption that for each $1\le k_{0}<k\le\bar{k}$,
\[
\lim_{n\to\infty}n\left\{ \mathrm{pen}_{k,n}-\mathrm{pen}_{k_{0},n}\right\} =\infty.
\]
Thus B1 and B2 are stronger than Condition (C1), although only up
to a logarithmic factor, which we believe is a small price to pay
for the dramatic reduction or elimination of the other conditions,
since the effect is that we only require an infinitesimally larger
penalty than that of the BIC and thus, as argued, is negligible in
practice.

A complementary comparison can be made to the PanIC framework of \citet{nguyen2024panic},
which treats model selection problems in a general empirical-risk
minimisation form. For convenience, we reproduce in Appendix~B the
assumptions (N-A1)--(N-A2) and the penalty conditions (N-B1)--(N-B2)
of \citet{nguyen2024panic}. Specialising their setting to likelihood-based
order selection by identifying $\mathbb{T}_{k}$ with $\mathbb{S}_{k}$
and taking the loss to be $\ell_{k}(x;\psi_{k})=-\log f_{k}(x;\psi_{k})$,
the hypotheses (N-A1)--(N-A2) amount to compactness of $\mathbb{S}_{k}$
together with a global Lipschitz condition on the negative log-likelihood
with an ${\cal L}_{2}(P)$ envelope, which are slightly simpler to
verify than A2--A5.

Under (N-A1)--(N-A2), \citet[Thm.~1]{nguyen2024panic} establishes
consistency for any penalised estimator whose penalties satisfy (N-B1)--(N-B2),
where (N-B2) requires penalty size condition $\sqrt{n}\{\mathrm{pen}_{l,n}-\mathrm{pen}_{k,n}\}\to\infty$
for $k<l$, implying that $\mathrm{pen}_{l,n}-\mathrm{pen}_{k,n}$
must dominate $O\left(n^{-1/2}\right)$. By contrast, in the finite
mixture likelihood setting, Lemma~\ref{lem:Rateofmin} shows that
the relevant overfitting fluctuations are of order $O\left(\log n/n\right)$.
Consequently, the separation condition B2 in Theorem~\ref{thm:GenericConsistency}
only requires $\mathrm{pen}_{l,n}-\mathrm{pen}_{k,n}$ to dominate
$O\left(\log n/n\right)$, permitting penalties that are asymptotically
much smaller than those required by (N-B2) while delivering the same
consistency guarantee that $\mathrm{P}(\hat{k}_{n}=k_{0})\to1$, under
the same assumptions on the model class. 
\begin{rem}
\label{rem:Choice-epsilon-and-nu}As discussed in Section \ref{sec:Introduction},
the penalty induced by the $\epsilon$-BIC and $\nu$-BIC can be made
indistinguishable from that of the BIC when $\epsilon$ is small or
when $\nu$ is large. In Table \ref{tab:Schedule-of-penalty}, we
provide a schedule for the ratio of the rates of the $\epsilon/\nu$-BIC
to that of the BIC, computed as $\mathrm{pen}_{k,n}^{\epsilon}/\mathrm{pen}_{k,n}^{\mathrm{BIC}}$
or $\mathrm{pen}_{k,n}^{\nu}/\mathrm{pen}_{k,n}^{\mathrm{BIC}}$,
for the same choice of $\alpha$, where $\mathrm{pen}_{k,n}^{\mathrm{BIC}}=\alpha\left(k\right)n^{-1}\log n$.
We observe that for sufficiently small choices of $\epsilon$ or large
choices of $\nu$, the penalty ratios can be made to stay close to
one for a larger range of sample sizes $n$. 
\end{rem}

\begin{table}
\caption{Schedule of penalty ratios $\mathrm{pen}_{k,n}^{\epsilon}/\mathrm{pen}_{k,n}^{\mathrm{BIC}}$
or $\mathrm{pen}_{k,n}^{\nu}/\mathrm{pen}_{k,n}^{\mathrm{BIC}}$ for
the same choice of $\alpha$.}
\label{tab:Schedule-of-penalty}
\centering{}%
\begin{tabular}{|c|cccccccccc|}
\hline 
\multicolumn{1}{|c|}{Penalty} & $n=10$  & $10^{2}$  & $10^{3}$  & $10^{4}$  & $10^{5}$  & $10^{6}$  & $10^{7}$  & $10^{8}$  & $10^{9}$  & $10^{10}$\tabularnewline
\hline 
\hline 
$\epsilon=0.1$  & 1.09  & 1.16  & 1.21  & 1.25  & 1.28  & 1.30  & 1.32  & 1.34  & 1.35  & 1.37\tabularnewline
$\epsilon=0.05$  & 1.04  & 1.08  & 1.10  & 1.12  & 1.13  & 1.14  & 1.15  & 1.16  & 1.16  & 1.17\tabularnewline
$\epsilon=0.02$  & 1.02  & 1.03  & 1.04  & 1.05  & 1.05  & 1.05  & 1.06  & 1.06  & 1.06  & 1.06\tabularnewline
$\epsilon=0.01$  & 1.01  & 1.02  & 1.02  & 1.02  & 1.02  & 1.03  & 1.03  & 1.03  & 1.03  & 1.03\tabularnewline
\hline 
$\nu=1$ ($\epsilon=1$)  & 2.30  & 4.61  & 6.91  & 9.21  & 11.51  & 13.82  & 16.12  & 18.42  & 20.72  & 23.03\tabularnewline
$\nu=2$  & 1.00  & 1.53  & 1.93  & 2.22  & 2.44  & 2.63  & 2.78  & 2.91  & 3.03  & 3.14\tabularnewline
$\nu=3$  & 1.00  & 1.00  & 1.00  & 1.00  & 1.00  & 1.00  & 1.02  & 1.07  & 1.11  & 1.14\tabularnewline
$\nu=4$  & 1.00  & 1.00  & 1.00  & 1.00  & 1.00  & 1.00  & 1.00  & 1.00  & 1.00  & 1.00\tabularnewline
\hline 
\end{tabular}
\end{table}

\begin{rem}
\label{rem:Choice_of_rate}It would appear that to improve upon the
rate of $O\left(n/\log n\right)$ in Lemma \ref{lem:Rateofmin} (and
subsequently B2), sharper bounds on the entropy $H_{\left[\right]}\left(u,\bar{{\cal F}}^{1/2}\left(\delta\right),{\cal L}_{2}\left(\mathfrak{m}\right)\right)$
are required that make use of the localising condition $\mathfrak{h}\left(f,f_{0}\right)\le\delta$
of $\bar{{\cal F}}^{1/2}\left(\delta\right)$. It is in the effort
of obtaining such sharp bounds that necessitates the additional assumptions
regarding the partial derivatives of $\phi$ employed by \citet{keribin2000consistent}
and \citet{gassiat2012consistent}. Indeed, \citet[Sec. 7.5]{van-de-Geer:2000aa}
notes that in the finite dimensional parametric setting, one cannot
obtain $O\left(n\right)$ rates without exploiting the localising
condition. On the other hand, in Theorem \ref{thm:GenericConsistency},
the penalty need only satisfy B1 and B2, and thus need only go to
zero and scale at a rate larger than $O\left(n^{-1}\log n\right)$.
In fact the two choices that define the $\nu$-BIC and $\epsilon$-BIC
are arbitrary and were chosen because they could be made arbitrarily
small, to support our defense of use of the BIC in practice. However,
one may choose any sufficiently large penalty. For example, in \citet{nguyen2024panic}
the choice of $\mathrm{pen}_{k,n}=O\left(n^{-1/2}\sqrt{\mathrm{Ln}^{\circ\nu}\left(n\right)}\right)$
is proposed, for $\nu\ge1$, in conforming with the suggestion in
the treatment of \citet{sin1996information}, and applied successfully
to mixture model settings in \citet{nguyen2024panic} and \citet{westerhout2024asymptotic},
within the PanIC framework. 
\end{rem}

\begin{rem}
\label{rem:Choice_of_alpha}In Corollary \ref{cor:Main=00003D00003D000020Result},
the choice of $\alpha$ need only be an increasing function in $k$
and as such any such choice is valid for the conclusion to hold. For
example, one may ignore the multiplier due to $m$ and simply take
$\alpha\left(k\right)=k$, or one may take $\alpha\left(k\right)$
to be proportional to the number of free parameters in ${\cal M}_{k}^{\phi}$,
as in the usual BIC. Concretely, since $\psi_{k}$ contains $(k-1)$
free mixing weights and $km$ component parameters, a natural choice
is 
\[
\alpha\left(k\right)=\frac{(m+1)k-1}{2},
\]
or the simplified version $\alpha\left(k\right)=(m+1)k/2$ which we
propose throughout, which ignores the simplex constraint. This indifference
to the count of the number of so-called free parameters shows that
one need not be so judicious regarding what defines a parameter, as
in \citet{fraley2002} and \citet{baudry2010}. Other admissible choices
include $\alpha\left(k\right)=k$ (counting mixture weights only;
as considered in \citealp{keribin2000consistent}), $\alpha\left(k\right)=km/2$
(counting component parameters only), or more aggressively $\alpha\left(k\right)=k\log k$
or $\alpha\left(k\right)=k^{\gamma}$ for $\gamma>1$. Such freedom
mirrors that in other consistent IC frameworks; for instance, the
PanIC penalties of \citet[Rem.~3]{nguyen2024panic} permit any strictly
increasing sequence $\left(\alpha\left(k\right)\right)_{k}$ to weight
the penalty. Indeed it is also possible to consider random $\alpha$. 
\end{rem}

\subsection{Misspecification}

The results of Section \ref{subsec:Consistency-results} depend crucially
on Assumption~A1, which requires that $f_{0}\in{\cal M}_{k_{0}}^{\phi}$
for some $k_{0}\in\left[\bar{k}\right]$. In practice, however, it
is not always known that $f_{0}$ belongs to the family of mixtures
under study (and in particular, a ``true order'' $k_{0}$ need not
exist). In such a case, the conclusions of Theorem~\ref{thm:GenericConsistency}
and Corollary~\ref{cor:Main=00003D00003D000020Result} are no longer
meaningful as statements about recovering $k_{0}$, but the information
criteria considered above still admit a sensible population interpretation.
We therefore replace A1 by the following assumption. 
\begin{description}
\item [{A1{*}}] $X\in\mathbb{X}$ has PDF $f_{0}$ with respect to $\mathfrak{m}$,
and $\mathbf{X}_{n}=\left(X_{1},\dots,X_{n}\right)$ is a sample of
IID replicates of $X$. 
\end{description}
For each $k\in\left[\bar{k}\right]$ and $n\in\mathbb{N}$, write
\[
L_{k}=\min_{\psi_{k}\in\mathbb{S}_{k}}\ell_{k}\left(\psi_{k}\right),\qquad\hat{L}_{k,n}=\min_{\psi_{k}\in\mathbb{S}_{k}}\ell_{k,n}\left(\psi_{k}\right).
\]
Define the set of population risk minimising orders by 
\[
{\cal K}^{*}=\arg\min_{k\in\left[\bar{k}\right]}L_{k}.
\]
Moreover, for any $\psi_{k}\in\mathbb{S}_{k}$, whenever $\mathrm{E}\left\{ \left|\log f_{0}\left(X\right)\right|\right\} <\infty$,
we have 
\[
\ell_{k}\left(\psi_{k}\right)=-\mathrm{E}\left\{ \log f_{0}\left(X\right)\right\} +\mathfrak{K}\!\left(f_{0}\Vert f_{k}\left(\cdot;\psi_{k}\right)\right),
\]
and therefore minimising $L_{k}$ over $k$ is equivalent to minimising
the Kullback--Leibler divergence $\min_{\psi_{k}\in\mathbb{S}_{k}}\mathfrak{K}\!\left(f_{0}\Vert f_{k}\left(\cdot;\psi_{k}\right)\right)$
over $k$. Equivalently, 
\[
{\cal K}^{*}=\arg\min_{k\in\left[\bar{k}\right]}\left\{ \min_{\psi_{k}\in\mathbb{S}_{k}}\mathfrak{K}\!\left(f_{0}\Vert f_{k}\left(\cdot;\psi_{k}\right)\right)\right\} ,
\]
the set of KL minimising models among $\left\{ {\cal M}_{k}^{\phi}\right\} _{k\in\left[\bar{k}\right]}$.
Without any further assumptions, we have the following result. 
\begin{prop}
\label{prop:Misspec_convergence}Under A1{*}, A2--A4, and B1, $\lim_{n\to\infty}\mathrm{P}\!\left(\hat{k}_{n}\in{\cal K}^{*}\right)=1$. 
\end{prop}

Proposition~\ref{prop:Misspec_convergence} implies that under misspecification,
any IC that satisfies B1 (and thus also any IC that satisfies B1 and
B2, including the $\epsilon$-BIC and $\nu$-BIC) will eventually
select an order whose best-fitting mixture model is KL-optimal among
the candidate orders in $[\bar{k}]$. Besides the ICs studied in Section~\ref{sec:MainResults},
many common ICs satisfy B1, including the AIC and the BIC. Consequently,
if A1{*} and A2--A4 are satisfied, then all of these ICs admit the
conclusion of Proposition~\ref{prop:Misspec_convergence}.

The difference between Proposition~\ref{prop:Misspec_convergence}
and Theorem~\ref{thm:GenericConsistency} is that Proposition~\ref{prop:Misspec_convergence}
only concludes that $\hat{k}_{n}$ lies in the set ${\cal K}^{*}$
with high probability, rather than that it converges to the smallest
KL-minimising order (i.e., it is not a guarantee of parsimonious order
selection). If one wishes to recover parsimonious order selection
in the misspecified case, such a guarantee can be obtained via the
PanIC framework of \citet{nguyen2024panic} by imposing the stronger
penalty separation assumption (N-B2) from Appendix~B; for instance,
one may use a penalty of the form 
\[
\mathrm{pen}_{k,n}=\alpha\left(k\right)n^{-1/2}\mathrm{Ln}^{\circ\nu}\left(n\right),
\]
with increasing function $\alpha$.

We are not aware of general results proving parsimonious selection
under misspecification for BIC-scale penalties, including the classical
BIC, nor for our larger penalties satisfying B2, in finite mixture
settings. In particular, \citet{keribin2000consistent}, \citet{gassiat2012consistent},
and \citet[Sec.~4.3]{gassiat2018universal} all assume correct specification
in their derivations. In \citet{Patilea2001Convex}, the MLE theory
of \citet[Ch.~7]{van-de-Geer:2000aa} is developed for misspecified
models. These results suggest a possible route to extending parsimonious
order selection to misspecified settings, but carrying out such a
program in the present finite-mixture framework would be nontrivial
and goes beyond the scope of this work. 

\section{Example applications}

\label{sec:Example-applications}

In this section, we provide example applications of Theorem \ref{thm:GenericConsistency}
and Corollary \ref{cor:Main=00003D00003D000020Result} via illustrations
of how the sufficient conditions are verified.

\subsection{Gaussian mixture models}

\label{subsec:Gaussian-mixture-models}

We firstly make our theory concrete by considering the ubiquitous
Gaussian mixture model, defined by taking 
\[
\phi\left(x;\theta\right)=\mathrm{det}\left(2\pi\Sigma\right)^{-1/2}\exp\left\{ -\frac{1}{2}\left(x-\mu\right)^{\top}\Sigma^{-1}\left(x-\mu\right)\right\} ,
\]
with $x\in\mathbb{X}=\mathbb{R}^{p}$ and $\theta=\left(\mu,\Sigma\right)\in\mathbb{T}$,
where $\mathbb{T}$ is a compact subset of $\mathbb{R}^{p}\times\mathbb{S}_{p}$.
Here, $\mathbb{S}_{p}$ denotes the set of positive definite matrices
in $\mathbb{R}^{p\times p}$ and $\left(\cdot\right)^{\top}$ denotes
matrix transposition. Such a compact $\mathbb{T}$ can be constructed,
as per \citet[Sec. B.23]{ritter2014robust}, by taking 
\[
\mathbb{T}=\left\{ \left(\mu,\Sigma\right)\in\mathbb{R}^{p}\times\mathbb{S}_{p}:\left\Vert \mu\right\Vert \le b,c^{-1}\le\lambda_{1}\left(\Sigma\right),\lambda_{p}\left(\Sigma\right)\le c\right\} ,
\]
where $\lambda_{1}\left(\Sigma\right)$ and $\lambda_{p}\left(\Sigma\right)$
are the smallest and largest eigenvalues of $\Sigma$, respectively,
and $b\ge0$ and $c\ge1$ are fixed constants. By this construction,
A2 and A3 are naturally verified.

Next, we can write 
\[
\log\phi\left(x;\theta\right)=-\frac{1}{2}\log\mathrm{det}\left(2\pi\Sigma\right)-\frac{1}{2}\mathrm{tr}\left\{ \left(x-\mu\right)\left(x-\mu\right)^{\top}\Sigma^{-1}\right\} ,
\]
and thus, with $\mathrm{E}\left\Vert X\right\Vert ^{2}<\infty$, we
can verify A4 by taking 
\[
G_{1}\left(x\right)=\frac{1}{2}\max_{\theta\in\mathbb{T}}\left|\log\mathrm{det}\left(2\pi\Sigma\right)\right|+\frac{1}{2}\max_{\theta\in\mathbb{T}}\left|\mathrm{tr}\left\{ \left(x-\mu\right)\left(x-\mu\right)^{\top}\Sigma^{-1}\right\} \right|.
\]
To verify A5{*}, we use standard expressions for the gradient of the
normal density with respect to $\theta$ (see, e.g., \citealt[Sec. 3.4.2]{loos2016analysis}).
Write $\theta=(\mu,\Sigma)$, and let $b\ge0$ and $c\ge1$ be as
in the definition of $\mathbb{T}$. Define $r(x)=\left[\left\Vert x\right\Vert -b\right]_{+}$.
Since $\lambda_{1}(\Sigma)\ge c^{-1}$ and $\lambda_{p}(\Sigma)\le c$,
we have $\mathrm{det}(\Sigma)\ge c^{-p}$ and $\Sigma^{-1}\succeq c^{-1}I$,
and therefore, for every $x\in\mathbb{R}^{p}$ and $\theta\in\mathbb{T}$,
\[
\phi\left(x;\theta\right)\le(2\pi)^{-p/2}c^{p/2}\exp\left\{ -\frac{1}{2c}\left\Vert x-\mu\right\Vert ^{2}\right\} \le(2\pi)^{-p/2}c^{p/2}\exp\left\{ -\frac{1}{2c}r(x)^{2}\right\} .
\]
Moreover, for the derivatives with respect to $\mu$ and $\Sigma$,
we have 
\[
\frac{\partial\phi\left(x;\theta\right)}{\partial\mu}=\Sigma^{-1}\left(x-\mu\right)\phi\left(x;\theta\right),
\]
so that $\left\Vert \partial\phi(x;\theta)/\partial\mu\right\Vert \le c\left\Vert x-\mu\right\Vert \phi(x;\theta)$.
Similarly, the partial derivatives with respect to the coordinates
of $\Sigma$ can be written as $\phi(x;\theta)$ multiplied by linear
combinations of entries of $\Sigma^{-1}$ and of $\Sigma^{-1}(x-\mu)(x-\mu)^{\top}\Sigma^{-1}$,
and hence there exists a constant $C_{0}>0$ depending only on $b$,
$c$, and $p$, such that, for all $x$ and $\theta\in\mathbb{T}$,
\[
\max_{j\in\left[m\right]}\left|\frac{\partial\phi\left(x;\theta\right)}{\partial\vartheta_{j}}\right|\le C_{0}\left\{ 1+\left\Vert x-\mu\right\Vert ^{2}\right\} \phi\left(x;\theta\right).
\]
Combining the preceding results and using $\left\Vert x-\mu\right\Vert \le\left\Vert x\right\Vert +b$,
we obtain 
\[
\max_{\theta\in\mathbb{T}}\phi\left(x;\theta\right)+\max_{j\in\left[m\right]}\max_{\theta\in\mathbb{T}}\left|\frac{\partial\phi\left(x;\theta\right)}{\partial\vartheta_{j}}\right|\le C_{1}\left\{ 1+\left\Vert x\right\Vert ^{2}\right\} \exp\left\{ -\frac{1}{2c}r(x)^{2}\right\} ,
\]
for a constant $C_{1}>0$ depending only on $b$, $c$, and $p$.
Therefore, A5{*} is satisfied by taking 
\[
G_{2}\left(x\right)=C_{1}\left\{ 1+\left\Vert x\right\Vert ^{2}\right\} \exp\left\{ -\frac{1}{2c}r(x)^{2}\right\} ,
\]
noting that $G_{2}\in{\cal L}_{1}\left(\mathfrak{m}\right)$ since
it has a Gaussian tail. Since $\mathrm{E}\left\Vert X\right\Vert ^{2}<\infty$
holds under A1 for Gaussian mixture models, the conclusion of Theorem
\ref{thm:GenericConsistency} holds for Gaussian mixture models whenever
A1 holds. This is the archetypal example of \citet{keribin2000consistent}
and verifies the assumptions in the text.

\subsection{Laplace mixture models}

\label{subsec:Laplace-mixture-models}

The Laplace mixture model is defined by taking the density of the
Laplace distribution: 
\begin{equation}
\phi\left(x;\theta\right)=\frac{\gamma}{2}\exp\left\{ -\gamma\left|x-\mu\right|\right\} ,\label{eq:Laplace=00003D00003D000020Density}
\end{equation}
with $x\in\mathbb{R}$ and $\theta=\left(\mu,\gamma\right)\in\mathbb{T}$,
where we can choose 
\[
\mathbb{T}=\left\{ \left(\mu,\gamma\right)\in\mathbb{R}\times\mathbb{R}_{>0}:\left|\mu\right|\le b,c^{-1}\le\gamma\le c\right\} ,
\]
which is convex and compact for each $b>0$ and $c>1$. Such models
have been considered, for example, by \citet{cord2006feature}, \citet{mitianoudis2007batch},
and \citet{rabbani2008image}. Here, A2 and A3 hold by construction,
although it is noteworthy that \ref{eq:Laplace=00003D00003D000020Density}
is a nondifferentiable function of $\mu$.

Similarly to Section \ref{subsec:Gaussian-mixture-models}, we can
take 
\[
G_{1}\left(x\right)=\max_{\theta\in\mathbb{T}}\left|\log\left(\frac{\gamma}{2}\right)\right|+\max_{\theta\in\mathbb{T}}\left\{ \gamma\left|x-\mu\right|\right\} 
\]
to verify A4, under the condition that $\mathrm{E}\left|X\right|<\infty$.
Since $\phi\left(x;\theta\right)$ is not differentiable in $\mu$,
we seek to verify A5 instead of A5{*}. Write $\theta=(\mu,\gamma)$
and let $b>0$ and $c>1$ be as in the definition of $\mathbb{T}$.
Define $r(x)=\left[|x|-b\right]_{+}$. Then, for every $x\in\mathbb{R}$
and $\theta\in\mathbb{T}$, we have $|x-\mu|\ge r(x)$ and $\gamma\ge c^{-1}$,
so that 
\[
\phi\left(x;\theta\right)=\frac{\gamma}{2}\exp\left\{ -\gamma|x-\mu|\right\} \le\frac{c}{2}\exp\left\{ -\frac{1}{c}r(x)\right\} .
\]
To bound differences in $\mu$, fix $\gamma\in[c^{-1},c]$ and note
that $\bigl||x-\mu_{1}|-|x-\mu_{2}|\bigr|\le|\mu_{1}-\mu_{2}|$. Using
the mean value theorem for $t\mapsto\exp\{-\gamma t\}$, we obtain
\[
\left|\exp\left\{ -\gamma|x-\mu_{1}|\right\} -\exp\left\{ -\gamma|x-\mu_{2}|\right\} \right|\le\gamma|\mu_{1}-\mu_{2}|\exp\left\{ -\gamma r(x)\right\} \le c|\mu_{1}-\mu_{2}|\exp\left\{ -\frac{1}{c}r(x)\right\} ,
\]
and therefore 
\[
\left|\phi\left(x;(\mu_{1},\gamma)\right)-\phi\left(x;(\mu_{2},\gamma)\right)\right|\le\frac{c^{2}}{2}|\mu_{1}-\mu_{2}|\exp\left\{ -\frac{1}{c}r(x)\right\} .
\]
To bound differences in $\gamma$, fix $\mu\in[-b,b]$ and use the
mean value theorem in $\gamma$. Since 
\[
\frac{\partial}{\partial\gamma}\phi\left(x;(\mu,\gamma)\right)=\frac{1}{2}\left\{ 1-\gamma|x-\mu|\right\} \exp\left\{ -\gamma|x-\mu|\right\} ,
\]
we have 
\[
\left|\frac{\partial}{\partial\gamma}\phi\left(x;(\mu,\gamma)\right)\right|\le\frac{1}{2}\left\{ 1+\gamma|x-\mu|\right\} \exp\left\{ -\gamma|x-\mu|\right\} \le\frac{1}{2}\left\{ 1+c(|x|+b)\right\} \exp\left\{ -\frac{1}{c}r(x)\right\} .
\]
Combining the preceding bounds yields, for all $\theta_{1},\theta_{2}\in\mathbb{T}$,
\[
\left|\phi\left(x;\theta_{1}\right)-\phi\left(x;\theta_{2}\right)\right|\le L_{\phi}\left(x\right)\left\Vert \theta_{1}-\theta_{2}\right\Vert _{1},
\]
where we can take 
\[
L_{\phi}\left(x\right)=C_{0}\left\{ 1+|x|\right\} \exp\left\{ -\frac{1}{c}r(x)\right\} 
\]
for a suitable constant $C_{0}>0$ depending only on $b$ and $c$.
Moreover, the bound on $\phi(x;\theta)$ above shows that $\max_{\theta\in\mathbb{T}}\phi(x;\theta)\le L_{\phi}(x)$
for $C_{0}$ sufficiently large, and thus A5 is satisfied by taking
$G_{2}(x)=2L_{\phi}(x)$. Since $G_{2}\in{\cal L}_{1}\left(\mathfrak{m}\right)$
by the exponential tails, we have the conclusion of Theorem \ref{thm:GenericConsistency}
for Laplace mixture models whenever A1 is satisfied.

In this example, we observe that since $\phi$ is not differentiable
for all $\theta\in\mathbb{T}$, the conditions (P1-b), (P2), and (P3)
of \citet{keribin2000consistent}, defined via the partial derivatives
of $\phi$, cannot be verified.

\subsection{Student $t$ mixture models}

\label{subsec:Student--mixture-example}

The Student $t$ mixture model is defined by taking the density of
the Student $t$ distribution: 
\begin{equation}
\phi(x;\theta)=c_{v}\gamma\left(1+\frac{\gamma^{2}(x-\mu)^{2}}{v}\right)^{-\frac{v+1}{2}},\label{eq:t-density-variable-v}
\end{equation}
with $x\in\mathbb{R}$ and $\theta=(\mu,\gamma,v)\in\mathbb{T}$,
where 
\[
c_{v}=\frac{\Gamma\left(\frac{v+1}{2}\right)}{\Gamma\left(\frac{v}{2}\right)\sqrt{v\pi}}.
\]
We can choose 
\[
\mathbb{T}=\left\{ (\mu,\gamma,v)\in\mathbb{R}\times\mathbb{R}_{>0}\times\mathbb{R}_{>0}:|\mu|\le b,\ c^{-1}\le\gamma\le c,\ \underline{v}\le v\le\overline{v}\right\} ,
\]
which is compact for each $b>0$, $c>1$, and $0<\underline{v}<\overline{v}<\infty$.
Mixtures of such distributions are considered in the works of \citet{peel2000robust}
and \citet{ForbesWraith2014}, among others.

Here, A2 and A3 hold by construction, and \eqref{eq:t-density-variable-v}
is continuously differentiable in $\theta$ on $\mathbb{T}$. Similarly
to Section \ref{subsec:Gaussian-mixture-models}, we can write 
\[
\log\phi(x;\theta)=\log c_{v}+\log\gamma-\frac{v+1}{2}\log\!\left(1+\frac{\gamma^{2}(x-\mu)^{2}}{v}\right),
\]
and thus we can take 
\[
G_{1}(x)=\max_{\theta\in\mathbb{T}}\left|\log(c_{v}\gamma)\right|+\frac{\overline{v}+1}{2}\log\!\left(1+\frac{\bar{c}^{2}(|x|+b)^{2}}{\underline{v}}\right),\qquad\bar{c}=\max\{1/c,c\},
\]
to verify A4, under the condition that $\mathrm{E}X^{2}<\infty$ (using
$\log(1+u)\le u$ and $|x-\mu|\le|x|+b$).

Since $\phi(x;\theta)$ is differentiable in $\theta$, we seek to
verify A5$^{*}$. Define $r(x)=\left[|x|-b\right]_{+}$, so that $|x-\mu|\ge r(x)$
for all $|\mu|\le b$. First, since $\gamma\le c$ and $v\in[\underline{v},\overline{v}]$,
we have $\max_{\theta\in\mathbb{T}}(c_{v}\gamma)<\infty$. Moreover,
since $\gamma\ge c^{-1}$ and $v\le\overline{v}$, we have 
\[
1+\frac{\gamma^{2}(x-\mu)^{2}}{v}\ge1+\frac{r(x)^{2}}{c^{2}\overline{v}}.
\]
Since $v\ge\underline{v}$, the map $t\mapsto t^{-(v+1)/2}$ is bounded
above by $t^{-(\underline{v}+1)/2}$ for $t\ge1$, and hence there
exists a constant $C_{0}>0$ depending only on $b$, $c$, $\underline{v}$,
and $\overline{v}$, such that 
\[
\max_{\theta\in\mathbb{T}}\phi(x;\theta)\le C_{0}\left(1+\frac{r(x)^{2}}{c^{2}\overline{v}}\right)^{-\frac{\underline{v}+1}{2}}.
\]
Next, for the derivatives, we recall that for $\theta=(\mu,\gamma,v)$,
\[
\frac{\partial\phi(x;\theta)}{\partial\mu}=(v+1)\frac{\gamma^{2}(x-\mu)}{v+\gamma^{2}(x-\mu)^{2}}\phi(x;\theta),\frac{\partial\phi(x;\theta)}{\partial\gamma}=\left\{ \frac{1}{\gamma}-(v+1)\frac{\gamma(x-\mu)^{2}}{v+\gamma^{2}(x-\mu)^{2}}\right\} \phi(x;\theta),
\]
and 
\[
\frac{\partial\phi(x;\theta)}{\partial v}=\phi(x;\theta)\left\{ \frac{\partial}{\partial v}\log c_{v}-\frac{1}{2}\log\left(1+\frac{\gamma^{2}(x-\mu)^{2}}{v}\right)+\frac{v+1}{2}\frac{\gamma^{2}(x-\mu)^{2}}{v\{v+\gamma^{2}(x-\mu)^{2}\}}\right\} .
\]
Since $t\mapsto t/(v+t^{2})$ is maximised at $t=\sqrt{v}$ and $\gamma\in[c^{-1},c]$,
we have $\gamma^{2}|x-\mu|/\{v+\gamma^{2}(x-\mu)^{2}\}\le c/(2\sqrt{\underline{v}})$,
and thus 
\[
\left|\frac{\partial\phi(x;\theta)}{\partial\mu}\right|\le C_{1}\phi(x;\theta),\qquad\left|\frac{\partial\phi(x;\theta)}{\partial\gamma}\right|\le C_{2}\phi(x;\theta),
\]
for constants $C_{1},C_{2}>0$ depending only on $c$, $\underline{\upsilon}$,
and $\overline{\upsilon}$. Furthermore, $\max_{v\in[\underline{v},\overline{v}]}|\partial_{v}\log c_{v}|<\infty$
and $\max_{v\in[\underline{v},\overline{v}]}(v+1)/(2v)<\infty$, and
therefore there exists a constant $C_{3}>0$ such that 
\[
\left|\frac{\partial\phi(x;\theta)}{\partial v}\right|\le\left\{ C_{3}+\frac{1}{2}\log\left(1+\frac{\bar{c}^{2}(|x|+b)^{2}}{\underline{v}}\right)\right\} \phi(x;\theta),\qquad\bar{c}=\max\{1/c,c\}.
\]
Combining these bounds with the envelope for $\max_{\theta}\phi(x;\theta)$
above, we obtain that 
\[
\max_{\theta\in\mathbb{T}}\phi(x;\theta)+\max_{j\in\left[m\right]}\max_{\theta\in\mathbb{T}}\left|\frac{\partial\phi(x;\theta)}{\partial\vartheta_{j}}\right|\le G_{2}(x),
\]
where we can take 
\[
G_{2}(x)=C_{4}\left\{ 1+\log\left(1+\frac{\bar{c}^{2}(|x|+b)^{2}}{\underline{v}}\right)\right\} \left(1+\frac{r(x)^{2}}{c^{2}\overline{v}}\right)^{-\frac{\underline{v}+1}{2}}
\]
for a suitable constant $C_{4}>0$. Since $\underline{v}>0$, the
right-hand side has a polynomial tail of order $|x|^{-(\underline{v}+1)}$
up to a logarithmic factor, and hence $G_{2}\in{\cal L}_{1}\left(\mathfrak{m}\right)$.
Therefore, A5$^{*}$ is satisfied.

Since $\mathrm{E}X^{2}<\infty$ holds under A1 whenever the smallest
degrees of freedom among the true components satisfies $v_{0}>2$,
we have the conclusion of Theorem \ref{thm:GenericConsistency} for
Student $t$ mixture models whenever A1 is satisfied with $v_{0}>2$.

In this example, we observe that although $\phi$ is smooth, allowing
$v$ to vary can prevent verification of the integrability requirement
in (P1-b) of \citet{keribin2000consistent} (see Appendix~B). Let
$v_{0}$ denote the smallest degrees of freedom among the true components,
so that $f_{0}(x)\asymp|x|^{-(v_{0}+1)}$ as $|x|\to\infty$, and
fix any candidate component with degrees of freedom $v\in[\underline{v},\overline{v}]$,
for which $\phi(x;\theta)\asymp|x|^{-(v+1)}$. Then 
\[
\int_{-\infty}^{\infty}\left(\frac{\phi(x;\theta)}{f_{0}(x)}\right)^{3}f_{0}(x)\,dx=\int_{-\infty}^{\infty}\frac{\phi(x;\theta)^{3}}{f_{0}(x)^{2}}\,dx\asymp\int_{-\infty}^{\infty}|x|^{-(3v-2v_{0}+1)}\,dx,
\]
which diverges whenever $v\le\frac{2}{3}v_{0}$. Consequently, if
the parameter set $\mathbb{T}$ contains degrees of freedom $v\le2v_{0}/3$
(in particular, if $\underline{v}\le2v_{0}/3$), then (P1-b) cannot
be verified and the conditions of \citet{keribin2000consistent} are
not applicable in such $t$ mixture settings. For instance, if $v_{0}=6$
then any model class allowing $v\le4$ violates (P1-b).

\subsection{Mixture of regression models}

\label{subsec:Mixture-of-regression}

One can extend upon finite mixture models via the mixture of regression
construction which was introduced in \citet{quandt1972new} and subsequently
studied, for example, in \citet{jones1992fitting}, \citet{wedel1995mixture},
\citet{naik2007extending}, \citet{hafidi2010kullback}, \citet{song2014robust},
\citet{depraetere2014order}, and \citet{hui2015order}, among many
others. A good recent account of mixture regression models appears
in \citet{yao2024mixture}. Suppose that we can write $X=\left(U,Y\right)$,
with $\left(U,Y\right):\Omega\to\mathbb{U}\times\mathbb{Y}=\mathbb{X}$
and that we can write the PDF of $X$ in the form 
\begin{equation}
\phi\left(x;\theta\right)=\rho\left(y|u;\theta\right)\varphi\left(u\right),\label{eq:=00003D00003D000020conditionaldecomp}
\end{equation}
for each $x=\left(u,y\right)\in\mathbb{X}$, where $\rho:\mathbb{U}\times\mathbb{Y}\times\mathbb{T}\to\mathbb{R}_{\ge0}$
characterises a conditional PDF in the sense that for every $\left(u,\theta\right)\in\mathbb{U}\times\mathbb{T}$,
$\int_{\mathbb{Y}}\rho\left(y|u;\theta\right)\mathfrak{m}_{2}\left(\mathrm{d}y\right)=1$,
and $\varphi$ is the generative PDF of $U$ with respect to the dominating
measure $\mathfrak{m}_{1}$, where $\mathfrak{m}=\mathfrak{m}_{1}\times\mathfrak{m}_{2}$.
Note here that $\varphi$ is not parameterised by $\theta$, nor is
it given any structure and is merely assumed to exist.

Naturally, when fitting mixture of regression models, one considers
only the conditional part of \ref{eq:=00003D00003D000020conditionaldecomp}.
As such, with 
\[
{\cal N}_{k}^{\phi}=\left\{ \mathbb{U}\times\mathbb{Y}\ni\left(u,y\right)\mapsto\mathfrak{f}_{k}\left(y|u;\psi_{k}\right)=\sum_{z=1}^{k}\pi_{z}\rho\left(y|u;\theta_{z}\right):\theta_{z}\in\mathbb{T},\pi_{z}\in\left[0,1\right],\sum_{z=1}^{k}\pi_{z}=1,z\in\left[k\right]\right\} ,
\]
for each $k\in\left[\bar{k}\right]$, one seeks to identify the smallest
$k_{0}\in\left[\bar{k}\right]$, for which $f_{0}=\mathfrak{f}_{0}\varphi$,
where $\mathfrak{f}_{0}\in{\cal N}_{k_{0}}^{\phi}$, or more concisely
and in conforming to Section \ref{sec:Introduction}, $f_{0}\in{\cal M}_{k_{0}}^{\phi}$.
To this end, the usual risk, the average negative log-densities $\ell_{k}\left(\psi_{k}\right)=-\mathrm{E}\left\{ \log f_{k}\left(X;\psi_{k}\right)\right\} $,
for each $k\in\left[\bar{k}\right]$ with $f_{k}\in{\cal M}_{k}^{\phi}$,
is not estimated by the natural estimator $\ell_{k,n}\left(\psi_{k}\right)$,
but instead by the empirical average negative log-conditional likelihood
\[
\mathfrak{L}_{k,n}\left(\psi_{k}\right)=-\frac{1}{n}\sum_{i=1}^{n}\log\mathfrak{f}_{k}\left(Y_{i}|U_{i};\psi_{k}\right),
\]
where $\mathfrak{f}_{k}\in{\cal N}_{k}^{\phi}$.

A useful fact is that, since $f_{k}\left(u,y;\psi_{k}\right)=\mathfrak{f}_{k}\left(y|u;\psi_{k}\right)\varphi\left(u\right)$,
we have, for each $k\in\left[\bar{k}\right]$ and $\psi_{k}\in\mathbb{S}_{k}$,
\[
\ell_{k,n}\left(\psi_{k}\right)=-P_{n}\log f_{k}\left(\cdot;\psi_{k}\right)=-P_{n}\log\mathfrak{f}_{k}\left(\cdot;\psi_{k}\right)-P_{n}\log\varphi=\mathfrak{L}_{k,n}\left(\psi_{k}\right)-\frac{1}{n}\sum_{i=1}^{n}\log\varphi\left(U_{i}\right).
\]
Taking expectations yields $\mathrm{E}\left\{ \mathfrak{L}_{k,n}\left(\psi_{k}\right)\right\} =\ell_{k}\left(\psi_{k}\right)+\mathrm{E}\left\{ \log\varphi\left(U\right)\right\} $,
where the $\mathrm{E}\left\{ \log\varphi\left(U\right)\right\} $
term is the same across all $k$ and has no influence on comparisons
between risks for different values of $k$ or different models within
${\cal M}_{k}^{\phi}$ or ${\cal N}_{k}^{\phi}$, since $\varphi$
has no parameters. Moreover, the sample-level identity above shows
that $\ell_{k,n}\left(\psi_{k}\right)$ and $\mathfrak{L}_{k,n}\left(\psi_{k}\right)$
differ by an additive term that does not depend on $k$ or $\psi_{k}$.
Consequently, Lemma \ref{lem:Convergenceofmin} and Lemma \ref{lem:Rateofmin}
remain valid with $\mathfrak{L}_{k,n}\left(\psi_{k}\right)$ in place
of $\ell_{k,n}\left(\psi_{k}\right)$.

Furthermore, since $P_{n}\log f=P_{n}\log\mathfrak{f}+P_{n}\log\varphi$,
maximisers of the joint likelihood over ${\cal M}_{k}^{\phi}$ correspond
to maximisers of the conditional likelihood over ${\cal N}_{k}^{\phi}$.
In particular, if 
\[
\hat{f}_{k,n}\in\underset{f\in{\cal M}_{k}^{\phi}}{\arg\max}P_{n}\log f,
\]
then there exists 
\[
\hat{\mathfrak{f}}_{k,n}\in\underset{\mathfrak{f}\in{\cal N}_{k}^{\phi}}{\arg\max}P_{n}\log\mathfrak{f},
\]
such that $\hat{f}_{k,n}=\hat{\mathfrak{f}}_{k,n}\varphi$. If, in
addition, $\varphi\left(u\right)>0$ for all $u\in\mathbb{U}$, then
$\log(\hat{f}_{k,n}/f_{0})=\log(\hat{\mathfrak{f}}_{k,n}/\mathfrak{f}_{0})$,
and Proposition \ref{prop:MainTechProp} is directly applicable to
the maximum conditional likelihood estimator. Together, these facts
imply that we can consistently estimate $k_{0}$ by the conditional
version of $\hat{k}_{n}$: 
\[
\tilde{k}_{n}=\min\arg\min_{k\in\left[\bar{k}\right]}\left\{ \min_{\psi_{k}\in\mathbb{S}_{k}}\mathfrak{L}_{k,n}\left(\psi_{k}\right)+\mathrm{pen}_{k,n}\right\} .
\]
To be precise, if $\phi$ has form \ref{eq:=00003D00003D000020conditionaldecomp},
and Assumptions A1--A5 hold for the corresponding joint mixture class
${\cal M}_{k}^{\phi}$, then Theorem \ref{thm:GenericConsistency}
holds with $\hat{k}_{n}$ replaced by $\tilde{k}_{n}$, with no additional
assumptions beyond A1--A5. This example is general and includes many
models considered in the literature, as noted at the start. In particular,
under compactness restrictions on $\mathbb{T}$ and mild integrability
conditions with respect to $\mathfrak{m}_{1}$, the verification of
A4 and A5 (or A5$^{*}$) for regression kernels $\rho$ proceeds exactly
as in Sections \ref{subsec:Laplace-mixture-models} and \ref{subsec:Student--mixture-example},
with $x$ replaced by $y$ and with the regression location depending
on $u$. It is possible to verify these assumptions in settings with
well-behaved $\phi$, using the results of \citet{keribin2000consistent},
such as in the case of normal mixture regression models (e.g., \citealp{quandt1972new}
and \citealp{jones1992fitting}). However, like in the Laplace mixture
example, the Laplace mixture of regression models, such as that of
\citet{song2014robust}, have non-differentiable PDFs and thus violate
the same conditions of \citet{keribin2000consistent} as in the previous
example. Mixture of Student $t$ regression models such as those of
\citet{GalimbertiSoffritti2014Multivariate} and \citet{yao2014robust}
will also incur the same problems as described in Section \ref{subsec:Student--mixture-example}
if the degree of freedom parameters considered are too small. 
\begin{rem}
Since $\varphi$ is not parameterised by $\theta$, it does not influence
the maximisation of the likelihood over $\psi_{k}$, and it cancels
from likelihood ratios $\log(\hat{f}_{k,n}/f_{0})$ whenever $\varphi\left(u\right)>0$
for all $u\in\mathbb{U}$. However, $\varphi$ may enter the verification
of A1--A5 through integrability requirements with respect to the
dominating measure $\mathfrak{m}=\mathfrak{m}_{1}\times\mathfrak{m}_{2}$.
A convenient convention is to take $\mathfrak{m}_{1}$ to be the marginal
law of $U$, in which case $\varphi=1$ and $\phi\left(x;\theta\right)=\rho\left(y|u;\theta\right)$
is a density with respect to $\mathfrak{m}=\mathfrak{m}_{1}\times\mathfrak{m}_{2}$.
Under this convention, the verification of A1--A5 reduces to verifying
them for $\rho$. 
\end{rem}

\subsection{Numerical simulations}

Here, we provide some numerical evidence towards the comparative performance
of the $\epsilon$-BIC and $\nu$-BIC versus alternatives such as
the AIC, BIC, and PanIC. We consider four scenarios corresponding
to our technical examples in Sections \ref{subsec:Laplace-mixture-models}
and \ref{subsec:Student--mixture-example}, which present pathologies
that do not verify the hypotheses of \citet{keribin2000consistent}.
In each case, we simulate IID data $\mathbf{X}_{n}$ from the measure
with density $f_{0}$, a mixture with true number of components $k_{0}$
of base density $\phi$ with true parameter $\psi_{k_{0}}^{*}$. Then,
we estimate $\hat{k}_{n}$ by considering mixtures of up to $K>k_{0}$
components of the same form, using data of sizes $n\in\left\{ 100,1000,10000\right\} $.
The four scenarios considered are described in Table \ref{tab:Description-of-simulation}.

\begin{table}
\caption{Description of simulation scenarios.}
\label{tab:Description-of-simulation}

\centering{}%
\begin{tabular}{|ccccc|}
\hline 
Scenario  & $\phi$  & $k_{0}$  & $\psi_{k_{0}}^{*}$  & $K$\tabularnewline
\hline 
\hline 
S-L1  & Laplace \eqref{eq:Laplace=00003D00003D000020Density}  & 2  & $\begin{array}{c}
\left(\pi_{1}^{*},\pi_{2}^{*}\right)=\left(1/2,1/2\right)\\
\left(\mu_{1}^{*},\mu_{2}^{*}\right)=\left(0,2\right)\\
\left(\gamma_{1}^{*},\gamma_{2}^{*}\right)=\left(1,4\right)
\end{array}$  & 3\tabularnewline
\hline 
S-L2  & Laplace \eqref{eq:Laplace=00003D00003D000020Density}  & 3  & $\begin{array}{c}
\left(\pi_{1}^{*},\pi_{2}^{*},\pi_{3}^{*}\right)=\left(1/4,1/2,1/4\right)\\
\left(\mu_{1}^{*},\mu_{2}^{*},\mu_{3}^{*}\right)=\left(-3,0,3\right)\\
\left(\gamma_{1}^{*},\gamma_{2}^{*},\gamma_{3}^{*}\right)=\left(1,1,1\right)
\end{array}$  & 5\tabularnewline
\hline 
S-T1  & Student-$t$ \eqref{eq:t-density-variable-v}  & 2  & $\begin{array}{c}
\left(\pi_{1}^{*},\pi_{2}^{*}\right)=\left(1/2,1/2\right)\\
\left(\mu_{1}^{*},\mu_{2}^{*}\right)=\left(0,2\right)\\
\left(\gamma_{1}^{*},\gamma_{2}^{*}\right)=\left(1,4\right)\\
\left(v_{1}^{*},v_{2}^{*}\right)=\left(5,5\right)
\end{array}$  & 3\tabularnewline
\hline 
S-T2  & Student-$t$ \eqref{eq:t-density-variable-v}  & 3  & $\begin{array}{c}
\left(\pi_{1}^{*},\pi_{2}^{*},\pi_{3}^{*}\right)=\left(1/4,1/2,1/4\right)\\
\left(\mu_{1}^{*},\mu_{2}^{*},\mu_{3}^{*}\right)=\left(-3,0,3\right)\\
\left(\gamma_{1}^{*},\gamma_{2}^{*},\gamma_{3}^{*}\right)=\left(1,1,1\right)\\
\left(v_{1}^{*},v_{2}^{*},v_{3}^{*}\right)=\left(5,5,5\right)
\end{array}$  & 5\tabularnewline
\hline 
\end{tabular}
\end{table}

We repeat each scenario 200 times, and evaluate the proportions of
times $\hat{k}_{n}=k$, for each $k\in\left[K\right]$ and for each
IC. Each IC is considered to have penalties in the form $\mathrm{pen}_{k,n}=\alpha\left(k\right)r_{n}$,
where $r_{n}$ is the shape of the penalty. For the $\epsilon$-BIC,
we consider $\epsilon=0.1$ and $\epsilon=0.02$. For the $\nu$-BIC,
we consider $\nu=1$ (also corresponding to the $\epsilon$-BIC with
$\epsilon=1$), and $\nu=3$. For PanIC, we use the Sin--White information
criterion form considered in \citet{nguyen2024panic}. The shapes
for the considered penalties are thus: AIC: $r_{n}=2n^{-1}$; BIC:
$r_{n}=n^{-1}\log n$; $\epsilon$-BIC: $r_{n}=n^{-1}\log^{1+\epsilon}n$;
$\nu$-BIC: $r_{n}=n^{-1}\mathrm{Ln}^{\circ\nu}\left(n\right)\log n$;
PanIC: $r_{n}=n^{-1/2}\sqrt{\mathrm{Ln}^{\circ2}\left(n\right)}$.
The constant multiples $\alpha\left(k\right)$ that we consider are
of forms (I) $\mathrm{dim}\left(\mathbb{S}_{k}\right)/2$ (the classic
BIC penalty), (II) $\left\{ \mathrm{dim}\left(\mathbb{S}_{k}\right)\right\} ^{2}/2$,
(III) $k$ (only the number of mixture components), and (IV) $k^{2}$.
The results are recorded in Tables \ref{tab:S-L1}--\ref{tab:S-T2}.

\begin{table}
\caption{Results from 200 replications of scenario S-L1. The $\hat{k}_{n}=\dots$
columns indicate the proportion of times $\hat{k}_{n}=k$, where $*$
marks the true number of components $k_{0}$. Bold text indicates
the highest proportion of correct identifications of $k_{0}$. }
\label{tab:S-L1}

\centering{}%
\begin{tabular}{|cc|ccc|ccc|ccc|}
\hline 
 &  & \multicolumn{3}{c|}{$n=100$} & \multicolumn{3}{c|}{$n=1000$} & \multicolumn{3}{c|}{$n=10000$}\tabularnewline
Pen. shape  & $\alpha\left(k\right)$  & $\hat{k}_{n}=1$  & $2^{*}$  & $3$  & $1$  & $2^{*}$  & $3$  & $1$  & $2^{*}$  & $3$\tabularnewline
\hline 
\hline 
AIC  & I  & 0.070  & 0.630  & 0.300  & 0.000  & 0.855  & 0.145  & 0.000  & 0.885  & 0.115\tabularnewline
 & II  & 0.995  & 0.005  & 0.000  & 0.000  & \textbf{1.000}  & 0.000  & 0.000  & \textbf{1.000}  & 0.000\tabularnewline
 & III  & 0.000  & 0.280  & 0.720  & 0.000  & 0.455  & 0.545  & 0.000  & 0.420  & 0.580\tabularnewline
 & IV  & 0.085  & \textbf{0.810}  & 0.105  & 0.000  & 0.960  & 0.040  & 0.000  & 0.960  & 0.040\tabularnewline
\hline 
BIC  & I  & 0.495  & 0.495  & 0.010  & 0.000  & \textbf{1.000}  & 0.000  & 0.000  & \textbf{1.000}  & 0.000\tabularnewline
 & II  & 1.000  & 0.000  & 0.000  & 0.820  & 0.180  & 0.000  & 0.000  & \textbf{1.000}  & 0.000\tabularnewline
 & III  & 0.030  & 0.555  & 0.415  & 0.000  & 0.905  & 0.095  & 0.000  & 0.950  & 0.050\tabularnewline
 & IV  & 0.500  & 0.500  & 0.000  & 0.000  & \textbf{1.000}  & 0.000  & 0.000  & \textbf{1.000}  & 0.000\tabularnewline
\hline 
$\epsilon$-BIC  & I  & 0.520  & 0.475  & 0.005  & 0.000  & \textbf{1.000}  & 0.000  & 0.000  & \textbf{1.000}  & 0.000\tabularnewline
$(\epsilon=0.02)$  & II  & 1.000  & 0.000  & 0.000  & 0.885  & 0.115  & 0.000  & 0.000  & \textbf{1.000}  & 0.000\tabularnewline
 & III  & 0.040  & 0.555  & 0.405  & 0.000  & 0.910  & 0.090  & 0.000  & 0.955  & 0.045\tabularnewline
 & IV  & 0.520  & 0.480  & 0.000  & 0.000  & \textbf{1.000}  & 0.000  & 0.000  & \textbf{1.000}  & 0.000\tabularnewline
\hline 
$\epsilon$-BIC  & I  & 0.575  & 0.425  & 0.000  & 0.000  & \textbf{1.000}  & 0.000  & 0.000  & \textbf{1.000}  & 0.000\tabularnewline
$(\epsilon=0.1)$  & II  & 1.000  & 0.000  & 0.000  & 0.990  & 0.010  & 0.000  & 0.000  & \textbf{1.000}  & 0.000\tabularnewline
 & III  & 0.050  & 0.590  & 0.360  & 0.000  & 0.935  & 0.065  & 0.000  & 0.975  & 0.025\tabularnewline
 & IV  & 0.575  & 0.425  & 0.000  & 0.000  & \textbf{1.000}  & 0.000  & 0.000  & \textbf{1.000}  & 0.000\tabularnewline
\hline 
$\nu$-BIC  & I  & 1.000  & 0.000  & 0.000  & 0.805  & 0.195  & 0.000  & 0.000  & \textbf{1.000}  & 0.000\tabularnewline
$\left(\nu=1\right)$  & II  & 1.000  & 0.000  & 0.000  & 1.000  & 0.000  & 0.000  & 1.000  & 0.000  & 0.000\tabularnewline
 & III  & 0.790  & 0.210  & 0.000  & 0.000  & \textbf{1.000}  & 0.000  & 0.000  & \textbf{1.000}  & 0.000\tabularnewline
 & IV  & 1.000  & 0.000  & 0.000  & 0.805  & 0.195  & 0.000  & 0.000  & \textbf{1.000}  & 0.000\tabularnewline
\hline 
$\nu$-BIC  & I  & 0.495  & 0.495  & 0.010  & 0.000  & \textbf{1.000}  & 0.000  & 0.000  & \textbf{1.000}  & 0.000\tabularnewline
$\left(\nu=3\right)$  & II  & 1.000  & 0.000  & 0.000  & 0.820  & 0.180  & 0.000  & 0.000  & \textbf{1.000}  & 0.000\tabularnewline
 & III  & 0.030  & 0.555  & 0.415  & 0.000  & 0.905  & 0.095  & 0.000  & 0.950  & 0.050\tabularnewline
 & IV  & 0.500  & 0.500  & 0.000  & 0.000  & \textbf{1.000}  & 0.000  & 0.000  & \textbf{1.000}  & 0.000\tabularnewline
\hline 
PanIC  & I  & 0.985  & 0.015  & 0.000  & 0.680  & 0.320  & 0.000  & 0.000  & \textbf{1.000}  & 0.000\tabularnewline
 & II  & 1.000  & 0.000  & 0.000  & 1.000  & 0.000  & 0.000  & 1.000  & 0.000  & 0.000\tabularnewline
 & III  & 0.445  & 0.530  & 0.025  & 0.000  & \textbf{1.000}  & 0.000  & 0.000  & \textbf{1.000}  & 0.000\tabularnewline
 & IV  & 0.985  & 0.015  & 0.000  & 0.680  & 0.320  & 0.000  & 0.000  & \textbf{1.000}  & 0.000\tabularnewline
\hline 
\end{tabular}
\end{table}

\begin{sidewaystable}
\caption{Results from 200 replications of scenario S-L2. The $\hat{k}_{n}=\dots$
columns indicate the proportion of times $\hat{k}_{n}=k$, where $*$
marks the true number of components $k_{0}$. Bold text indicates
the highest proportion of correct identifications of $k_{0}$. }
\label{tab:S-L2}

\centering{}%
\begin{tabular}{|cc|ccccc|ccccc|ccccc|}
\hline 
 &  & \multicolumn{5}{c|}{$n=100$} & \multicolumn{5}{c|}{$n=1000$} & \multicolumn{5}{c|}{$n=10000$}\tabularnewline
Pen. shape  & $\alpha\left(k\right)$  & $k=1$  & $2$  & $3^{*}$  & $4$  & $5$  & $1$  & $2$  & $3^{*}$  & $4$  & $5$  & $1$  & $2$  & $3^{*}$  & $4$  & $5$\tabularnewline
\hline 
\hline 
AIC  & I  & 0.010  & 0.085  & \textbf{0.550}  & 0.240  & 0.115  & 0.000  & 0.000  & 0.685  & 0.165  & 0.150  & 0.000  & 0.000  & 0.840  & 0.095  & 0.065\tabularnewline
 & II  & 1.000  & 0.000  & 0.000  & 0.000  & 0.000  & 0.000  & 0.055  & 0.945  & 0.000  & 0.000  & 0.000  & 0.000  & \textbf{1.000}  & 0.000  & 0.000\tabularnewline
 & III  & 0.000  & 0.005  & 0.040  & 0.180  & 0.775  & 0.000  & 0.000  & 0.145  & 0.185  & 0.670  & 0.000  & 0.000  & 0.255  & 0.210  & 0.535\tabularnewline
 & IV  & 0.070  & 0.415  & 0.515  & 0.000  & 0.000  & 0.000  & 0.000  & 0.995  & 0.005  & 0.000  & 0.000  & 0.000  & \textbf{1.000}  & 0.000  & 0.000\tabularnewline
\hline 
BIC  & I  & 0.500  & 0.270  & 0.230  & 0.000  & 0.000  & 0.000  & 0.000  & \textbf{1.000}  & 0.000  & 0.000  & 0.000  & 0.000  & \textbf{1.000}  & 0.000  & 0.000\tabularnewline
 & II  & 1.000  & 0.000  & 0.000  & 0.000  & 0.000  & 1.000  & 0.000  & 0.000  & 0.000  & 0.000  & 0.000  & 0.000  & \textbf{1.000}  & 0.000  & 0.000\tabularnewline
 & III  & 0.000  & 0.035  & 0.380  & 0.355  & 0.230  & 0.000  & 0.000  & 0.755  & 0.155  & 0.090  & 0.000  & 0.000  & 0.965  & 0.030  & 0.005\tabularnewline
 & IV  & 0.545  & 0.440  & 0.015  & 0.000  & 0.000  & 0.000  & 0.000  & \textbf{1.000}  & 0.000  & 0.000  & 0.000  & 0.000  & \textbf{1.000}  & 0.000  & 0.000\tabularnewline
\hline 
$\epsilon$-BIC  & I  & 0.515  & 0.265  & 0.220  & 0.000  & 0.000  & 0.000  & 0.000  & \textbf{1.000}  & 0.000  & 0.000  & 0.000  & 0.000  & \textbf{1.000}  & 0.000  & 0.000\tabularnewline
$(\epsilon=0.02)$  & II  & 1.000  & 0.000  & 0.000  & 0.000  & 0.000  & 1.000  & 0.000  & 0.000  & 0.000  & 0.000  & 0.000  & 0.000  & \textbf{1.000}  & 0.000  & 0.000\tabularnewline
 & III  & 0.000  & 0.035  & 0.400  & 0.360  & 0.205  & 0.000  & 0.000  & 0.785  & 0.135  & 0.080  & 0.000  & 0.000  & 0.980  & 0.015  & 0.005\tabularnewline
 & IV  & 0.560  & 0.430  & 0.010  & 0.000  & 0.000  & 0.000  & 0.000  & \textbf{1.000}  & 0.000  & 0.000  & 0.000  & 0.000  & \textbf{1.000}  & 0.000  & 0.000\tabularnewline
\hline 
$\epsilon$-BIC  & I  & 0.620  & 0.265  & 0.115  & 0.000  & 0.000  & 0.000  & 0.000  & \textbf{1.000}  & 0.000  & 0.000  & 0.000  & 0.000  & \textbf{1.000}  & 0.000  & 0.000\tabularnewline
$(\epsilon=0.1)$  & II  & 1.000  & 0.000  & 0.000  & 0.000  & 0.000  & 1.000  & 0.000  & 0.000  & 0.000  & 0.000  & 0.000  & 0.000  & \textbf{1.000}  & 0.000  & 0.000\tabularnewline
 & III  & 0.005  & 0.065  & 0.450  & 0.325  & 0.155  & 0.000  & 0.000  & 0.875  & 0.090  & 0.035  & 0.000  & 0.000  & \textbf{1.000}  & 0.000  & 0.000\tabularnewline
 & IV  & 0.645  & 0.350  & 0.005  & 0.000  & 0.000  & 0.000  & 0.000  & \textbf{1.000}  & 0.000  & 0.000  & 0.000  & 0.000  & \textbf{1.000}  & 0.000  & 0.000\tabularnewline
\hline 
$\nu$-BIC  & I  & 1.000  & 0.000  & 0.000  & 0.000  & 0.000  & 1.000  & 0.000  & 0.000  & 0.000  & 0.000  & 0.000  & 0.000  & \textbf{1.000}  & 0.000  & 0.000\tabularnewline
$\left(\nu=1\right)$  & II  & 1.000  & 0.000  & 0.000  & 0.000  & 0.000  & 1.000  & 0.000  & 0.000  & 0.000  & 0.000  & 1.000  & 0.000  & 0.000  & 0.000  & 0.000\tabularnewline
 & III  & 0.805  & 0.180  & 0.015  & 0.000  & 0.000  & 0.000  & 0.000  & \textbf{1.000}  & 0.000  & 0.000  & 0.000  & 0.000  & \textbf{1.000}  & 0.000  & 0.000\tabularnewline
 & IV  & 1.000  & 0.000  & 0.000  & 0.000  & 0.000  & 1.000  & 0.000  & 0.000  & 0.000  & 0.000  & 0.000  & 0.000  & \textbf{1.000}  & 0.000  & 0.000\tabularnewline
\hline 
$\nu$-BIC  & I  & 0.500  & 0.270  & 0.230  & 0.000  & 0.000  & 0.000  & 0.000  & \textbf{1.000}  & 0.000  & 0.000  & 0.000  & 0.000  & \textbf{1.000}  & 0.000  & 0.000\tabularnewline
$\left(\nu=3\right)$  & II  & 1.000  & 0.000  & 0.000  & 0.000  & 0.000  & 1.000  & 0.000  & 0.000  & 0.000  & 0.000  & 0.000  & 0.000  & \textbf{1.000}  & 0.000  & 0.000\tabularnewline
 & III  & 0.000  & 0.035  & 0.380  & 0.355  & 0.230  & 0.000  & 0.000  & 0.755  & 0.155  & 0.090  & 0.000  & 0.000  & 0.965  & 0.030  & 0.005\tabularnewline
 & IV  & 0.545  & 0.440  & 0.015  & 0.000  & 0.000  & 0.000  & 0.000  & \textbf{1.000}  & 0.000  & 0.000  & 0.000  & 0.000  & \textbf{1.000}  & 0.000  & 0.000\tabularnewline
\hline 
PanIC  & I  & 0.995  & 0.005  & 0.000  & 0.000  & 0.000  & 0.990  & 0.005  & 0.005  & 0.000  & 0.000  & 0.000  & 0.000  & \textbf{1.000}  & 0.000  & 0.000\tabularnewline
 & II  & 1.000  & 0.000  & 0.000  & 0.000  & 0.000  & 1.000  & 0.000  & 0.000  & 0.000  & 0.000  & 1.000  & 0.000  & 0.000  & 0.000  & 0.000\tabularnewline
 & III  & 0.370  & 0.270  & 0.345  & 0.015  & 0.000  & 0.000  & 0.000  & \textbf{1.000}  & 0.000  & 0.000  & 0.000  & 0.000  & \textbf{1.000}  & 0.000  & 0.000\tabularnewline
 & IV  & 0.995  & 0.005  & 0.000  & 0.000  & 0.000  & 0.995  & 0.005  & 0.000  & 0.000  & 0.000  & 0.000  & 0.000  & \textbf{1.000}  & 0.000  & 0.000\tabularnewline
\hline 
\end{tabular}
\end{sidewaystable}

\begin{table}
\caption{Results from 200 replications of scenario S-T1. The $\hat{k}_{n}=\dots$
columns indicate the proportion of times $\hat{k}_{n}=k$, where $*$
marks the true number of components $k_{0}$. Bold text indicates
the highest proportion of correct identifications of $k_{0}$. }
\label{tab:S-T1}

\centering{}%
\begin{tabular}{|cc|ccc|ccc|ccc|}
\hline 
 &  & \multicolumn{3}{c|}{$n=100$} & \multicolumn{3}{c|}{$n=1000$} & \multicolumn{3}{c|}{$n=10000$}\tabularnewline
Pen. shape  & $\alpha\left(k\right)$  & $k=1$  & $2^{*}$  & $3$  & $1$  & $2^{*}$  & $3$  & $1$  & $2^{*}$  & $3$\tabularnewline
\hline 
\hline 
AIC  & I  & 0.015  & 0.575  & 0.410  & 0.000  & 0.800  & 0.200  & 0.000  & 0.890  & 0.110\tabularnewline
 & II  & 0.990  & 0.010  & 0.000  & 0.000  & \textbf{1.000}  & 0.000  & 0.000  & \textbf{1.000}  & 0.000\tabularnewline
 & III  & 0.000  & 0.175  & 0.825  & 0.000  & 0.450  & 0.550  & 0.000  & 0.505  & 0.495\tabularnewline
 & IV  & 0.020  & \textbf{0.875}  & 0.105  & 0.000  & 0.950  & 0.050  & 0.000  & 0.970  & 0.030\tabularnewline
\hline 
BIC  & I  & 0.240  & 0.755  & 0.005  & 0.000  & \textbf{1.000}  & 0.000  & 0.000  & \textbf{1.000}  & 0.000\tabularnewline
 & II  & 1.000  & 0.000  & 0.000  & 0.070  & 0.930  & 0.000  & 0.000  & \textbf{1.000}  & 0.000\tabularnewline
 & III  & 0.010  & 0.455  & 0.535  & 0.000  & 0.845  & 0.155  & 0.000  & 0.970  & 0.030\tabularnewline
 & IV  & 0.240  & 0.760  & 0.000  & 0.000  & \textbf{1.000}  & 0.000  & 0.000  & \textbf{1.000}  & 0.000\tabularnewline
\hline 
$\epsilon$-BIC  & I  & 0.265  & 0.735  & 0.000  & 0.000  & \textbf{1.000}  & 0.000  & 0.000  & \textbf{1.000}  & 0.000\tabularnewline
$(\epsilon=0.02)$  & II  & 1.000  & 0.000  & 0.000  & 0.105  & 0.895  & 0.000  & 0.000  & \textbf{1.000}  & 0.000\tabularnewline
 & III  & 0.010  & 0.465  & 0.525  & 0.000  & 0.860  & 0.140  & 0.000  & 0.970  & 0.030\tabularnewline
 & IV  & 0.265  & 0.735  & 0.000  & 0.000  & \textbf{1.000}  & 0.000  & 0.000  & \textbf{1.000}  & 0.000\tabularnewline
\hline 
$\epsilon$-BIC  & I  & 0.365  & 0.635  & 0.000  & 0.000  & \textbf{1.000}  & 0.000  & 0.000  & \textbf{1.000}  & 0.000\tabularnewline
$(\epsilon=0.1)$  & II  & 1.000  & 0.000  & 0.000  & 0.425  & 0.575  & 0.000  & 0.000  & \textbf{1.000}  & 0.000\tabularnewline
 & III  & 0.010  & 0.520  & 0.470  & 0.000  & 0.915  & 0.085  & 0.000  & 0.975  & 0.025\tabularnewline
 & IV  & 0.365  & 0.635  & 0.000  & 0.000  & \textbf{1.000}  & 0.000  & 0.000  & \textbf{1.000}  & 0.000\tabularnewline
\hline 
$\nu$-BIC  & I  & 1.000  & 0.000  & 0.000  & 0.040  & 0.960  & 0.000  & 0.000  & \textbf{1.000}  & 0.000\tabularnewline
$\left(\nu=1\right)$  & II  & 1.000  & 0.000  & 0.000  & 1.000  & 0.000  & 0.000  & 0.560  & 0.440  & 0.000\tabularnewline
 & III  & 0.635  & 0.365  & 0.000  & 0.000  & \textbf{1.000}  & 0.000  & 0.000  & \textbf{1.000}  & 0.000\tabularnewline
 & IV  & 1.000  & 0.000  & 0.000  & 0.040  & 0.960  & 0.000  & 0.000  & \textbf{1.000}  & 0.000\tabularnewline
\hline 
$\nu$-BIC  & I  & 0.240  & 0.755  & 0.005  & 0.000  & \textbf{1.000}  & 0.000  & 0.000  & \textbf{1.000}  & 0.000\tabularnewline
$\left(\nu=3\right)$  & II  & 1.000  & 0.000  & 0.000  & 0.070  & 0.930  & 0.000  & 0.000  & \textbf{1.000}  & 0.000\tabularnewline
 & III  & 0.010  & 0.455  & 0.535  & 0.000  & 0.845  & 0.155  & 0.000  & 0.970  & 0.030\tabularnewline
 & IV  & 0.240  & 0.760  & 0.000  & 0.000  & \textbf{1.000}  & 0.000  & 0.000  & \textbf{1.000}  & 0.000\tabularnewline
\hline 
PanIC  & I  & 0.970  & 0.030  & 0.000  & 0.005  & 0.995  & 0.000  & 0.000  & \textbf{1.000}  & 0.000\tabularnewline
 & II  & 1.000  & 0.000  & 0.000  & 1.000  & 0.000  & 0.000  & 1.000  & 0.000  & 0.000\tabularnewline
 & III  & 0.185  & 0.785  & 0.030  & 0.000  & \textbf{1.000}  & 0.000  & 0.000  & \textbf{1.000}  & 0.000\tabularnewline
 & IV  & 0.970  & 0.030  & 0.000  & 0.005  & 0.995  & 0.000  & 0.000  & \textbf{1.000}  & 0.000\tabularnewline
\hline 
\end{tabular}
\end{table}

\begin{sidewaystable}
\caption{Results from 200 replications of scenario S-T2. The $\hat{k}_{n}=\dots$
columns indicate the proportion of times $\hat{k}_{n}=k$, where $*$
marks the true number of components $k_{0}$. Bold text indicates
the highest proportion of correct identifications of $k_{0}$. }
\label{tab:S-T2}

\centering{}%
\begin{tabular}{|cc|ccccc|ccccc|ccccc|}
\hline 
 &  & \multicolumn{5}{c|}{$n=100$} & \multicolumn{5}{c|}{$n=1000$} & \multicolumn{5}{c|}{$n=10000$}\tabularnewline
Pen. shape  & $\alpha\left(k\right)$  & $k=1$  & $2$  & $3^{*}$  & $4$  & $5$  & $1$  & $2$  & $3^{*}$  & $4$  & $5$  & $1$  & $2$  & $3^{*}$  & $4$  & $5$\tabularnewline
\hline 
\hline 
AIC  & I  & 0.060  & 0.225  & \textbf{0.350}  & 0.240  & 0.125  & 0.000  & 0.000  & 0.690  & 0.195  & 0.115  & 0.000  & 0.000  & 0.895  & 0.065  & 0.040\tabularnewline
 & II  & 1.000  & 0.000  & 0.000  & 0.000  & 0.000  & 0.030  & 0.840  & 0.130  & 0.000  & 0.000  & 0.000  & 0.000  & \textbf{1.000}  & 0.000  & 0.000\tabularnewline
 & III  & 0.000  & 0.015  & 0.065  & 0.135  & 0.785  & 0.000  & 0.000  & 0.155  & 0.225  & 0.620  & 0.000  & 0.000  & 0.315  & 0.215  & 0.470\tabularnewline
 & IV  & 0.145  & 0.515  & 0.335  & 0.005  & 0.000  & 0.000  & 0.000  & 0.985  & 0.015  & 0.000  & 0.000  & 0.000  & 0.995  & 0.005  & 0.000\tabularnewline
\hline 
BIC  & I  & 0.690  & 0.235  & 0.070  & 0.005  & 0.000  & 0.000  & 0.000  & \textbf{1.000}  & 0.000  & 0.000  & 0.000  & 0.000  & \textbf{1.000}  & 0.000  & 0.000\tabularnewline
 & II  & 1.000  & 0.000  & 0.000  & 0.000  & 0.000  & 1.000  & 0.000  & 0.000  & 0.000  & 0.000  & 0.000  & 0.000  & \textbf{1.000}  & 0.000  & 0.000\tabularnewline
 & III  & 0.015  & 0.115  & 0.295  & 0.280  & 0.295  & 0.000  & 0.000  & 0.795  & 0.145  & 0.060  & 0.000  & 0.000  & 0.970  & 0.025  & 0.005\tabularnewline
 & IV  & 0.720  & 0.270  & 0.010  & 0.000  & 0.000  & 0.000  & 0.010  & 0.990  & 0.000  & 0.000  & 0.000  & 0.000  & \textbf{1.000}  & 0.000  & 0.000\tabularnewline
\hline 
$\epsilon$-BIC  & I  & 0.715  & 0.220  & 0.065  & 0.000  & 0.000  & 0.000  & 0.000  & \textbf{1.000}  & 0.000  & 0.000  & 0.000  & 0.000  & \textbf{1.000}  & 0.000  & 0.000\tabularnewline
$(\epsilon=0.02)$  & II  & 1.000  & 0.000  & 0.000  & 0.000  & 0.000  & 1.000  & 0.000  & 0.000  & 0.000  & 0.000  & 0.000  & 0.000  & \textbf{1.000}  & 0.000  & 0.000\tabularnewline
 & III  & 0.015  & 0.120  & 0.315  & 0.285  & 0.265  & 0.000  & 0.000  & 0.840  & 0.115  & 0.045  & 0.000  & 0.000  & 0.970  & 0.025  & 0.005\tabularnewline
 & IV  & 0.750  & 0.245  & 0.005  & 0.000  & 0.000  & 0.000  & 0.010  & 0.990  & 0.000  & 0.000  & 0.000  & 0.000  & \textbf{1.000}  & 0.000  & 0.000\tabularnewline
\hline 
$\epsilon$-BIC  & I  & 0.830  & 0.155  & 0.015  & 0.000  & 0.000  & 0.000  & 0.000  & \textbf{1.000}  & 0.000  & 0.000  & 0.000  & 0.000  & \textbf{1.000}  & 0.000  & 0.000\tabularnewline
$(\epsilon=0.1)$  & II  & 1.000  & 0.000  & 0.000  & 0.000  & 0.000  & 1.000  & 0.000  & 0.000  & 0.000  & 0.000  & 0.000  & 0.000  & \textbf{1.000}  & 0.000  & 0.000\tabularnewline
 & III  & 0.035  & 0.185  & 0.325  & 0.270  & 0.185  & 0.000  & 0.000  & 0.910  & 0.070  & 0.020  & 0.000  & 0.000  & 0.995  & 0.005  & 0.000\tabularnewline
 & IV  & 0.845  & 0.155  & 0.000  & 0.000  & 0.000  & 0.000  & 0.045  & 0.955  & 0.000  & 0.000  & 0.000  & 0.000  & \textbf{1.000}  & 0.000  & 0.000\tabularnewline
\hline 
$\nu$-BIC  & I  & 1.000  & 0.000  & 0.000  & 0.000  & 0.000  & 1.000  & 0.000  & 0.000  & 0.000  & 0.000  & 0.000  & 0.000  & \textbf{1.000}  & 0.000  & 0.000\tabularnewline
$\left(\nu=1\right)$  & II  & 1.000  & 0.000  & 0.000  & 0.000  & 0.000  & 1.000  & 0.000  & 0.000  & 0.000  & 0.000  & 1.000  & 0.000  & 0.000  & 0.000  & 0.000\tabularnewline
 & III  & 0.955  & 0.040  & 0.005  & 0.000  & 0.000  & 0.050  & 0.095  & 0.855  & 0.000  & 0.000  & 0.000  & 0.000  & \textbf{1.000}  & 0.000  & 0.000\tabularnewline
 & IV  & 1.000  & 0.000  & 0.000  & 0.000  & 0.000  & 1.000  & 0.000  & 0.000  & 0.000  & 0.000  & 0.000  & 0.000  & \textbf{1.000}  & 0.000  & 0.000\tabularnewline
\hline 
$\nu$-BIC  & I  & 0.690  & 0.235  & 0.070  & 0.005  & 0.000  & 0.000  & 0.000  & \textbf{1.000}  & 0.000  & 0.000  & 0.000  & 0.000  & \textbf{1.000}  & 0.000  & 0.000\tabularnewline
$\left(\nu=3\right)$  & II  & 1.000  & 0.000  & 0.000  & 0.000  & 0.000  & 1.000  & 0.000  & 0.000  & 0.000  & 0.000  & 0.000  & 0.000  & \textbf{1.000}  & 0.000  & 0.000\tabularnewline
 & III  & 0.015  & 0.115  & 0.295  & 0.280  & 0.295  & 0.000  & 0.000  & 0.795  & 0.145  & 0.060  & 0.000  & 0.000  & 0.970  & 0.025  & 0.005\tabularnewline
 & IV  & 0.720  & 0.270  & 0.010  & 0.000  & 0.000  & 0.000  & 0.010  & 0.990  & 0.000  & 0.000  & 0.000  & 0.000  & \textbf{1.000}  & 0.000  & 0.000\tabularnewline
\hline 
PanIC  & I  & 1.000  & 0.000  & 0.000  & 0.000  & 0.000  & 1.000  & 0.000  & 0.000  & 0.000  & 0.000  & 0.000  & 0.000  & \textbf{1.000}  & 0.000  & 0.000\tabularnewline
 & II  & 1.000  & 0.000  & 0.000  & 0.000  & 0.000  & 1.000  & 0.000  & 0.000  & 0.000  & 0.000  & 1.000  & 0.000  & 0.000  & 0.000  & 0.000\tabularnewline
 & III  & 0.595  & 0.280  & 0.120  & 0.005  & 0.000  & 0.025  & 0.060  & 0.915  & 0.000  & 0.000  & 0.000  & 0.000  & \textbf{1.000}  & 0.000  & 0.000\tabularnewline
 & IV  & 1.000  & 0.000  & 0.000  & 0.000  & 0.000  & 1.000  & 0.000  & 0.000  & 0.000  & 0.000  & 0.000  & 0.995  & 0.005  & 0.000  & 0.000\tabularnewline
\hline 
\end{tabular}
\end{sidewaystable}

From Tables \ref{tab:S-L1}--\ref{tab:S-T2}, we observe, as expected,
that as $n$ increases, the proportions of times that the $\epsilon$-BIC,
$\nu$-BIC, and PanIC estimate the correct number of components increase,
and typically become perfectly accurate, except in cases where the
penalty and shape combinations are too severe and thus consistency
is not realised in practice even at $n=10000$, such as for PanIC
with Type II penalties or the $\nu=1$ case of the $\nu$-BIC with
Type II penalties. We observe in our experiments that the Type III
penalties when applied with the $\epsilon$-BIC or $\nu$-BIC shapes
tend to be too small to realise perfect accuracy at $n=10000$, although
it is clear that the proportion of times $\hat{k}_{n}=k_{0}$ is increasing
in each case. In terms of good choices for $\alpha$, we observe that
Type I and Type IV forms appear to work well with all penalty shapes,
with Type II often being too large and Type III being too small.

Perhaps the most surprising outcome is that for some configurations,
the AIC and the BIC can produce perfect or near perfect estimation
of $k_{0}$ for large $n$. Indeed, in the case of the AIC, as we
shall show in Proposition \ref{prop:Delta_assumption_necessary} (albeit
in a simplified setting that does not include the current experimental
scenarios), it is impossible to guarantee that the proportion of accurate
estimates of $k_{0}$ goes to one. This points to either this being
a small sample phenomenon and thus the true proportion of overestimating
$k_{0}$ is possibly small but non-zero, or that there exist settings
where the AIC is consistent, although we believe that the former is
more likely than the latter given our results in Section \ref{subsec:Inefficiency-of-consistent}.
In the case of the BIC, we argue that it is unknown whether the conditions
of \citet{keribin2000consistent} are necessary and sufficient, and
that there is potentially a gap between our theory and the theory
of \citet{keribin2000consistent}. Furthermore, if we observe the
results of the $\epsilon$-BIC with $\epsilon=0.02$ and the $\nu$-BIC
with $\nu=3$, we note that the outcomes are either exactly the same
as or very close to those of the BIC for all $n$, due to the fact
that, numerically, the criteria are only infinitesimally different
for any practical size $n$. We, however, conclude that there is indeed
evidence that the infinitesimal variations from the BIC, via the $\epsilon$-BIC
and $\nu$-BIC constructions, are a harmless and practically costless
way to guarantee consistency in settings where such a guarantee may
not otherwise be afforded by the results of \citet{keribin2000consistent}.
Overall, these simulations lead to our recommendation, at least in
these simulation scenarios, that the $\epsilon$-BIC with $\epsilon=0.02$
and the $\nu$-BIC with $\nu=3$, together with $\alpha$ of Type
I or Type IV, provide good finite sample performance while enabling
the guarantees of Corollary \ref{cor:Main=00003D00003D000020Result}.

\section{Limitations}

\label{sec:Limitations}

\subsection{Penalty flexibility}

Theorem \ref{thm:GenericConsistency} provides a very general and
flexible approach to order selection for mixture models, and as pointed
out in Remarks \ref{rem:Choice_of_rate} and \ref{rem:Choice_of_alpha},
there are many possible choices for penalty calibration beyond our
proposed $\nu$-BIC and $\epsilon$-BIC proposals. A natural question
to ask is whether there are optimal choices for the penalty. In particular,
from a minimax perspective (cf. \citealp{gassiat2012consistent} and
\citealp[Ch. 4]{gassiat2018universal}), we would like to determine
if there is a smallest penalty that satisfies B1 and B2. Such a penalty
would provide model selection consistency while minimally modifying
the maximum likelihood as the axis for model choice.

Unfortunately, while such a minimal penalty is desirable, we can prove
that it does not exist. In particular, for fixed $\bar{k}\ge2$, let
$\mathscr{P}$ denote the collection of penalty arrays $\mathrm{pen}=\left\{ \mathrm{pen}_{k,n}:k\in\left[\bar{k}\right],n\in\mathbb{N}\right\} $
that satisfy B1 and B2. We endow $\mathscr{P}$ with the domination
order: $\mathrm{pen}^{\left(1\right)}\preceq\mathrm{pen}^{\left(2\right)}$
if and only if there exists an $n_{0}\in\mathbb{N}$ such that for
all $n\ge n_{0}$ and for all $k\in\left[\bar{k}\right]$, $\mathrm{pen}_{k,n}^{\left(1\right)}\le\mathrm{pen}_{k,n}^{\left(2\right)}$.
Call $\mathrm{pen}^{*}\in\mathscr{P}$ universally optimal if it is
the minimal element of $\left(\mathscr{P},\preceq\right)$; i.e.,
$\mathrm{pen}^{*}\preceq\mathrm{pen}$, for each $\mathrm{pen}\in\mathscr{P}$. 
\begin{prop}
\label{prop:no_min_pen}The partial order $\left(\mathscr{P},\preceq\right)$
has no minimal element. 
\end{prop}

More precisely, for every $\mathrm{pen}\in\mathscr{P}$, there exists
a $\widetilde{\mathrm{pen}}\in\mathscr{P}$ such that $\widetilde{\mathrm{pen}}_{k,n}<\mathrm{pen}_{k,n}$
for all $k\in\left[\bar{k}\right]$, and all sufficiently large $n$.
In particular, this shows that there is no optimal choice of penalty
rate and $\alpha$ function among the class of IC penalties characterised
by B1 and B2 alone, and any penalty that satisfies B1 and B2 admits
an asymptotically smaller penalty that also satisfies B1 and B2.

We may hope that if we have some fixed $\alpha:\mathbb{N}\to\mathbb{R}_{>0}$
strictly increasing, and we define our penalty in the form $\mathrm{pen}_{k,n}^{\alpha}=\alpha\left(k\right)r_{n}$,
where $\left(r_{n}\right)_{n}\subset\mathbb{R}_{>0}$, then there
may be some minimal choice of $\left(r_{n}\right)_{n}$. Unfortunately,
this also is not true as per the following result. 
\begin{prop}
For penalties of the form $\mathrm{pen}_{k,n}^{\alpha}=\alpha\left(k\right)r_{n}$,
if B1 and B2 hold then necessarily $r_{n}\to0$ and $\left(n/\log n\right)r_{n}\to\infty$,
as $n\to\infty$. Moreover, for each such $\left(r_{n}\right)_{n}$,
there exists another sequence $\left(\tilde{r}_{n}\right)_{n}$ such
that $\tilde{r}_{n}=o\left(r_{n}\right)$, while still satisfying
$\tilde{r}_{n}\to0$ and $\left(n/\log n\right)\tilde{r}_{n}\to\infty$,
as $n\to\infty$. 
\end{prop}

\begin{proof}
Let $a_{n}=\left(n/\log n\right)r_{n}\to\infty$ and set $\tilde{r}_{n}=r_{n}/\sqrt{a_{n}}$.
Then $\tilde{r}_{n}/r_{n}=a_{n}^{-1/2}\to0$, hence $\tilde{r}_{n}=o\left(r_{n}\right)$
and $\tilde{r}_{n}\to0$. Lastly, $\left(n/\log n\right)\tilde{r}_{n}=\sqrt{a_{n}}\to\infty$,
satisfying B2, since $\alpha\left(l\right)-\alpha\left(k\right)>0$
for each $l>k$, as required. 
\end{proof}
Indeed, the same penalty flexibility issue is not only a feature of
our consistency theorems but also of the PanIC approach of \citet{nguyen2024panic}
and \citet{westerhout2024asymptotic}, the preceding results of \citet{sin1996information},
and even the mixture model BIC consistency results of \citet{keribin2000consistent},
\citet{gassiat2012consistent}, and \citet{gassiat2018universal}.
This is also a problem in the finite sample model selection approaches
of \citet{birge2007minimal} and \citet{Massart2007Concentration}
(implemented in the mixture model literature in the works of \citealp{maugis2011non},
\citealp{MaugisRabusseauMichel2013Adaptive}, and \citealp{nguyen2022non}).
In the finite sample setting, some methods are available for empirical
calibration of penalties, such as via the so-called slope heuristic
(cf. \citealp{baudry2012slope} and \citealp{arlot2019minimal}).
Although there is some evidence that these methods work well in empirical
studies, they provide no technical or concrete guarantees in the mixture
model setting.

\subsection{Inefficiency of consistent model selection}

\label{subsec:Inefficiency-of-consistent}

In \citet{Yang2005aicbic}, it was demonstrated that, in the regression
context, there is a fundamental tension between the AIC and consistent
estimators such as the BIC. In particular, it was shown that in the
context of choosing between nested regression models, the chosen model
selected via the AIC is minimax optimal in the mean squared error
sense, while it is impossible for any fitted model chosen via a model
consistent estimator to achieve the same minimax optimality. In this
subsection, we will prove that the same phenomenon arises in the mixture
model order selection setting.

To proceed, we will consider a simplified version of the example in
Section \ref{subsec:Gaussian-mixture-models}. In particular, let
\[
x\mapsto\phi\left(x;\mu\right)=\frac{1}{\sqrt{2\pi}}\exp\left\{ -\frac{1}{2}\left(x-\mu\right)^{2}\right\} 
\]
be the density of the univariate normal law with unit variance $\mathrm{N}\left(\mu,1\right)$.
For fixed $b>0$ and $\underline{\pi}\in\left(0,1/2\right)$, let
${\cal F}={\cal M}_{1}\cup{\cal M}_{2}$, where ${\cal M}_{1}=\left\{ \phi\left(\cdot;\mu\right):\mu\in\left[-b,b\right]\right\} $,
and 
\[
{\cal M}_{2}=\left\{ \pi\phi\left(\cdot;\mu_{1}\right)+\left(1-\pi\right)\phi\left(\cdot;\mu_{2}\right):\pi\in\left[\underline{\pi},1-\underline{\pi}\right],\mu_{1},\mu_{2}\in\left[-b,b\right],\mu_{1}\le\mu_{2}\right\} .
\]

Let $\mathbf{X}_{n}$ consist of IID replicates from the law with
density (with respect to the Lebesgue measure) $f_{0}\in{\cal F}$.
As in Section \ref{subsec:Mixture-of-regression}, we let $\hat{f}_{k,n}\in\arg\max_{f\in{\cal M}_{k}}P_{n}\log f$
for $k\in\left\{ 1,2\right\} $, and with $f\mapsto\ell_{n}\left(f\right)=-n^{-1}\sum_{i=1}^{n}\log f\left(X_{i}\right)$,
we write 
\[
\hat{k}_{n}^{\mathrm{AIC}}=\min\underset{k\in\left\{ 1,2\right\} }{\arg\min}\left\{ \ell_{n}\left(\hat{f}_{k,n}\right)+\mathrm{pen}_{k,n}^{\mathrm{AIC}}\right\} ,\hat{f}_{n}^{\mathrm{AIC}}=\hat{f}_{\hat{k}_{n}^{\mathrm{AIC}},n}.
\]
Here, we define AIC-like penalties via $\mathrm{pen}_{k,n}^{\mathrm{AIC}}=a\mathrm{dim}\left(\mathbb{S}_{k}\right)n^{-1}$,
where $a>0$ is sufficiently large, and $\mathrm{dim}\left(\mathbb{S}_{1}\right)=1$
and $\mathrm{dim}\left(\mathbb{S}_{2}\right)=3$, in this case. As
per the usual treatment on the topic (see, e.g., \citealp[Ch. 15]{Wainwright2019hds}),
we define the squared-Hellinger minimax risk as 
\[
\mathscr{R}_{n}^{*}=\inf_{\tilde{f}_{n}}\sup_{f_{0}\in{\cal F}}\mathrm{E}_{f_{0}}\left\{ \mathfrak{h}^{2}\left(\tilde{f}_{n},f_{0}\right)\right\} ,
\]
where $\mathrm{E}_{f_{0}}$ denotes the expectation over $\mathbf{X}_{n}$
with respect to the law with density $f_{0}$. The infimum is taken
over the class of all measurable estimators $\tilde{f}_{n}$. The
following result states that the AIC-like estimator $\hat{f}_{n}^{\mathrm{AIC}}$
achieves the minimax risk. 
\begin{prop}
\label{prop:AIC-minimax}For each sufficiently large $a>0$, there
exist constants $0<c<C<\infty$ depending on $a$, $b$, and $\underline{\pi}$
such that, for all sufficiently large $n$, 
\[
\frac{c}{n}\le\mathscr{R}_{n}^{*}\le\sup_{f_{0}\in{\cal F}}\mathrm{E}_{f_{0}}\left\{ \mathfrak{h}^{2}\left(\hat{f}_{n}^{\mathrm{AIC}},f_{0}\right)\right\} \le\frac{C}{n}.
\]
\end{prop}

In contrast to the AIC-like penalties, we note that the BIC or indeed
any penalty that satisfies our Assumption B2 or Assumption (C1) of
\citet{keribin2000consistent} or (N-B2) of \citet{nguyen2024panic}
will verify the condition 
\begin{equation}
\Delta_{n}\downarrow0,\ n\Delta_{n}\underset{n\to\infty}{\longrightarrow}\infty,\label{eq:Delta_assumption}
\end{equation}
where $\Delta_{n}=\mathrm{pen}_{2,n}-\mathrm{pen}_{1,n}$. With $\tilde{f}_{n}=\hat{f}_{\hat{k}_{n},n}$,
where $\hat{k}_{n}=\arg\min_{k\in\left\{ 1,2\right\} }\left\{ \ell_{n}\left(\hat{f}_{k,n}\right)+\mathrm{pen}_{k,n}\right\} $,
we have the following result. 
\begin{prop}
\label{prop:consistent_not_minimax} If \eqref{eq:Delta_assumption}
holds then there exists a constant $c>0$ depending only on $b$ and
$\underline{\pi}$, such that for all sufficiently large $n$, 
\[
\sup_{f_{0}\in{\cal F}}\mathrm{E}_{f_{0}}\left\{ \mathfrak{h}^{2}\left(\tilde{f}_{n},f_{0}\right)\right\} \ge c\Delta_{n}.
\]
\end{prop}

Proposition \ref{prop:consistent_not_minimax} implies that any selector
$\hat{k}_{n}$ that satisfies \eqref{eq:Delta_assumption} cannot
achieve the minimax Hellinger risk over ${\cal F}$. Indeed, from
Proposition \ref{prop:AIC-minimax}, we have that $\mathscr{R}_{n}^{*}\asymp n^{-1}$,
which, combined with Proposition \ref{prop:consistent_not_minimax},
yields 
\[
\frac{\sup_{f_{0}\in{\cal F}}\mathrm{E}_{f_{0}}\left\{ \mathfrak{h}^{2}\left(\tilde{f}_{n},f_{0}\right)\right\} }{\mathscr{R}_{n}^{*}}\gtrsim n\Delta_{n}\to\infty.
\]
Thus, the worst-case risk of $\tilde{f}_{n}$ is asymptotically larger
than the minimax risk by an unbounded factor. A natural follow-up
question is whether it is possible to construct a consistent selector
that does not verify condition \eqref{eq:Delta_assumption}. The following
result demonstrates that this is impossible in the current setting.
\begin{prop}
\label{prop:Delta_assumption_necessary} Suppose that $\hat{k}_{n}$
is order-consistent over ${\cal F}={\cal M}_{1}\cup{\cal M}_{2}$
in the sense that 
\[
\sup_{f_{0}\in{\cal M}_{1}}\mathrm{P}_{f_{0}}(\hat{k}_{n}=2)\to0\qquad\mathrm{and}\qquad\sup_{f_{0}\in{\cal M}_{2}\setminus{\cal M}_{1}}\mathrm{P}_{f_{0}}(\hat{k}_{n}=1)\to0.
\]
Then \eqref{eq:Delta_assumption} necessarily holds. 
\end{prop}

\section{Discussion}

\label{sec:Discussion}

The IC approach to model selection and order selection is ubiquitous
across many domains of statistical modeling, with particular popularity
and applicability in the context of finite mixture models. Among the
IC approaches, the BIC is often the most commonly used, and in the
mixture model setting where the maximum model size $\bar{k}$ is known,
its consistency has been proved in \citet{keribin2000consistent}.
In this work, we dramatically relax the assumptions made by \citet{keribin2000consistent}
via minor modifications of the BIC, to produce the $\nu$-BIC and
$\epsilon$-BIC. These ICs minimally modify the penalty term of the
BIC but allow for much broader applicability, notably being applicable
without any differentiability and requiring substantially weaker regularity
than high-order differentiability and moment conditions. Via example
applications, we show that our novel ICs are consistent estimators
in the popular setting of Gaussian mixture models, and also in settings
that fall outside the scope of \citet{keribin2000consistent}, including
non-differentiable Laplace mixture models, heavy-tailed $t$-mixture
models, and mixtures of regression models. In addition, we provide
a complementary result under misspecification: when $f_{0}$ need
not belong to any ${\cal M}_{k}^{\phi}$, any IC satisfying B1 will
eventually select an order whose best-fitting mixture model is Kullback--Leibler
optimal among the candidate orders in $[\bar{k}]$ (Proposition~\ref{prop:Misspec_convergence}).

As we have discussed in Section \ref{sec:Introduction}, for suitable
and reasonable choices of $\nu$ and $\epsilon$, the respective $\nu$-BIC
and $\epsilon$-BIC are numerically indistinguishable from the BIC
in practical settings. We accompany this observation with numerical
simulations in Section~\ref{sec:Example-applications}, which compare
the $\epsilon$-BIC and $\nu$-BIC against alternatives such as the
AIC, BIC, and PanIC in scenarios that exhibit the pathologies highlighted
in our technical examples. In these experiments, we observe that choices
such as $\epsilon=0.02$ and $\nu=3$ are, for all practical values
of $n$, effectively the same as the BIC numerically, while still
inheriting the theoretical guarantees of Corollary~\ref{cor:Main=00003D00003D000020Result}
in settings where the hypotheses of \citet{keribin2000consistent}
are not available. More generally, the simulation study provides some
empirical guidance on the practical calibration of $\alpha$ and on
the behaviour of competing criteria in difficult regimes.

Given the potentially small numerical difference between the $\nu$-BIC
and $\epsilon$-BIC and the BIC, we do not seek to push the use of
either criterion over the BIC, which many practitioners are already
accustomed to and use regularly. However, we wish to position our
results as technical tools for explaining the correctness and effectiveness
of the BIC, even when the stringent assumptions of \citet{keribin2000consistent}
are not met. For example, one may use our theory to justify the use
of the BIC for mixtures with non-differentiable components $\phi$,
such as triangular mixtures \citep{karlis2008polygonal,nguyen2016maximum}
and Laplace-based mixture models \citep{franczak2013mixtures,song2014robust,azam2020multivariate},
as well as mixtures with components $\phi$ with limited regularity,
such as $t$-distribution based models \citep{peel2000robust,lin2007robust,lo2012flexible,ForbesWraith2014,yao2014robust}.
In the misspecified case, Proposition~\ref{prop:Misspec_convergence}
further indicates that, under mild conditions, common ICs (including
the AIC and the BIC) still target Kullback--Leibler optimal approximating
models, albeit without a guarantee of parsimonious order selection.

Indeed, via Theorem \ref{thm:GenericConsistency}, it is possible
to construct multitudes of ICs that exhibit consistency in mixture
model settings, but the finite sample performance of any such penalty
cannot be deduced from our general consistency theory alone. In particular,
as formalised in Section~\ref{subsec:Inefficiency-of-consistent},
there are two structural limitations that consistency results do not
resolve. First, under the natural domination order, there is no universally
minimal penalty array satisfying B1 and B2, so consistency does not
single out an optimal BIC-scale penalty calibration. Second, we establish
a tension between order consistency and minimax optimality in mixtures
like that of \citet{Yang2005aicbic}: in a simple Gaussian mixture
family, an AIC-like criterion achieves the parametric minimax Hellinger
risk, whereas any order-consistent criterion must be minimax-inefficient
by an unbounded factor, typically logarithmic for BIC-scale penalties.
To the best of our knowledge, this AIC/BIC tension has not previously
been stated explicitly in the finite-mixture order selection setting.
In the finite sample setting, some methods are available for empirical
calibration of penalties, such as via the so-called slope heuristic
(cf. \citealp{baudry2012slope} and \citealp{arlot2019minimal}),
although such methods provide no general guarantees in the mixture
model setting.

Although the assumptions of Theorem \ref{thm:GenericConsistency}
are weaker than those of \citet{keribin2000consistent}, it is still
possible to deduce consistent model selection under further relaxations
of conditions. For example, even without continuity of the component
density $\phi$ or independence of the data $\mathbf{X}_{n}$, or
when the loss function is not necessarily the negative log-density,
we can deduce consistency using the theory of \citet{westerhout2024asymptotic},
at the expense of increasing the rate of the penalisation term to
$\tilde{O}\left(n^{-1/2}\right)$. An immediate direction of study
is to understand whether further relaxations of the assumptions are
possible without such a pronounced increase in the penalisation rate.
Another natural direction, motivated by truth-changing and locally
misspecified frameworks in modern model selection (for example the
focused information criterion literature of \citealp{claeskensHjort2003FIC,lohmeyerEtAl2019FICVAR,pandhareRamanathan2020FICGLMTS,pandhareRamanathan2020RobustFIC}
and related high-dimensional settings such as \citealp{gaoCarroll2017DataIntegration}),
is to extend our results to triangular-array regimes where the effective
data-generating mechanism, or the KL-optimal order, may vary with
$n$. We expect such extensions to be possible, but they would require
triangular-array analogues of the empirical process arguments underpinning
our uniform convergence and overfitting rate results. In addition,
under misspecification, Proposition~\ref{prop:Misspec_convergence}
only ensures that $\hat{k}_{n}$ eventually lies in the set of KL-minimising
orders, and we are not aware of general results proving parsimonious
order selection under misspecification for BIC-scale penalties. It
would therefore be of interest to understand whether parsimonious
selection is achievable under misspecification without moving to larger
penalties, or whether such a limitation is inherent.

Another immediate extension to our current work is to obtain the corresponding
regularity conditions for consistent order selection of mixture of
experts models (cf. \citealp{jacobs1991adaptive}, \citealp{yuksel2012twenty},
and \citealp{nguyen2018practical}), which extend upon the finite
mixture models by also parameterising the mixing coefficients $\pi_{1},\dots,\pi_{k}$.
Such an extension would add to the growing body of theoretical results
regarding model selection and model uncertainty in mixture of experts
models, including the results of \citet{khalili2010new}, \citet{nguyen2022non},
\citet{nguyen2023non}, \citet{nguyen2023demystifying}, \citet{nguyen_general_2024},
\citet{nguyen_bayesian_2024}, \citet{nguyen_towards_2024}, \citet{thai_model_2025},
and \citet{westerhout2024asymptotic}, for example. Lastly, a limitation
of our current results is the requirement that some maximum order
$\bar{k}$ be specified. Indeed such an assumption can be omitted
at the great expense of strong control of local bracketing entropy
of normalisations of the mixture model classes, as in \citet{gassiat2012consistent}
and \citet[Sec. 4.3]{gassiat2018universal}. It remains open as to
whether weaker assumptions are available, even for penalties with
the larger rate $\tilde{O}\left(n^{-1/2}\right)$. 

\section*{Appendix A}

\subsection*{Proof of Proposition \ref{prop:MainTechProp}}

Let $P$ denote the probability measure with density $f_{0}$ with
respect to $\mathfrak{m}$, and let $P_{n}=n^{-1}\sum_{i=1}^{n}\delta_{X_{i}}$.
Define the measure $\mathfrak{m}_{0}$ by 
\[
\mathfrak{m}_{0}\left(\mathrm{d}x\right)=\mathbf{1}\left\{ f_{0}\left(x\right)>0\right\} \mathfrak{m}\left(\mathrm{d}x\right).
\]
Following \citet{van-de-Geer:2000aa}, write $\bar{f}=\left(f+f_{0}\right)/2$
and 
\[
\bar{{\cal F}}=\left\{ \bar{f}:f\in{\cal F}\right\} ,\qquad\bar{{\cal F}}^{1/2}=\left\{ \bar{f}^{1/2}:\bar{f}\in\bar{{\cal F}}\right\} .
\]
Let $\hat{f}_{n}:\Omega\to{\cal F}$ denote a maximum likelihood estimator:
\[
\hat{f}_{n}\in\arg\max_{f\in{\cal F}}P_{n}\log f.
\]

For any $0<\delta\le1$, define the local class 
\[
\bar{{\cal F}}^{1/2}\left(\delta\right)=\left\{ \bar{f}^{1/2}:\bar{f}=\left(f+f_{0}\right)/2,\ f\in{\cal F},\ \mathfrak{h}\left(f,f_{0}\right)\le\delta\right\} ,
\]
and the associated local bracketing entropy integral in ${\cal L}_{2}\left(\mathfrak{m}_{0}\right)$:
\begin{equation}
J_{\left[\right]}\left(\delta,\bar{{\cal F}}^{1/2}\left(\delta\right),{\cal L}_{2}\left(\mathfrak{m}_{0}\right)\right)=\delta\vee\int_{0}^{\delta}\sqrt{H_{\left[\right]}\left(u,\bar{{\cal F}}^{1/2}\left(\delta\right),{\cal L}_{2}\left(\mathfrak{m}_{0}\right)\right)}\mathrm{d}u.\label{eq:LocalEntropy_mu0}
\end{equation}

By \citet[Cor.~7.5]{van-de-Geer:2000aa}, if there exists a function
$\Psi:\mathbb{R}_{>0}\to\mathbb{R}_{>0}$ such that 
\[
\Psi\left(\delta\right)\ge J_{\left[\right]}\left(\delta,\bar{{\cal F}}^{1/2}\left(\delta\right),{\cal L}_{2}\left(\mathfrak{m}_{0}\right)\right),
\]
where $\Psi\left(\delta\right)/\delta^{2}$ is non-increasing in $\delta$,
then there exists $c_{1}>0$ such that 
\[
\mathrm{P}\left(\int_{\mathbb{X}}\log\frac{\hat{f}_{n}}{f_{0}}\mathrm{d}P_{n}\ge\delta^{2}\right)\le c_{1}\exp\left\{ -\frac{n\delta^{2}}{c_{1}^{2}}\right\} ,
\]
for every $\delta\ge\delta_{n}$, where $\delta_{n}$ satisfies $\sqrt{n}\delta_{n}^{2}\ge c\Psi\left(\delta_{n}\right)$
for a universal constant $c>0$.

It therefore suffices to show that our hypothesis $\Psi\left(\delta\right)\ge J\left(\delta\right)$,
where $J$ is defined in \eqref{eq:entropyintegral}, implies 
\[
\Psi\left(\delta\right)\ge J_{\left[\right]}\left(\delta,\bar{{\cal F}}^{1/2}\left(\delta\right),{\cal L}_{2}\left(\mathfrak{m}_{0}\right)\right).
\]

We first relate the bracketing numbers of $\bar{{\cal F}}^{1/2}$
in ${\cal L}_{2}\left(\mathfrak{m}_{0}\right)$ to those of ${\cal F}$
in ${\cal L}_{1}\left(\mathfrak{m}\right)$. 
\begin{lem}
If ${\cal F}$ is a space of probability densities on $\mathbb{X}$,
then for each $\delta>0$, 
\[
N_{\left[\right]}\left(\delta,\bar{{\cal F}}^{1/2},{\cal L}_{2}\left(\mathfrak{m}_{0}\right)\right)\le N_{\left[\right]}\left(2\delta^{2},{\cal F},{\cal L}_{1}\left(\mathfrak{m}\right)\right).
\]
\end{lem}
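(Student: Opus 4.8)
The plan is to turn any $\mathcal{L}_{1}\left(\mathfrak{m}\right)$-bracketing of ${\cal F}$ at scale $2\delta^{2}$ into an $\mathcal{L}_{2}\left(\mathfrak{m}\right)$-bracketing of $\bar{{\cal F}}^{1/2}$ at scale $\delta$ of the same cardinality, which yields the stated inequality between bracketing numbers immediately (if the right-hand side is infinite there is nothing to prove). First I would fix $N=N_{\left[\right]}\left(2\delta^{2},{\cal F},{\cal L}_{1}\left(\mathfrak{m}\right)\right)$ and brackets $\bigl(\bigl[f_{j}^{\mathrm{L}},f_{j}^{\mathrm{U}}\bigr]\bigr)_{j\in[N]}$ with $\bigl\|f_{j}^{\mathrm{U}}-f_{j}^{\mathrm{L}}\bigr\|_{{\cal L}_{1}\left(\mathfrak{m}\right)}\le2\delta^{2}$ such that every $f\in{\cal F}$ satisfies $f_{j}^{\mathrm{L}}\le f\le f_{j}^{\mathrm{U}}$ for some $j$. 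Since every $f\in{\cal F}$ is a (nonnegative) density, replacing each lower bracket $f_{j}^{\mathrm{L}}$ by $f_{j}^{\mathrm{L}}\vee0$ preserves the bracketing property and cannot increase any $\mathcal{L}_{1}\left(\mathfrak{m}\right)$-width, so I may assume $0\le f_{j}^{\mathrm{L}}\le f_{j}^{\mathrm{U}}$ throughout; note also that each $f_{j}^{\mathrm{U}}-f_{j}^{\mathrm{L}}\ge0$ is $\mathfrak{m}$-integrable.

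Next I would push these brackets through the two operations defining $\bar{{\cal F}}^{1/2}$. Set $\bar{f}_{j}^{\mathrm{L}}=\bigl(f_{j}^{\mathrm{L}}+f_{0}\bigr)/2$ and $\bar{f}_{j}^{\mathrm{U}}=\bigl(f_{j}^{\mathrm{U}}+f_{0}\bigr)/2$; these are nonnegative with $\bar{f}_{j}^{\mathrm{L}}\le\bar{f}_{j}^{\mathrm{U}}$ pointwise. If $f\in{\cal F}$ lies in the $j$-th bracket, then $\bar{f}=\bigl(f+f_{0}\bigr)/2$ satisfies $\bar{f}_{j}^{\mathrm{L}}\le\bar{f}\le\bar{f}_{j}^{\mathrm{U}}$, and since $t\mapsto t^{1/2}$ is nondecreasing on $[0,\infty)$ we get $\bigl(\bar{f}_{j}^{\mathrm{L}}\bigr)^{1/2}\le\bar{f}^{1/2}\le\bigl(\bar{f}_{j}^{\mathrm{U}}\bigr)^{1/2}$. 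Hence $\bigl(\bigl[(\bar{f}_{j}^{\mathrm{L}})^{1/2},(\bar{f}_{j}^{\mathrm{U}})^{1/2}\bigr]\bigr)_{j\in[N]}$ is a bracketing of $\bar{{\cal F}}^{1/2}$, and it remains only to bound its $\mathcal{L}_{2}\left(\mathfrak{m}\right)$-widths.

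The one computation in the argument uses the elementary inequality $\bigl(\sqrt{a}-\sqrt{b}\bigr)^{2}\le a-b$ valid for all $a\ge b\ge0$ (expand and use $\sqrt{ab}\ge b$). Applying it pointwise with $a=\bar{f}_{j}^{\mathrm{U}}$, $b=\bar{f}_{j}^{\mathrm{L}}$ and integrating,
\[
\bigl\|(\bar{f}_{j}^{\mathrm{U}})^{1/2}-(\bar{f}_{j}^{\mathrm{L}})^{1/2}\bigr\|_{{\cal L}_{2}\left(\mathfrak{m}\right)}^{2}=\int_{\mathbb{X}}\bigl((\bar{f}_{j}^{\mathrm{U}})^{1/2}-(\bar{f}_{j}^{\mathrm{L}})^{1/2}\bigr)^{2}\text{d}\mathfrak{m}\le\int_{\mathbb{X}}\bigl(\bar{f}_{j}^{\mathrm{U}}-\bar{f}_{j}^{\mathrm{L}}\bigr)\text{d}\mathfrak{m}=\tfrac{1}{2}\bigl\|f_{j}^{\mathrm{U}}-f_{j}^{\mathrm{L}}\bigr\|_{{\cal L}_{1}\left(\mathfrak{m}\right)}\le\delta^{2},
\]
where the middle equality uses $\bar{f}_{j}^{\mathrm{U}}-\bar{f}_{j}^{\mathrm{L}}=\tfrac{1}{2}\bigl(f_{j}^{\mathrm{U}}-f_{j}^{\mathrm{L}}\bigr)\ge0$ and its $\mathfrak{m}$-integrability. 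Taking square roots, each of the $N$ constructed brackets has $\mathcal{L}_{2}\left(\mathfrak{m}\right)$-width at most $\delta$, so they form a $\delta$-bracketing of $\bar{{\cal F}}^{1/2}$ and $N_{\left[\right]}\bigl(\delta,\bar{{\cal F}}^{1/2},{\cal L}_{2}\left(\mathfrak{m}\right)\bigr)\le N=N_{\left[\right]}\bigl(2\delta^{2},{\cal F},{\cal L}_{1}\left(\mathfrak{m}\right)\bigr)$. I do not anticipate a genuine obstacle here—this is the standard bracketing-transfer argument underlying \citet[Eq. 7.14]{van-de-Geer:2000aa}; the only points requiring care are arranging nonnegativity of the lower brackets before taking square roots, and using the sharp bound $\bigl(\sqrt{a}-\sqrt{b}\bigr)^{2}\le a-b$ rather than the lossy $\bigl|\sqrt{a}-\sqrt{b}\bigr|\le\sqrt{|a-b|}$, which is exactly what matches the factor $2\delta^{2}$ to $\delta$.
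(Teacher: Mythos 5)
Your proof is correct and follows essentially the same route as the paper: transfer the $2\delta^{2}$ $\mathcal{L}_{1}(\mathfrak{m})$-brackets through the averaging map $f\mapsto(f+f_{0})/2$ and the square root, using the pointwise bound $(\sqrt{a}-\sqrt{b})^{2}\le a-b$, which is exactly the Hellinger-versus-$\mathcal{L}_{1}$ comparison the paper cites from \citet[Lem. B.1]{ghosal2017fundamentals}. The only difference is cosmetic: you prove the elementary inequality directly and spell out the construction of the transformed brackets (including the nonnegativity truncation of the lower brackets), steps the paper leaves implicit.
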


\begin{proof}
Let $\delta>0$ and let $\left(\left[f_{j}^{\mathrm{L}},f_{j}^{\mathrm{U}}\right]\right)_{j\in\left[N\right]}$
be a $2\delta^{2}$-bracketing of ${\cal F}$ with respect to the
${\cal L}_{1}\left(\mathfrak{m}\right)$ norm. For each $j$, define
$\bar{f}_{j}^{\mathrm{L}}=\left(f_{j}^{\mathrm{L}}+f_{0}\right)/2$
and $\bar{f}_{j}^{\mathrm{U}}=\left(f_{j}^{\mathrm{U}}+f_{0}\right)/2$.
Then for $\bar{f}=\left(f+f_{0}\right)/2$, we have $\bar{f}_{j}^{\mathrm{L}}\le\bar{f}\le\bar{f}_{j}^{\mathrm{U}}$
whenever $f_{j}^{\mathrm{L}}\le f\le f_{j}^{\mathrm{U}}$.

Moreover, for $a,b\ge0$, $\left(\sqrt{a}-\sqrt{b}\right)^{2}\le\left|a-b\right|$.
Hence, 
\begin{align*}
\left\Vert \left(\bar{f}_{j}^{\mathrm{U}}\right)^{1/2}-\left(\bar{f}_{j}^{\mathrm{L}}\right)^{1/2}\right\Vert _{{\cal L}_{2}\left(\mathfrak{m}_{0}\right)}^{2} & =\int_{\mathbb{X}}\left\{ \left(\bar{f}_{j}^{\mathrm{U}}\right)^{1/2}-\left(\bar{f}_{j}^{\mathrm{L}}\right)^{1/2}\right\} ^{2}\mathrm{d}\mathfrak{m}_{0}\\
 & \le\int_{\mathbb{X}}\left|\bar{f}_{j}^{\mathrm{U}}-\bar{f}_{j}^{\mathrm{L}}\right|\mathrm{d}\mathfrak{m}_{0}\le\int_{\mathbb{X}}\left|\bar{f}_{j}^{\mathrm{U}}-\bar{f}_{j}^{\mathrm{L}}\right|\mathrm{d}\mathfrak{m}\\
 & =\frac{1}{2}\int_{\mathbb{X}}\left|f_{j}^{\mathrm{U}}-f_{j}^{\mathrm{L}}\right|\mathrm{d}\mathfrak{m}\le\delta^{2}.
\end{align*}
Thus, $\left[\left(\bar{f}_{j}^{\mathrm{L}}\right)^{1/2},\left(\bar{f}_{j}^{\mathrm{U}}\right)^{1/2}\right]$
is a $\delta$-bracket for $\bar{{\cal F}}^{1/2}$ in ${\cal L}_{2}\left(\mathfrak{m}_{0}\right)$,
and the claim follows. 
\end{proof}
By the preceding lemma, for each $u>0$ and $0<\delta\le1$, 
\[
H_{\left[\right]}\left(u,\bar{{\cal F}}^{1/2}\left(\delta\right),{\cal L}_{2}\left(\mathfrak{m}_{0}\right)\right)\le H_{\left[\right]}\left(2u^{2},{\cal F}\left(\delta\right),{\cal L}_{1}\left(\mathfrak{m}\right)\right).
\]
Since the bracketing entropy is non-increasing in its first argument,
we have 
\[
H_{\left[\right]}\left(2u^{2},{\cal F}\left(\delta\right),{\cal L}_{1}\left(\mathfrak{m}\right)\right)\le H_{\left[\right]}\left(u^{2},{\cal F}\left(\delta\right),{\cal L}_{1}\left(\mathfrak{m}\right)\right).
\]
Therefore, 
\begin{align*}
J_{\left[\right]}\left(\delta,\bar{{\cal F}}^{1/2}\left(\delta\right),{\cal L}_{2}\left(\mathfrak{m}_{0}\right)\right) & =\delta\vee\int_{0}^{\delta}\sqrt{H_{\left[\right]}\left(u,\bar{{\cal F}}^{1/2}\left(\delta\right),{\cal L}_{2}\left(\mathfrak{m}_{0}\right)\right)}\mathrm{d}u\\
 & \le\delta\vee\int_{0}^{\delta}\sqrt{H_{\left[\right]}\left(u^{2},{\cal F}\left(\delta\right),{\cal L}_{1}\left(\mathfrak{m}\right)\right)}\mathrm{d}u=J\left(\delta\right).
\end{align*}
Thus, if $\Psi\left(\delta\right)\ge J\left(\delta\right)$, then
also $\Psi\left(\delta\right)\ge J_{\left[\right]}\left(\delta,\bar{{\cal F}}^{1/2}\left(\delta\right),{\cal L}_{2}\left(\mathfrak{m}_{0}\right)\right)$,
and Proposition \ref{prop:MainTechProp} follows from \citet[Cor.~7.5]{van-de-Geer:2000aa}. 

\subsection*{Proof of Lemma \ref{lem:Convergenceofmin}}

For each $k\in\left[\bar{k}\right]$, let ${\cal N}_{k}^{\phi}=\left\{ -\log f:f\in{\cal M}_{k}^{\phi}\right\} $.
We firstly seek to show that ${\cal N}_{k}^{\phi}$ are GC classes.
Indeed, if we write $\mathbb{S}_{k}=\left\{ \left(\pi_{1},\dots,\pi_{k}\right)\in\left[0,1\right]^{k}:\sum_{z=1}^{k}\pi_{z}=1\right\} \times\mathbb{T}^{k}\subset\mathbb{R}^{\left(m+1\right)k}$,
then we can characterise ${\cal N}_{k}^{\phi}$ as 
\[
{\cal N}_{k}^{\phi}=\left\{ -\log f\left(\cdot;\psi_{k}\right):\psi_{k}\in\mathbb{S}_{k}\right\} ,
\]
where we index by the compact Euclidean space $\mathbb{S}_{k}$ instead
of ${\cal M}_{k}^{\phi}$. Moreover, by A2 and A3 the map $\psi_{k}\mapsto-\log f(x;\psi_{k})$
is continuous for each fixed $x$ (since $f(x;\psi_{k})>0$), and
$\mathbb{S}_{k}$ is compact. Hence, once an envelope in ${\cal L}_{1}(P)$
is available, the bracketing numbers $N_{[]}\!\left(\delta,{\cal N}_{k}^{\phi},{\cal L}_{1}(P)\right)$
are finite for every $\delta>0$ by the second part of Lemma~\ref{lem:GCclass}.
We can write $\ell_{k,n}\left(\psi_{k}\right)=-P_{n}\log f\left(\cdot;\psi_{k}\right)$
and $\ell_{k}\left(\psi_{k}\right)=-P\log f\left(\cdot;\psi_{k}\right)$,
and by A1 and Lemma \ref{lem:GCclass} 
\[
\left\Vert P_{n}-P\right\Vert _{{\cal N}_{k}^{\phi}}=\max_{\psi_{k}\in\mathbb{S}_{k}}\left|\ell_{k,n}\left(\psi_{k}\right)-\ell_{k}\left(\psi_{k}\right)\right|\stackrel[n\to\infty]{\mathrm{a.s.}}{\longrightarrow}0
\]
if there exists a $G\in{\cal L}_{1}\left(P\right)$, such that $\left|\log f\left(x;\psi_{k}\right)\right|\le G\left(x\right)$,
for each $x\in\mathbb{X}$. We prove the following useful bound from
\citet{atienza2007consistency}. 
\begin{lem}
\label{lem:Atienza}Under A2 and A3, for each $f\left(x;\psi_{k}\right)=\sum_{z=1}^{k}\pi_{z}\phi\left(x;\theta_{z}\right)\in{\cal M}_{k}^{\phi}$
and $x\in\mathbb{X}$, 
\[
\left|\log f\left(x;\psi_{k}\right)\right|\le\sum_{z=1}^{k}\left|\log\phi\left(x;\theta_{z}\right)\right|\le k\max_{\theta\in\mathbb{T}}\left|\log\phi\left(x;\theta\right)\right|.
\]
\end{lem}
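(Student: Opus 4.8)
The plan is to establish the two inequalities separately, with the first being the substantive one and the second following immediately. Fix $x \in \mathbb{X}$ and $\psi_k = (\pi_1,\dots,\pi_k,\theta_1,\dots,\theta_k) \in \mathbb{S}_k$, and write $f(x;\psi_k) = \sum_{z=1}^k \pi_z \phi(x;\theta_z)$. By A2, each $\phi(x;\theta_z) > 0$, and the mixing weights satisfy $\pi_z \in [0,1]$ with $\sum_z \pi_z = 1$, so $f(x;\psi_k) > 0$ and $\log f(x;\psi_k)$ is well-defined and finite (finiteness of each $\log\phi(x;\theta_z)$ also uses A3 together with compactness of $\mathbb{T}$ from A2, ensuring $\phi(x;\theta_z)$ is bounded away from $0$ and $\infty$ pointwise in $x$).

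For the upper bound on $\log f(x;\psi_k)$, I would use that $f(x;\psi_k) = \sum_z \pi_z \phi(x;\theta_z) \le \max_z \phi(x;\theta_z) \le \prod_{z=1}^k \max\{1,\phi(x;\theta_z)\} \le \prod_{z=1}^k \max\{1,\phi(x;\theta_z)\}$; taking logs and using $\log\max\{1,t\} \le |\log t|$ gives $\log f(x;\psi_k) \le \sum_z |\log \phi(x;\theta_z)|$. For the lower bound, since at least one $\pi_z \ge 1/k$, pick such a $z^\ast$; then $f(x;\psi_k) \ge \pi_{z^\ast}\phi(x;\theta_{z^\ast}) \ge \phi(x;\theta_{z^\ast})/k$, and a cleaner route is simply $f(x;\psi_k) \ge \min_z \pi_z \phi(x;\theta_z)$ combined with $\prod_z \min\{1,\phi(x;\theta_z)\} \le \min_z \phi(x;\theta_z)$ — actually the slickest approach, following \citet{atienza2007consistency}, is to bound $\prod_{z=1}^k \min\{1,\phi(x;\theta_z)\} \le f(x;\psi_k) \le \prod_{z=1}^k \max\{1,\phi(x;\theta_z)\}$: the right inequality is as above, and the left holds because $f(x;\psi_k) \ge \max_z \pi_z\phi(x;\theta_z) \ge (1/k)\max_z \phi(x;\theta_z) \ge \prod_z \min\{1,\phi(x;\theta_z)\}$ once one observes $\prod_z \min\{1,\phi(x;\theta_z)\} \le \min_z \min\{1,\phi(x;\theta_z)\} \le \max_z\phi(x;\theta_z)$ — wait, this needs $k$ in it; the honest bound is $\prod_z \min\{1,\phi(x;\theta_z)\} \le 1 \cdot \min_{z:\phi<1}\phi(x;\theta_z)$, which combined with $f \ge \min_z\pi_z\phi(x;\theta_z) \ge$ something. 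I would therefore present it as: from $\prod_z \min\{1,\phi(x;\theta_z)\} \le f(x;\psi_k) \le \prod_z \max\{1,\phi(x;\theta_z)\}$, take logarithms to get $\sum_z \log\min\{1,\phi\} \le \log f \le \sum_z \log\max\{1,\phi\}$, hence $|\log f(x;\psi_k)| \le \sum_z \max\{|\log\min\{1,\phi\}|, |\log\max\{1,\phi\}|\} = \sum_z |\log\phi(x;\theta_z)|$, using that for $t>0$ exactly one of $\log\min\{1,t\}, \log\max\{1,t\}$ is nonzero and equals $-|\log t|$ or $|\log t|$ respectively.

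The second inequality $\sum_{z=1}^k |\log\phi(x;\theta_z)| \le k\max_{\theta\in\mathbb{T}}|\log\phi(x;\theta)|$ is then immediate: each term $|\log\phi(x;\theta_z)|$ is at most the maximum over $\theta\in\mathbb{T}$ (which is attained since, by A3, $\theta\mapsto\phi(x;\theta)$ is continuous on the compact set $\mathbb{T}$, and is strictly positive by A2, so $\theta\mapsto|\log\phi(x;\theta)|$ is continuous on a compact set), and there are $k$ terms. The main obstacle — really the only point requiring care — is getting the lower bound on $f(x;\psi_k)$ clean enough that the absolute value telescopes correctly; the key trick is the product sandwich $\prod_z\min\{1,\phi(x;\theta_z)\} \le f(x;\psi_k) \le \prod_z\max\{1,\phi(x;\theta_z)\}$, where the lower bound uses that $f(x;\psi_k) \ge \max_z\pi_z\phi(x;\theta_z) \ge k^{-1}\max_z\phi(x;\theta_z) \ge \prod_z\min\{1,\phi(x;\theta_z)\}$ once one checks the last step (if all $\phi(x;\theta_z)\ge 1$ the product is $1\le\max_z\phi$; otherwise the product is a product of terms $\le 1$, hence $\le\min_{z}\phi(x;\theta_z)\mathbf{1}[\phi(x;\theta_z)<1]\le\max_z\phi(x;\theta_z)$, and the factor $k^{-1}$ is absorbed harmlessly since we only need an inequality, though to be fully rigorous one notes $\prod_z\min\{1,\phi\}\le\min\{1,\phi(x;\theta_{z^\ast})\}\le\phi(x;\theta_{z^\ast})\le k f(x;\psi_k)$ for the $z^\ast$ achieving $\max_z\pi_z\ge 1/k$).
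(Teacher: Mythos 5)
Your overall strategy (the Atienza-style product sandwich $\prod_{z}\min\{1,\phi(x;\theta_{z})\}\le f(x;\psi_{k})\le\prod_{z}\max\{1,\phi(x;\theta_{z})\}$, then take logarithms) can be made to work, and your upper half and your extraction of $\left|\log f\right|\le\sum_{z}\left|\log\phi(x;\theta_{z})\right|$ from the sandwich are fine, as is the trivial second inequality. The genuine gap is in the lower half of the sandwich, which you never correctly establish. The step $\max_{z}\pi_{z}\phi(x;\theta_{z})\ge k^{-1}\max_{z}\phi(x;\theta_{z})$ is false: the component with the largest density value may carry arbitrarily small (even zero) weight, e.g.\ $k=2$, $\pi_{1}=1$, $\pi_{2}=0$, $\phi(x;\theta_{1})=0.1$, $\phi(x;\theta_{2})=100$. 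Your ``fully rigorous'' fallback, choosing $z^{\ast}$ with $\pi_{z^{\ast}}\ge1/k$, only yields $\prod_{z}\min\{1,\phi(x;\theta_{z})\}\le k\,f(x;\psi_{k})$; the factor $k$ is not ``absorbed harmlessly'' --- after taking logs it leaves an additive $\log k$ slack, so you have not proved the inequality as stated in the lemma (it would still give an integrable envelope for the downstream GC argument, but that is a weaker claim than the lemma).

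The fix is one line, and it is essentially the route the paper takes: since $f(x;\psi_{k})$ is a convex combination of the positive numbers $\phi(x;\theta_{z})$, you have $\min_{z}\phi(x;\theta_{z})\le f(x;\psi_{k})\le\max_{z}\phi(x;\theta_{z})$, and the needed lower bound follows because a product of factors each at most $1$ is at most any single factor, so $\prod_{z}\min\{1,\phi(x;\theta_{z})\}\le\min\{1,\phi(x;\theta_{z_{0}})\}\le\phi(x;\theta_{z_{0}})=\min_{z}\phi(x;\theta_{z})$, where $z_{0}$ attains the minimum. In fact, once you have the min/max sandwich you do not need the products at all: the paper simply notes
\[
\left|\log f\left(x;\psi_{k}\right)\right|\le\left|\log\min_{z}\phi\left(x;\theta_{z}\right)\right|\vee\left|\log\max_{z}\phi\left(x;\theta_{z}\right)\right|\le\sum_{z=1}^{k}\left|\log\phi\left(x;\theta_{z}\right)\right|\text{,}
\]
since each of the two quantities in the maximum is one of the nonnegative summands. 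Your weight-based manipulations ($\pi_{z^{\ast}}\ge1/k$, $f\ge\max_{z}\pi_{z}\phi_{z}$) are unnecessary and are precisely where the error enters.
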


\begin{proof}
Firstly observe that for every $x\in\mathbb{X}$, 
\[
\min_{z\in\left[k\right]}\phi\left(x;\theta_{z}\right)\le f\left(x;\psi_{k}\right)=\sum_{z=1}^{k}\pi_{z}\phi\left(x;\theta_{z}\right)\le\max_{z\in\left[k\right]}\phi\left(x;\theta_{z}\right)
\]
and thus 
\[
\log\left\{ \min_{z\in\left[k\right]}\phi\left(x;\theta_{z}\right)\right\} \le\log f\left(x;\psi_{k}\right)\le\log\left\{ \max_{z\in\left[k\right]}\phi\left(x;\theta_{z}\right)\right\} .
\]
Then, 
\begin{align*}
\left|\log f\left(x;\psi_{k}\right)\right| & \le\left|\log\left\{ \min_{z\in\left[k\right]}\phi\left(x;\theta_{z}\right)\right\} \right|\vee\left|\log\left\{ \max_{z\in\left[k\right]}\phi\left(x;\theta_{z}\right)\right\} \right|\\
 & \le\sum_{z=1}^{k}\left|\log\phi\left(x;\theta_{z}\right)\right|\le k\max_{\theta\in\mathbb{T}}\left|\log\phi\left(x;\theta\right)\right|,
\end{align*}
as required, where $\max_{\theta\in\mathbb{T}}\left|\log\phi\left(x;\theta\right)\right|$
exists by A2 and A3. 
\end{proof}
Using Lemma \ref{lem:Atienza}, we have $\left|\log f\left(x;\psi_{k}\right)\right|\le k\max_{\theta\in\mathbb{T}}\left|\log\phi\left(x;\theta\right)\right|$.
Then by A4, since we have $\max_{\theta\in\mathbb{T}}\left|\log\phi\left(x;\theta\right)\right|\le G_{1}\left(x\right)$
with $G_{1}\in{\cal L}_{1}\left(P\right)$, we can take $G\left(x\right)=kG_{1}\left(x\right)$,
as required. The conclusion then follows by the fact that the minimum
is Lipschitz with respect to the uniform norm (see, e.g., \citealp[A.4.4]{hinderer2016dynamic}),
i.e., 
\[
\left|\min_{\psi_{k}\in\mathbb{S}_{k}}\ell_{k,n}\left(\psi_{k}\right)-\min_{\psi_{k}\in\mathbb{S}_{k}}\ell_{k}\left(\psi_{k}\right)\right|\le\max_{\psi_{k}\in\mathbb{S}_{k}}\left|\ell_{k,n}\left(\psi_{k}\right)-\ell_{k}\left(\psi_{k}\right)\right|,
\]
where the minima exist by compactness of $\mathbb{S}_{k}$ and continuity
of $\psi_{k}\mapsto\ell_{k,n}\left(\psi_{k}\right)$ and $\psi_{k}\mapsto\ell_{k}\left(\psi_{k}\right)$
(by A2--A4). 

\subsection*{Proof of Lemma \ref{lem:Rateofmin}}

For each $k\ge k_{0}$, we firstly seek to bound $N_{\left[\right]}\left(\delta,{\cal M}_{k}^{\phi},{\cal L}_{1}\left(\mathfrak{m}\right)\right)$.
By A2 and A3, noting that each $f_{k}\in{\cal M}_{k}^{\phi}$ has
the form $f_{k}\left(\cdot;\psi_{k}\right)=\sum_{z=1}^{k}\pi_{z}\phi\left(\cdot;\theta_{z}\right)$,
Lemma \ref{lem:ParametricBracket} implies that if 
\[
\left|\sum_{z=1}^{k}\pi_{z}\phi\left(x;\theta_{z}\right)-\sum_{z=1}^{k}\pi_{z}^{*}\phi\left(x;\theta_{z}^{*}\right)\right|\le G\left(x\right)\left\Vert \psi_{k}-\psi_{k}^{*}\right\Vert ,
\]
for some $G\in{\cal L}_{1}\left(\mathfrak{m}\right)$ and all $\psi_{k},\psi_{k}^{*}\in\mathbb{S}_{k}$,
then 
\[
N_{\left[\right]}\left(\delta,{\cal M}_{k}^{\phi},{\cal L}_{1}\left(\mathfrak{m}\right)\right)\le K\left(\frac{\left\Vert G\right\Vert _{{\cal L}_{1}\left(\mathfrak{m}\right)}\mathrm{diam}\left(\mathbb{S}_{k}\right)}{\delta}\right)^{\left(m+1\right)k},
\]
for some constant $K>0$. Here $\mathrm{diam}\left(\mathbb{S}_{k}\right)<\infty$
since $\mathbb{S}_{k}$ is compact.

Observe that 
\begin{align*}
\left|\sum_{z=1}^{k}\pi_{z}\phi\left(x;\theta_{z}\right)-\sum_{z=1}^{k}\pi_{z}^{*}\phi\left(x;\theta_{z}^{*}\right)\right| & \le\sum_{z=1}^{k}\left|\pi_{z}\phi\left(x;\theta_{z}\right)-\pi_{z}^{*}\phi\left(x;\theta_{z}^{*}\right)\right|\\
 & =\sum_{z=1}^{k}\left|\pi_{z}\phi\left(x;\theta_{z}\right)-\pi_{z}\phi\left(x;\theta_{z}^{*}\right)+\pi_{z}\phi\left(x;\theta_{z}^{*}\right)-\pi_{z}^{*}\phi\left(x;\theta_{z}^{*}\right)\right|\\
 & \le\sum_{z=1}^{k}\left\{ \left|\pi_{z}\right|\left|\phi\left(x;\theta_{z}\right)-\phi\left(x;\theta_{z}^{*}\right)\right|+\left|\phi\left(x;\theta_{z}^{*}\right)\right|\left|\pi_{z}-\pi_{z}^{*}\right|\right\} \\
 & \le\sum_{z=1}^{k}\left\{ \left|\phi\left(x;\theta_{z}\right)-\phi\left(x;\theta_{z}^{*}\right)\right|+\max_{\theta\in\mathbb{T}}\left|\phi\left(x;\theta\right)\right|\left|\pi_{z}-\pi_{z}^{*}\right|\right\} ,
\end{align*}
where we note that $\left|\pi_{z}\right|\le1$ and the maximum exists
by A2 and A3. Then, by A5, we have $L_{\phi}$, such that 
\[
\left|\phi\left(x;\theta_{z}\right)-\phi\left(x;\theta_{z}^{*}\right)\right|\le L_{\phi}\left(x\right)\left\Vert \theta_{z}-\theta_{z}^{*}\right\Vert _{1}
\]
and since there exists a $G_{2}\in{\cal L}_{1}\left(\mathfrak{m}\right)$,
such that $L_{\phi}\left(x\right)+\max_{\theta\in\mathbb{T}}\left|\phi\left(x;\theta\right)\right|\le G_{2}\left(x\right)$,
for every $x\in\mathbb{X}$, it holds that 
\begin{align*}
\left|\sum_{z=1}^{k}\pi_{z}\phi\left(x;\theta_{z}\right)-\sum_{z=1}^{k}\pi_{z}^{*}\phi\left(x;\theta_{z}^{*}\right)\right| & \le\sum_{z=1}^{k}\left\{ L_{\phi}\left(x\right)\left\Vert \theta_{z}-\theta_{z}^{*}\right\Vert _{1}+\max_{\theta\in\mathbb{T}}\left|\phi\left(x;\theta\right)\right|\left|\pi_{z}-\pi_{z}^{*}\right|\right\} \\
 & \le G_{2}\left(x\right)\left\Vert \psi_{k}-\psi_{k}^{*}\right\Vert _{1}\\
 & \le\sqrt{\left(m+1\right)k}G_{2}\left(x\right)\left\Vert \psi_{k}-\psi_{k}^{*}\right\Vert .
\end{align*}
We can therefore take $G=\sqrt{\left(m+1\right)k}G_{2}$, as required.

Noting that ${\cal M}_{k}^{\phi}\left(\delta\right)=\left\{ f\in{\cal M}_{k}^{\phi}:\mathfrak{h}\left(f,f_{0}\right)\le\delta\right\} \subset{\cal M}_{k}^{\phi}$,
we can bound the entropy integral \eqref{eq:entropyintegral} as 
\begin{align*}
J_{\left[\right]}\left(\delta,{\cal M}_{k}^{\phi}\left(\delta\right),{\cal L}_{1}\left(\mathfrak{m}\right)\right) & \le J_{\left[\right]}\left(\delta,{\cal M}_{k}^{\phi},{\cal L}_{1}\left(\mathfrak{m}\right)\right)\\
 & =\delta\vee\int_{0}^{\delta}\sqrt{H_{\left[\right]}\left(u^{2},{\cal M}_{k}^{\phi},{\cal L}_{1}\left(\mathfrak{m}\right)\right)}\mathrm{d}u\\
 & \le\delta\vee\int_{0}^{\delta}\sqrt{\log\left\{ K\left(\frac{\left\Vert G\right\Vert _{{\cal L}_{1}\left(\mathfrak{m}\right)}\mathrm{diam}\left(\mathbb{S}_{k}\right)}{u^{2}}\right)^{\left(m+1\right)k}\right\} }\mathrm{d}u.
\end{align*}
Using the fact that for $a,b\ge0$, $\sqrt{a+b}\le\sqrt{a}+\sqrt{b}$,
we have 
\[
\sqrt{\log\left\{ K\left(\frac{\left\Vert G\right\Vert _{{\cal L}_{1}\left(\mathfrak{m}\right)}\mathrm{diam}\left(\mathbb{S}_{k}\right)}{u^{2}}\right)^{\left(m+1\right)k}\right\} }\le C_{1}+C_{2}\sqrt{\log\left(1/u\right)},
\]
where 
\[
C_{1}=\sqrt{\log K+\left(m+1\right)k\log\left\{ \left\Vert G\right\Vert _{{\cal L}_{1}\left(\mathfrak{m}\right)}\mathrm{diam}\left(\mathbb{S}_{k}\right)\right\} },\qquad C_{2}=\sqrt{2\left(m+1\right)k}.
\]

Jensen's inequality implies that 
\begin{align*}
\int_{0}^{\delta}\sqrt{\log\left(1/u\right)}\mathrm{d}u & \le\sqrt{\delta\int_{0}^{\delta}\log\left(1/u\right)\mathrm{d}u}\\
 & =\sqrt{\delta\left\{ \delta+\delta\log\left(1/\delta\right)\right\} }\le\delta+\delta\sqrt{\log\left(1/\delta\right)}.
\end{align*}
Thus, we have 
\begin{align*}
J\left(\delta\right) & \le\delta\vee\left[\left(C_{1}+C_{2}\right)\delta+C_{2}\delta\sqrt{\log\left(1/\delta\right)}\right]\\
 & \le C_{3}\delta+C_{2}\delta\sqrt{\log\left(1/\delta\right)},
\end{align*}
where $C_{3}=1+C_{1}+C_{2}$. Then, taking $\Psi\left(\delta\right)=C_{3}\delta+C_{2}\delta\sqrt{\log\left(1/\delta\right)}$,
we observe $\Psi/\delta^{2}=C_{3}\delta^{-1}+C_{2}\delta^{-1}\sqrt{\log\left(1/\delta\right)}$
is non-increasing, and thus Proposition \ref{prop:MainTechProp} applies.

Substituting $\Psi$ into $\sqrt{n}\delta_{n}^{2}\ge c\Psi\left(\delta_{n}\right)$
yields the inequality 
\[
\sqrt{n}\delta_{n}\ge cC_{3}+cC_{2}\sqrt{\log\left(1/\delta_{n}\right)}\implies\frac{\sqrt{n}\delta_{n}}{cC_{3}+cC_{2}\sqrt{\log\left(1/\delta_{n}\right)}}\ge1
\]
which is satisfied if we take $\delta_{n}=C_{4}\sqrt{\log n/n}$,
for a sufficiently large $C_{4}>0$ and all sufficiently large $n$,
since 
\[
\frac{\sqrt{n}\delta_{n}}{cC_{3}+cC_{2}\sqrt{\log\left(1/\delta_{n}\right)}}=\frac{C_{4}\sqrt{\log n}}{cC_{3}+\left(c/\sqrt{2}\right)C_{2}\sqrt{\log\left(n/C_{4}^{2}\right)-\log\log n}}.
\]
Then, the conclusion of Proposition \ref{prop:MainTechProp} implies
that for $\delta=\delta_{n}$, 
\[
\mathrm{P}\left(\frac{n}{\log n}\left|P_{n}\log\hat{f}_{k,n}-P_{n}\log f_{0}\right|\ge C_{4}^{2}\right)\le c_{1}\exp\left(-\frac{C_{4}^{2}}{c_{1}^{2}}\log n\right)
\]
holds for an MLE $\hat{f}_{k,n}\in\arg\max_{f\in{\cal M}_{k}^{\phi}}P_{n}\log f$,
since $P_{n}\log\hat{f}_{k,n}\ge P_{n}\log f$, for all $f\in{\cal M}_{k}^{\phi}$.
Thus, by definition of boundedness in probability 
\[
\frac{n}{\log n}\left\{ P_{n}\log\hat{f}_{k,n}-P_{n}\log f_{0}\right\} =O_{\mathrm{P}}\left(1\right).
\]
By writing $\hat{f}_{k,n}=f\left(\cdot;\hat{\psi}_{k,n}\right)$ for
$\hat{\psi}_{k,n}\in\arg\min_{\psi_{k}\in\mathbb{S}_{k}}\ell_{k,n}\left(\psi_{k}\right)$,
we have for each $k,l\ge k_{0}$, 
\begin{align*}
 & \frac{n}{\log n}\left\{ \min_{\psi_{k}\in\mathbb{S}_{k}}\ell_{k,n}\left(\psi_{k}\right)-\min_{\psi_{l}\in\mathbb{S}_{l}}\ell_{l,n}\left(\psi_{l}\right)\right\} \\
= & \frac{n}{\log n}\left\{ -P_{n}\log\hat{f}_{k,n}+P_{n}\log\hat{f}_{l,n}\right\} \\
= & -\frac{n}{\log n}\left\{ P_{n}\log\hat{f}_{k,n}-P_{n}\log f_{0}\right\} +\frac{n}{\log n}\left\{ P_{n}\log\hat{f}_{l,n}-P_{n}\log f_{0}\right\} \\
= & O_{\mathrm{P}}\left(1\right)+O_{\mathrm{P}}\left(1\right)=O_{\mathrm{P}}\left(1\right),
\end{align*}
as required. 

\subsection*{Proof of Proposition \ref{prop:Misspec_convergence}}

Fix $k\in\left[\bar{k}\right]$. Under A1{*} and A2--A4, Lemma \ref{lem:Convergenceofmin}
holds exactly as stated, and hence 
\[
\hat{L}_{k,n}\stackrel[n\to\infty]{\mathrm{a.s.}}{\longrightarrow}L_{k}.
\]
Since $\bar{k}$ is finite, taking the intersection over all $k\in\left[\bar{k}\right]$
yields 
\[
\max_{k\in\left[\bar{k}\right]}\left|\hat{L}_{k,n}-L_{k}\right|\stackrel[n\to\infty]{\mathrm{a.s.}}{\longrightarrow}0.
\]
By B1, for each fixed $k$, $\mathrm{pen}_{k,n}\to0$ and thus, from
finiteness of $\bar{k}$, we also have 
\[
\max_{k\in\left[\bar{k}\right]}\mathrm{pen}_{k,n}\underset{n\to\infty}{\longrightarrow}0.
\]
Define the penalised empirical criterion as $C_{k,n}=\hat{L}_{k,n}+\mathrm{pen}_{k,n}$
and write $C_{k}=L_{k}$. Then, it follows that 
\[
\max_{k\in\left[\bar{k}\right]}\left|C_{k,n}-C_{k}\right|\stackrel[n\to\infty]{\mathrm{a.s.}}{\longrightarrow}0.
\]

Let $C^{*}=\min_{k\in\left[\bar{k}\right]}C_{k}$. If ${\cal K}^{*}=\left[\bar{k}\right]$
then the result is trivial. Otherwise, define a positive gap 
\[
\gamma=\min_{k\notin{\cal K}^{*}}\left(C_{k}-C^{*}\right)>0,
\]
where the strict positivity arises because $\bar{k}$ is finite and
${\cal K}^{*}\ne\left[\bar{k}\right]$. Set $\epsilon=\gamma/4$.
Then, it holds that with probability one, there exists an $n_{0}$
such that for all $n\ge n_{0}$, 
\[
\max_{k\in\left[\bar{k}\right]}\left|C_{k,n}-C_{k}\right|<\epsilon.
\]
Fix $n\ge n_{0}$. If $k\notin{\cal K}^{*}$, then we have 
\[
C_{k,n}\ge C_{k}-\epsilon\ge\left(C^{*}+\gamma\right)-\epsilon=C^{*}+\frac{3}{4}\gamma.
\]
On the other hand, if $k^{*}\in{\cal K}^{*}$, we have 
\[
C_{k^{*},n}\le C_{k^{*}}+\epsilon=C^{*}+\epsilon=C^{*}+\frac{1}{4}\gamma.
\]
Thus, for every $n\ge n_{0}$, 
\[
\min_{k\notin{\cal K}^{*}}C_{k,n}>\min_{k^{*}\in{\cal K}^{*}}C_{k^{*},n}
\]
and therefore every minimiser of $k\mapsto C_{k,n}$ is in ${\cal K}^{*}$,
thus $\hat{k}_{n}\in{\cal K}^{*}$. Hence $\mathrm{P}\!\left(\hat{k}_{n}\in{\cal K}^{*}\right)\to1$. 

\subsection*{Proof of Proposition \ref{prop:no_min_pen}}

Fix an arbitrary $\mathrm{pen}\in\mathscr{P}$. For each $k$ and
$l$ satisfying $1\le k<l\le\bar{k}$, define 
\[
\Delta_{k,l}\left(n\right)=\frac{n}{\log n}\left\{ \mathrm{pen}_{l,n}-\mathrm{pen}_{k,n}\right\} .
\]
By B2, $\Delta_{k,l}\left(n\right)\to\infty$ as $n\to\infty$. Since
the set of choices of indices $k$ and $l$ is finite, it holds that
\[
\Delta_{\min}\left(n\right)=\min_{1\le k<l\le\bar{k}}\Delta_{k,l}\left(n\right)\to\infty.
\]
Indeed, if $\Delta_{\min}\left(n\right)$ did not diverge, there would
exist a subsequence $\left(n_{j}\right)_{j}$ and a constant $M<\infty$,
such that $\Delta_{\min}\left(n_{j}\right)\le M$, for all $j$. For
each $j$, choose a pair $\left(k_{j},l_{j}\right)$ that attains
the minimum. By finiteness of the set of indices, some pair $\left(k,l\right)$
occurs infinitely often along the sequence $\left(k_{j},l_{j}\right)_{j}$,
hence along a further subsequence, we would have $\Delta_{k,l}\left(n_{j}\right)\le M$,
which contradicts the fact that $\Delta_{k,l}\left(n\right)\to\infty$.

Now, set $b_{n}=\sqrt{\Delta_{\min}\left(n\right)}$, and note that
$b_{n}\to\infty$. Define the rescaled penalty $\widetilde{\mathrm{pen}}_{k,n}=\mathrm{pen}_{k,n}/b_{n}$,
for each $k\in\left[\bar{k}\right]$ and $n\in\mathbb{N}$. Observe
that for fixed $k\in\left[\bar{k}\right]$, since $b_{n}\to\infty$
and $\mathrm{pen}_{k,n}\to0$ (by B1), we have $\widetilde{\mathrm{pen}}_{k,n}\to0$
as $n\to\infty$, and thus $\widetilde{\mathrm{pen}}$ satisfies B1.
Next, for fixed $k$ and $l$, with $1\le k<l\le\bar{k}$, we have
\[
\frac{n}{\log n}\left\{ \widetilde{\mathrm{pen}}_{l,n}-\widetilde{\mathrm{pen}}_{k,n}\right\} =\frac{n}{b_{n}\log n}\left\{ \mathrm{pen}_{l,n}-\mathrm{pen}_{k,n}\right\} \ge\frac{1}{b_{n}}\Delta_{\min}\left(n\right)=\sqrt{\Delta_{\min}\left(n\right)}\to\infty,
\]
and hence $\widetilde{\mathrm{pen}}$ satisfies B2, and consequently,
$\widetilde{\mathrm{pen}}\in\mathscr{P}$.

Finally, since $b_{n}>1$ for all sufficiently large $n$, we have
$\widetilde{\mathrm{pen}}_{k,n}<\mathrm{pen}_{k,n}$, for all $k\in\left[\bar{k}\right]$
and all sufficiently large $n$. Since $\mathrm{pen}\in\mathscr{P}$
was arbitrary, we have shown that $\left(\mathscr{P},\preceq\right)$
has no minimal element. 

\subsection*{Proof of Proposition \ref{prop:AIC-minimax}}

We start by proving that $\mathscr{R}_{n}^{*}\gtrsim n^{-1}$. Since
${\cal M}_{1}\subset{\cal F}$, we have 
\[
\mathscr{R}_{n}^{*}\gtrsim\inf_{\tilde{f}_{n}}\sup_{\mu\in\left[-b,b\right]}\mathrm{E}_{\phi\left(\cdot;\mu\right)}\left\{ \mathfrak{h}^{2}\left(\tilde{f}_{n},\phi\left(\cdot;\mu\right)\right)\right\} .
\]
Fix $\Delta>0$ and set $\mu_{\pm}=\pm\Delta/\sqrt{n}$, and observe
that for large $n$, $\mu_{\pm}\in\left[-b,b\right]$. Let $f_{\pm}=\phi\left(\cdot;\mu_{\pm}\right)$.
It holds that 
\[
\int_{-\infty}^{\infty}\sqrt{f_{-}\left(x\right)f_{+}\left(x\right)}\,\mathrm{d}x=\exp\left\{ -\frac{\left(\mu_{+}-\mu_{-}\right)^{2}}{8}\right\} =\exp\left\{ -\frac{\Delta^{2}}{2n}\right\} ,
\]
hence 
\[
\mathfrak{h}^{2}\left(f_{-},f_{+}\right)=1-\int_{-\infty}^{\infty}\sqrt{f_{-}\left(x\right)f_{+}\left(x\right)}\,\mathrm{d}x=1-\exp\left\{ -\frac{\Delta^{2}}{2n}\right\} \ge\frac{\Delta^{2}}{4n},
\]
since $1-\exp\left(-u\right)\ge u/2$ for all $u\in\left[0,1\right]$,
and $\Delta^{2}<2n$ for sufficiently large $n$. Next, we apply Le
Cam's two point method (cf. \citealp[Sec. 15.2]{Wainwright2019hds})
to get 
\[
\max\left\{ \mathrm{E}_{f_{-}}\mathfrak{h}^{2}\left(\tilde{f}_{n},f_{-}\right),\mathrm{E}_{f_{+}}\mathfrak{h}^{2}\left(\tilde{f}_{n},f_{+}\right)\right\} \ge c_{1}\mathfrak{h}^{2}\left(f_{-},f_{+}\right)\left\{ 1-\left\Vert f_{-}^{\otimes n}-f_{+}^{\otimes n}\right\Vert _{\mathrm{TV}}\right\} ,
\]
where $f^{\otimes n}$ is the density of $\mathbf{X}_{n}$ consisting
of IID replicates of $X$ with density $f$, and $c_{1}>0$ is an
absolute constant. Next, observe that 
\[
\mathfrak{h}^{2}\left(f_{-}^{\otimes n},f_{+}^{\otimes n}\right)=1-\int_{\mathbb{R}^{n}}\sqrt{f_{-}^{\otimes n}\left(\mathbf{x}_{n}\right)f_{+}^{\otimes n}\left(\mathbf{x}_{n}\right)}\,\mathrm{d}\mathbf{x}_{n}=1-\prod_{i=1}^{n}\int_{-\infty}^{\infty}\sqrt{f_{-}\left(x_{i}\right)f_{+}\left(x_{i}\right)}\,\mathrm{d}x_{i}.
\]
Thus, 
\[
\mathfrak{h}^{2}\left(f_{-}^{\otimes n},f_{+}^{\otimes n}\right)=1-\left\{ \int_{-\infty}^{\infty}\sqrt{f_{-}\left(x\right)f_{+}\left(x\right)}\,\mathrm{d}x\right\} ^{n}=1-\left(\exp\left\{ -\frac{\Delta^{2}}{2n}\right\} \right)^{n}=1-\exp\left\{ -\frac{\Delta^{2}}{2}\right\} .
\]
Using the comparison inequality $\left\Vert f_{-}^{\otimes n}-f_{+}^{\otimes n}\right\Vert _{\mathrm{TV}}\le\sqrt{2}\mathfrak{h}\left(f_{-}^{\otimes n},f_{+}^{\otimes n}\right)$,
we get 
\[
\left\Vert f_{-}^{\otimes n}-f_{+}^{\otimes n}\right\Vert _{\mathrm{TV}}\le\sqrt{2}\sqrt{1-\exp\left\{ -\Delta^{2}/2\right\} }.
\]
Choosing $\Delta$ sufficiently small so that $1-\left\Vert f_{-}^{\otimes n}-f_{+}^{\otimes n}\right\Vert _{\mathrm{TV}}\ge c_{2}>0$,
we have 
\[
\max\left\{ \mathrm{E}_{f_{-}}\mathfrak{h}^{2}\left(\tilde{f}_{n},f_{-}\right),\mathrm{E}_{f_{+}}\mathfrak{h}^{2}\left(\tilde{f}_{n},f_{+}\right)\right\} \ge c_{1}c_{2}\frac{\Delta^{2}}{4n}=\frac{c}{n}.
\]
But since $\mu_{\pm}\in\left[-b,b\right]$, it holds by definition
that 
\[
\mathscr{R}_{n}^{*}\ge\max\left\{ \mathrm{E}_{f_{-}}\mathfrak{h}^{2}\left(\tilde{f}_{n},f_{-}\right),\mathrm{E}_{f_{+}}\mathfrak{h}^{2}\left(\tilde{f}_{n},f_{+}\right)\right\} \ge\frac{c}{n},
\]
proving the first inequality. The second inequality is immediate by
definition of $\mathscr{R}_{n}^{*}$.

To proceed, we require \citet[Thm. 7.11]{Massart2007Concentration}
which we reproduce in the context of this work. We firstly note that
for any nonnegative $f,g\in{\cal L}_{1}(\mathfrak{m})$, we will write
\[
\mathfrak{h}(f,g)=\left\{ \frac{1}{2}\int_{\mathbb{X}}\left(f^{1/2}-g^{1/2}\right)^{2}\,\mathrm{d}\mathfrak{m}\right\} ^{1/2}.
\]
If $f$ and $g$ are probability densities, then $\mathfrak{h}(f,g)$
is the usual Hellinger distance. We write $H_{[\cdot]}(\epsilon,\mathcal{F},\mathfrak{h})$
for the $\mathfrak{h}$-bracketing entropy of $\mathcal{F}$, where
brackets $[f^{\mathrm{L}},f^{\mathrm{U}}]$ may have endpoints $f^{\mathrm{L}},f^{\mathrm{U}}\in{\cal L}_{1}(\mathfrak{m})$
that are nonnegative but not necessarily densities, and $\mathfrak{h}(f^{\mathrm{L}},f^{\mathrm{U}})\le\epsilon$. 
\begin{thm}
\label{thm:Massart711_Hellinger_manuscript} Let $\mathbf{X}_{n}$
consist of IID replicates with common density $f_{0}$ with respect
to $\mathfrak{m}$ on $\mathbb{X}$. Let $\{\mathcal{F}_{k}\}_{k\in\mathcal{K}}$
be an at most countable collection of models, where each $\mathcal{F}_{k}$
is a set of densities with respect to $\mathfrak{m}$. For each $k\in\mathcal{K}$
let $\hat{f}_{k,n}\in\arg\min_{f\in\mathcal{F}_{k}}\ell_{n}(f)$ be
an MLE on $\mathcal{F}_{k}$. Assume the following: 
\begin{enumerate}
\item For each $k\in\mathcal{K}$ there exists a countable subset $\mathcal{F}_{k}^{\circ}\subset\mathcal{F}_{k}$
such that for every $f\in\mathcal{F}_{k}$ there exists $(f_{j})_{j\ge1}\subset\mathcal{F}_{k}^{\circ}$
with $\log f_{j}(x)\to\log f(x)$ for every $x\in\mathbb{X}$. 
\item For each $k\in\mathcal{K}$ there exists a function $\Psi_{k}:\mathbb{R}_{>0}\to\mathbb{R}_{>0}$
such that $\Psi_{k}$ is nondecreasing and $\xi\mapsto\Psi_{k}(\xi)/\xi$
is nonincreasing on $(0,\infty)$, and such that for every $\delta>0$
and every $f\in\mathcal{F}_{k}$, writing the $\delta$-Hellinger
ball 
\[
\mathcal{F}_{k}(f,\delta)=\{g\in\mathcal{F}_{k}:\mathfrak{h}(g,f)\le\delta\},
\]
one has 
\[
\int_{0}^{\delta}\sqrt{H_{[\ ]}\!\left(u,\,\mathcal{F}_{k}(f,\delta),\,\mathfrak{h}\right)}\,\mathrm{d}u\le\Psi_{k}(\delta).
\]
\item The sequence $(\rho_{k})_{k\in\mathcal{K}}$ consists of nonnegative
weights such that 
\[
\Upsilon=\sum_{k\in\mathcal{K}}e^{-\rho_{k}}<\infty.
\]
\end{enumerate}
\noindent For each $k\in\mathcal{K}$ let $\xi_{k}>0$ be the unique
solution of 
\[
\Psi_{k}(\xi)=\sqrt{n}\,\xi^{2}.
\]
Let $\mathrm{pen}_{k,n}\ge0$ be penalties and define the penalised
selector 
\[
\hat{k}_{n}=\min\arg\min_{k\in\mathcal{K}}\Bigl\{\ell_{n}(\hat{f}_{k,n})+\mathrm{pen}_{k,n}\Bigr\},\hat{f}_{n}=\hat{f}_{\hat{k}_{n},n}.
\]
Then there exist absolute constants $\kappa>0$ and $C>0$ such that
if, for all $k\in\mathcal{K}$, 
\[
\mathrm{pen}_{k,n}\ge\kappa\left(\xi_{k}^{2}+\frac{\rho_{k}}{n}\right),
\]
a minimiser $\hat{k}_{n}$ exists and moreover 
\[
\mathrm{E}_{f_{0}}\bigl\{\mathfrak{h}^{2}(f_{0},\hat{f}_{n})\bigr\}\le C\left[\inf_{k\in\mathcal{K}}\Bigl\{\mathfrak{K}(f_{0},\mathcal{F}_{k})+\mathrm{pen}_{k,n}\Bigr\}+\frac{\Upsilon}{n}\right],
\]
where $\mathfrak{K}(f_{0},\mathcal{F}_{k})=\inf_{f\in\mathcal{F}_{k}}\mathfrak{K}(f_{0},f)$. 
\end{thm}

To make use of Theorem \ref{thm:Massart711_Hellinger_manuscript},
we need the following lemmas. 
\begin{lem}
\label{lem:local_h_bracketing_Fk} Let $\mathcal{F}_{1}=\{\phi(\cdot;\mu):\mu\in[-b,b]\}$
and 
\[
\mathcal{F}_{2}=\Bigl\{ f(\cdot;\theta)=\pi\phi(\cdot;\mu_{1})+(1-\pi)\phi(\cdot;\mu_{2}):\theta=(\pi,\mu_{1},\mu_{2})\in\mathbb{T}_{2}\Bigr\},
\]
where 
\[
\mathbb{T}_{2}=[\underline{\pi},1-\underline{\pi}]\times\{(\mu_{1},\mu_{2})\in[-b,b]^{2}:\mu_{1}\le\mu_{2}\},
\]
for some fixed $\underline{\pi}\in(0,1/2)$, and write $d_{1}=1$
and $d_{2}=3$ for the parameter dimensions. For $k\in\{1,2\}$ and
$f\in\mathcal{F}_{k}$, define the $\delta$-Hellinger ball 
\[
\mathcal{F}_{k}(f,\delta)=\{g\in\mathcal{F}_{k}:\mathfrak{h}(g,f)\le\delta\}.
\]
Then, for each $k\in\{1,2\}$, there exists a constant $C_{k}<\infty$
(depending only on $(b,\underline{\pi})$) such that for every $f\in\mathcal{F}_{k}$,
every $\delta\in(0,1]$, and every $u\in(0,\delta]$, 
\[
H_{[\cdot]}\left(u,\mathcal{F}_{k}(f,\delta),\mathfrak{h}\right)\le d_{k}\log\left(\frac{C_{k}\,\delta}{u}\right).
\]
In particular, defining 
\[
B_{k}=\int_{0}^{1}\sqrt{\log\left(\frac{C_{k}}{t}\right)}\mathrm{d}t,\Psi_{k}(\delta)=B_{k}\sqrt{d_{k}}\delta,
\]
one has, for all $f\in\mathcal{F}_{k}$ and all $\delta\in(0,1]$,
\[
\int_{0}^{\delta}\sqrt{H_{[\cdot]}\left(u,\mathcal{F}_{k}(f,\delta),\mathfrak{h}\right)}\mathrm{d}u\le\Psi_{k}(\delta).
\]
\end{lem}

\begin{proof}
Fix $k\in\{1,2\}$ and write $\mathcal{F}_{k}=\{f(\cdot;\theta):\theta\in\mathbb{T}_{k}\}$
with $\mathbb{T}_{1}=[-b,b]$ and $\mathbb{T}_{2}$ as above. Let
$s_{\theta}=\sqrt{f(\cdot;\theta)}$, so that 
\[
\mathfrak{h}(f(\cdot;\theta),f(\cdot;\theta'))=\frac{1}{\sqrt{2}}\|s_{\theta}-s_{\theta'}\|_{2}.
\]
For each $k\in\left\{ 1,2\right\} $, we show that there exists a
nonnegative $A_{k}\in{\cal L}_{2}(\mathfrak{m})$ such that for all
$x$, 
\begin{equation}
\sup_{\theta\in\mathbb{T}_{k}}\|\frac{\partial}{\partial\theta}s_{\theta}(x)\|\le A_{k}(x).\label{eq:Ak_envelope}
\end{equation}
For $k=1$, $s_{\mu}=\phi(\cdot;\mu)^{1/2}$ and $\left(\partial/\partial\mu\right)s_{\mu}(x)=(x-\mu)s_{\mu}(x)/2$,
so 
\[
\sup_{\mu\in[-b,b]}|\left(\partial/\partial\mu\right)s_{\mu}(x)|\le\frac{|x|+b}{2}\sup_{\mu\in[-b,b]}\phi(x;\mu)^{1/2}.
\]
For $k=2$, the derivatives are 
\[
\frac{\partial}{\partial\mu_{1}}s_{\theta}(x)=\frac{\pi(x-\mu_{1})\phi(x;\mu_{1})}{2\sqrt{f(x;\theta)}},\frac{\partial}{\partial\mu_{2}}s_{\theta}(x)=\frac{(1-\pi)(x-\mu_{2})\phi(x;\mu_{2})}{2\sqrt{f(x;\theta)}},
\]
and 
\[
\frac{\partial}{\partial\pi}s_{\theta}(x)=\frac{\phi(x;\mu_{1})-\phi(x;\mu_{2})}{2\sqrt{f(x;\theta)}}.
\]
Using $\pi\ge\underline{\pi}$ and $1-\pi\ge\underline{\pi}$, we
have $f(x;\theta)\ge\pi\phi(x;\mu_{1})$ and $f(x;\theta)\ge(1-\pi)\phi(x;\mu_{2})$,
hence 
\[
\left|\frac{\partial}{\partial\mu_{1}}s_{\theta}(x)\right|\le\frac{\sqrt{\pi}}{2}|x-\mu_{1}|\phi(x;\mu_{1})^{1/2}\le\frac{|x|+b}{2}\sup_{\mu\in[-b,b]}\phi(x;\mu)^{1/2},
\]
and similarly for $\left|\left(\partial/\partial\mu_{2}\right)s_{\theta}(x)\right|$.
Moreover, since $f(x;\theta)\ge\underline{\pi}(\phi(x;\mu_{1})+\phi(x;\mu_{2}))$
and $|\phi_{1}-\phi_{2}|\le\phi_{1}+\phi_{2}$, 
\[
\left|\frac{\partial}{\partial\pi}s_{\theta}(x)\right|\le\frac{1}{2\sqrt{\underline{\pi}}}\sqrt{\phi(x;\mu_{1})+\phi(x;\mu_{2})}\le\frac{1}{\sqrt{\underline{\pi}}}\sup_{\mu\in[-b,b]}\phi(x;\mu)^{1/2}.
\]
These bounds yield \eqref{eq:Ak_envelope} with an $A_{k}$ of the
form $A_{k}(x)\asymp(|x|+b+1)\sup_{\mu\in[-b,b]}\phi(x;\mu)^{1/2}$,
which belongs to ${\cal L}_{2}(\mathfrak{m})$ because $(\sup_{\mu\in\left[-b,b\right]}\phi(x;\mu)^{1/2})^{2}=\sup_{\mu\in\left[-b,b\right]}\phi(x;\mu)$
has Gaussian tails, since 
\[
\sup_{\mu\in\left[-b,b\right]}\phi\left(x;\mu\right)=\frac{1}{\sqrt{2\pi}}\exp\left\{ -\frac{1}{2}\left[\left|x\right|-b\right]_{+}^{2}\right\} .
\]

Next, by the mean-value theorem and \eqref{eq:Ak_envelope}, 
\[
|s_{\theta}(x)-s_{\theta^{\prime}}(x)|\le\|\theta-\theta^{\prime}\|A_{k}(x)
\]
and hence $\|s_{\theta}-s_{\theta'}\|_{2}\le\|\theta-\theta^{\prime}\|\|A_{k}\|_{2}$.
Therefore, with $L_{k}=\|A_{k}\|_{2}/\sqrt{2}$, 
\[
\mathfrak{h}(f(\cdot;\theta),f(\cdot;\theta^{\prime}))\le L_{k}\|\theta-\theta^{\prime}\|.
\]
Fix $f=f(\cdot;\theta_{0})$. Then 
\[
\mathcal{F}_{k}(f,\delta)\subset\{f(\cdot;\theta):\|\theta-\theta_{0}\|\le\delta/L_{k}\}.
\]
We shall write $R=\delta/L_{k}$. For fixed $u\in(0,\delta]$, set
$r=u/(2L_{k})$. Let $\{\theta_{j}\}_{j=1}^{N}$ be an $r$-net of
the Euclidean ball $\mathbb{B}(\theta_{0},R)\subset\mathbb{R}^{d_{k}}$.
For each $j$, define 
\[
\underline{s}_{j}(x)=\left[s_{\theta_{j}}(x)-rA_{k}(x)\right]_{+},\overline{s}_{j}(x)=s_{\theta_{j}}(x)+rA_{k}(x),f_{j}^{\text{L}}=\underline{s}_{j}^{2},f_{j}^{\text{U}}=\overline{s}_{j}^{2}.
\]
If $\|\theta-\theta_{j}\|\le r$, then we have $|s_{\theta}-s_{\theta_{j}}|\le rA_{k}$
pointwise, from the Lipschitz property, hence $\underline{s}_{j}\le s_{\theta}\le\overline{s}_{j}$
and therefore $f_{j}^{\text{L}}\le f(\cdot;\theta)\le f_{j}^{\text{U}}$.
Furthermore, 
\[
\mathfrak{h}(f_{j}^{\text{L}},f_{j}^{\text{U}})=\frac{1}{\sqrt{2}}\|\overline{s}_{j}-\underline{s}_{j}\|_{2}\le\frac{1}{\sqrt{2}}\|2rA_{k}\|_{2}=\sqrt{2}\,r\,\|A_{k}\|_{2}=u.
\]
Thus the $N$ brackets $\{[f_{j}^{\text{L}},f_{j}^{\text{U}}]\}_{j\in\left[N\right]}$
form a $u$-bracketing of $\mathcal{F}_{k}(f,\delta)$ in the metric
$\mathfrak{h}$, so $N_{[]}(u,\mathcal{F}_{k}(f,\delta),\mathfrak{h})\le N$.

Since $u\le\delta$, we have $r=u/(2L_{k})\le\delta/(2L_{k})=R/2$,
hence $r\le R$. By the standard volumetric bound for Euclidean nets
(e.g. \citealp[Cor. 4.2.13]{vershynin2018hdp}), 
\[
N\le\left(1+\frac{2R}{r}\right)^{d_{k}}=\left(1+\frac{4\delta}{u}\right)^{d_{k}}\le\left(\frac{5\delta}{u}\right)^{d_{k}}.
\]
Taking logarithms yields, with $C_{k}=5$, 
\[
H_{[]}\left(u,\mathcal{F}_{k}(f,\delta),\mathfrak{h}\right)\le d_{k}\log\left(\frac{C_{k}\delta}{u}\right).
\]

\noindent Using the bound and making the substitution $u=\delta t$
gives 
\[
\int_{0}^{\delta}\sqrt{H_{[]}(u,\mathcal{F}_{k}(f,\delta),\mathfrak{h})}\mathrm{d}u\le\delta\sqrt{d_{k}}\int_{0}^{1}\sqrt{\log\left(\frac{C_{k}}{t}\right)}\mathrm{d}t=B_{k}\sqrt{d_{k}}\delta,
\]
which is the claimed admissible choice of $\Psi_{k}$. 
\end{proof}
\begin{lem}
\label{lem:xi_parametric_rate} Assume the setting of Lemma \ref{lem:local_h_bracketing_Fk}.
For $k\in\{1,2\}$ define 
\[
\Psi_{k}(\delta)=B_{k}\sqrt{d_{k}}\delta,
\]
for $\delta>0$, where $B_{k}<\infty$ is as in Lemma \ref{lem:local_h_bracketing_Fk}.
Then $\Psi_{k}$ is nondecreasing on $(0,\infty)$ and $\delta\mapsto\Psi_{k}(\delta)/\delta$
is nonincreasing on $(0,\infty)$. Moreover, the equation $\Psi_{k}(\xi)=\sqrt{n}\,\xi^{2}$
has the unique solution 
\[
\xi_{k}=\frac{B_{k}\sqrt{d_{k}}}{\sqrt{n}}.
\]
In particular, since $d_{k}\in\{1,3\}$, one has $\xi_{k}^{2}=O(n^{-1})$. 
\end{lem}

\begin{proof}
The function $\Psi_{k}(\delta)=B_{k}\sqrt{d_{k}}\,\delta$ is linear
in $\delta$, hence nondecreasing, and $\Psi_{k}(\delta)/\delta=B_{k}\sqrt{d_{k}}$
is constant, hence nonincreasing. For $\xi>0$, $\Psi_{k}(\xi)=\sqrt{n}\,\xi^{2}$
is equivalent to $B_{k}\sqrt{d_{k}}=\sqrt{n}\,\xi$, which directly
yields the unique solution $\xi_{k}=B_{k}\sqrt{d_{k}}/\sqrt{n}$. 
\end{proof}
To prove the final inequality, we apply Theorem \ref{thm:Massart711_Hellinger_manuscript}
with $\rho_{1}=\rho_{2}=1$ and $\Upsilon=\sum_{k\in\left\{ 1,2\right\} }\exp\left(-\rho_{k}\right)=2/e$.
By Lemma \ref{lem:local_h_bracketing_Fk}, Condition 2 of Theorem
\ref{thm:Massart711_Hellinger_manuscript} holds with $\Psi_{k}(\delta)=B_{k}\sqrt{d_{k}}\,\delta$.
By Lemma \ref{lem:xi_parametric_rate}, the fixed point equation $\Psi_{k}(\xi)=\sqrt{n}\xi^{2}$
has the unique solution $\xi_{k}=B_{k}\sqrt{d_{k}}/\sqrt{n}$, so
$\xi_{k}^{2}=B_{k}^{2}d_{k}/n$.

Define an AIC-type penalty $\mathrm{pen}_{k,n}^{\mathrm{AIC}}=ad_{k}/n$
with $a>0$ chosen so that 
\[
\frac{ad_{k}}{n}\ge\kappa\left(\frac{B_{k}^{2}d_{k}}{n}+\frac{1}{n}\right),\qquad k\in\{1,2\},
\]
where $\kappa>0$ is the absolute constant in Theorem \ref{thm:Massart711_Hellinger_manuscript}.
Such a choice is possible since $d_{k}\in\{1,3\}$ is fixed and $B_{k}<\infty$
depends only on $b$ and $\underline{\pi}$. Recall that $\hat{k}_{n}^{\mathrm{AIC}}$
is the corresponding penalised selector and $\hat{f}_{n}^{\mathrm{AIC}}=\hat{f}_{\hat{k}_{n}^{\mathrm{AIC}},n}$.

Then Theorem \ref{thm:Massart711_Hellinger_manuscript} yields, for
every $f_{0}\in{\cal F}$, 
\[
\mathrm{E}_{f_{0}}\bigl\{\mathfrak{h}^{2}(f_{0},\hat{f}_{n}^{\mathrm{AIC}})\bigr\}\le C\left[\inf_{k\in\{1,2\}}\Bigl\{\mathfrak{K}(f_{0},\mathcal{F}_{k})+\mathrm{pen}_{k,n}^{\mathrm{AIC}}\Bigr\}+\frac{\Upsilon}{n}\right],
\]
where $C>0$ is the absolute constant in Theorem \ref{thm:Massart711_Hellinger_manuscript}.
Now, fix $f_{0}\in{\cal F}=\mathcal{F}_{1}\cup\mathcal{F}_{2}$ and
let $k_{0}\in\{1,2\}$ be such that $f_{0}\in\mathcal{F}_{k_{0}}$.
Then $\mathfrak{K}(f_{0},\mathcal{F}_{k_{0}})=0$, and hence 
\[
\mathrm{E}_{f_{0}}\bigl\{\mathfrak{h}^{2}(f_{0},\hat{f}_{n}^{\mathrm{AIC}})\bigr\}\le C\left(\frac{ad_{k_{0}}}{n}+\frac{\Upsilon}{n}\right)\le\frac{C^{\prime}}{n},
\]
with $C^{\prime}=C(ad_{2}+\Upsilon)$ and $d_{2}=3$. Taking the supremum
over $f_{0}\in{\cal F}$ gives 
\[
\sup_{f_{0}\in{\cal F}}\mathrm{E}_{f_{0}}\bigl\{\mathfrak{h}^{2}(\hat{f}_{n}^{\mathrm{AIC}},f_{0})\bigr\}\le\frac{C^{\prime}}{n},
\]
as required.

\subsection*{Proof of Proposition \ref{prop:consistent_not_minimax}}

Recall that $L_{k,n}=\sup_{f\in\mathcal{F}_{k}}P_{n}\log f=-\inf_{f\in\mathcal{F}_{k}}\ell_{n}(f)$.
Then $\hat{k}_{n}=2$ implies 
\begin{equation}
L_{2,n}-L_{1,n}\ge\Delta_{n}.\label{eq:select2_implies}
\end{equation}
For $\delta>0$, we define the symmetric two-component density 
\[
x\mapsto f_{\delta}(x)=\frac{1}{2}\phi(x;-\delta)+\frac{1}{2}\phi(x;+\delta),
\]
which belongs to $\mathcal{F}_{2}$ provided that $\delta\le b$ and
$\underline{\pi}\le1/2\le1-\underline{\pi}$. Let $f_{*}=\phi(\cdot;0)\in\mathcal{F}_{1}$.
We require the following result. 
\begin{lem}
\label{lem:delta4_geometry} There exist constants $\delta_{0}\in(0,1]$
and $0<c_{0}\le C_{0}<\infty$, depending only on $b$ and $\underline{\pi}$,
such that for all $0<\delta\le\delta_{0}$, 
\[
c_{0}\delta^{4}\le\mathfrak{h}^{2}\bigl(f_{\delta},\mathcal{F}_{1}\bigr)\le C_{0}\delta^{4},\qquad c_{0}\delta^{4}\le\mathfrak{K}\bigl(f_{\delta},f_{*}\bigr)\le C_{0}\delta^{4},
\]
where $f_{\delta}(x)=\left(1/2\right)\phi(x;-\delta)+\left(1/2\right)\phi(x;+\delta)$
and $f_{*}=\phi(\cdot;0)$. 
\end{lem}

\begin{proof}
Throughout, $X\sim\mathrm{N}(0,1)$ and $\mathrm{E}_{*}$ denotes
expectation under the measure defined by $f_{*}$. Using $\phi(x;\pm\delta)=\phi(x;0)\exp(\pm\delta x-\delta^{2}/2)$,
we have, for each $x\in\mathbb{R}$, 
\begin{equation}
\frac{f_{\delta}(x)}{f_{*}(x)}=\exp\left(-\frac{\delta^{2}}{2}\right)\cosh(\delta x).\label{eq:ratio_cosh}
\end{equation}
Let 
\[
r_{\delta}(x)=\exp\left(-\frac{\delta^{2}}{2}\right)\cosh(\delta x).
\]
A Taylor expansion at $\delta=0$ gives, for each fixed $x$, 
\begin{equation}
r_{\delta}(x)=1+\frac{\delta^{2}}{2}(x^{2}-1)+\delta^{4}\Bigl(\frac{x^{4}}{24}-\frac{x^{2}}{4}+\frac{1}{8}\Bigr)+O\bigl(\delta^{6}(1+x^{6})\bigr).\label{eq:rdelta_pointwise}
\end{equation}
as $\delta\downarrow0$. To pass to integrals under $f_{*}$ we will
use domination. For $\delta_{0}\in(0,1]$ fixed, \eqref{eq:ratio_cosh}
implies, for all $|\delta|\le\delta_{0}$, 
\[
0<r_{\delta}(x)\le\exp(\delta_{0}^{2}/2)\cosh(\delta_{0}x)\le\exp(\delta_{0}^{2}/2)\exp(\delta_{0}|x|),
\]
so in particular $r_{\delta}(X)$ and $\log r_{\delta}(X)$ have finite
moments of all orders under $f_{*}$, since $\mathrm{E}_{*}\exp(\lambda|X|)<\infty$
for every $\lambda>0$. This allows us to apply dominated convergence
to the expansions below.

Write 
\[
\mathfrak{h}^{2}(f_{\delta},f_{*})=\frac{1}{2}\int f_{*}(x)\left(\sqrt{r_{\delta}(x)}-1\right)^{2}\mathrm{d}x=\frac{1}{2}\mathrm{E}_{*}\Bigl[\bigl(\sqrt{r_{\delta}(X)}-1\bigr)^{2}\Bigr].
\]
From \eqref{eq:rdelta_pointwise} we have $r_{\delta}(x)=1+\delta^{2}a(x)+\delta^{4}b(x)+O(\delta^{6}(1+x^{6}))$
with $a(x)=(x^{2}-1)/2$ and $b(x)=x^{4}/24-x^{2}/4+1/8$. Using $\sqrt{1+u}=1+\frac{1}{2}u-\frac{1}{8}u^{2}+O(u^{3})$
as $u\to0$ and substituting $u=\delta^{2}a(x)+\delta^{4}b(x)+O(\delta^{6}(1+x^{6}))$,
we obtain the pointwise expansion 
\[
\sqrt{r_{\delta}(x)}-1=\frac{\delta^{2}}{2}a(x)+O\bigl(\delta^{4}(1+x^{4})\bigr)=\frac{\delta^{2}}{4}(x^{2}-1)+O\bigl(\delta^{4}(1+x^{4})\bigr),
\]
as $\delta\downarrow0$, for each fixed $x$. Squaring yields 
\[
\bigl(\sqrt{r_{\delta}(x)}-1\bigr)^{2}=\frac{\delta^{4}}{16}(x^{2}-1)^{2}+O\bigl(\delta^{6}(1+x^{6})\bigr).
\]
The remainder term is dominated by a constant multiple of $\delta^{6}(1+|x|^{6})$
for $|\delta|\le\delta_{0}$, and $(1+|X|^{6})$ is integrable under
$f_{*}$. Hence, by the dominated convergence theorem, 
\[
\mathfrak{h}^{2}(f_{\delta},f_{*})=\frac{1}{2}\left\{ \frac{\delta^{4}}{16}\mathrm{E}_{*}\bigl[(X^{2}-1)^{2}\bigr]+O(\delta^{6})\right\} =\frac{\delta^{4}}{32}\mathrm{E}_{*}\bigl[(X^{2}-1)^{2}\bigr]+O(\delta^{6}).
\]
Since $\mathrm{E}_{*}[(X^{2}-1)^{2}]=2$ for $X\sim\mathrm{N}(0,1)$,
we obtain 
\begin{equation}
\mathfrak{h}^{2}(f_{\delta},f_{*})=\frac{\delta^{4}}{16}+O(\delta^{6}).\label{eq:h_delta_star_asymp}
\end{equation}
as $\delta\downarrow0$, and thus $\mathfrak{h}^{2}(f_{\delta},f_{*})\asymp\delta^{4}$
for sufficiently small $\delta$.

\noindent Recall that $\mathcal{F}_{1}=\{\phi(\cdot;\mu):\mu\in[-b,b]\}$.
Since $\mathfrak{h}$ is continuous in $\mu$ and $[-b,b]$ is compact,
the infimum over $\mu$ is attained. We show that no choice of $\mu$
can reduce the leading $\delta^{2}(x^{2}-1)$ perturbation in ${\cal L}_{2}(f_{*}\mathfrak{m})$.
For $\mu$ small, with $\phi(x;\mu)/\phi(x;0)=\exp(\mu x-\mu^{2}/2)$
and the expansion of $\sqrt{1+u}$, one obtains, pointwise for each
fixed $x$, 
\[
\sqrt{\frac{\phi(x;\mu)}{f_{*}(x)}}-1=\frac{\mu}{2}x+O\bigl(\mu^{2}(1+x^{2})\bigr),
\]
as $\mu\to0$. And from the previous expressions, we have 
\[
\sqrt{\frac{f_{\delta}(x)}{f_{*}(x)}}-1=\frac{\delta^{2}}{4}(x^{2}-1)+O\bigl(\delta^{4}(1+x^{4})\bigr).
\]
The functions $x\mapsto x$ and $x\mapsto x^{2}-1$ are orthogonal
in ${\cal L}_{2}(f_{*}\mathfrak{m})$ since $\mathrm{E}_{*}[X(X^{2}-1)]=0$.
Consequently, the leading ${\cal L}_{2}(f_{*}\mathfrak{m})$-distance
between $\sqrt{f_{\delta}/f_{*}}$ and the curve $\{\sqrt{\phi(\cdot;\mu)/f_{*}}:\mu\in[-b,b]\}$
cannot be cancelled by tuning $\mu$. A standard projection argument
in the Hilbert space ${\cal L}_{2}(f_{*}\mathfrak{m})$ then yields
\[
\inf_{\mu\in[-b,b]}\mathfrak{h}^{2}\bigl(f_{\delta},\phi(\cdot;\mu)\bigr)\ge c_{0}\delta^{4}
\]
for all sufficiently small $\delta$, for some $c_{0}>0$ (see, e.g.,
\citealp[Appendix A.2]{bickel1998efficient}). The upper bound $\mathfrak{h}^{2}(f_{\delta},\mathcal{F}_{1})\le\mathfrak{h}^{2}(f_{\delta},f_{*})\le C_{0}\delta^{4}$
follows from \eqref{eq:h_delta_star_asymp}. This proves the two-sided
bound for $\mathfrak{h}^{2}(f_{\delta},\mathcal{F}_{1})$.

Next, by \eqref{eq:ratio_cosh}, 
\[
\mathfrak{K}(f_{\delta},f_{*})=\int_{-\infty}^{\infty}f_{\delta}(x)\log\left(\frac{f_{\delta}(x)}{f_{*}(x)}\right)\mathrm{d}x=\mathrm{E}_{f_{\delta}}\Bigl[\log r_{\delta}(X)\Bigr].
\]
Using $\log r_{\delta}(x)=-\delta^{2}/2+\log\cosh(\delta x)$ and
the Taylor expansion $\log\cosh(u)=u^{2}/2-u^{4}/12+O(u^{6})$ (for
fixed $u$), we obtain for each fixed $x$, 
\[
\log r_{\delta}(x)=\frac{\delta^{2}}{2}(x^{2}-1)-\frac{\delta^{4}}{12}x^{4}+O\bigl(\delta^{6}x^{6}\bigr).
\]
As above, domination holds for $|\delta|\le\delta_{0}$, and therefore
we may integrate term-by-term to obtain 
\[
\mathfrak{K}(f_{\delta},f_{*})=\frac{\delta^{4}}{4}+O(\delta^{6}),
\]
as $\delta\downarrow0$, and hence $\mathfrak{K}(f_{\delta},f_{*})\asymp\delta^{4}$
for sufficiently small $\delta$. Collecting the bounds we have the
conclusion of the lemma, upon choosing $\delta_{0}$ sufficiently
small and adjusting constants. 
\end{proof}
Now define 
\[
\delta_{n}=\left(\frac{\Delta_{n}}{8C_{0}}\right)^{1/4}
\]
and let $f_{n}=f_{\delta_{n}}$. Since $\Delta_{n}\downarrow0$, we
have $\delta_{n}\downarrow0$, and for all $n$ large enough, $\delta_{n}\le\delta_{0}$
and $\delta_{n}\le b$, so that $f_{n}\in\mathcal{F}_{2}$, and thus
Lemma \ref{lem:delta4_geometry} applies. In particular, 
\begin{equation}
\mathfrak{h}^{2}(f_{n},\mathcal{F}_{1})\ge c_{0}\delta_{n}^{4}=\frac{c_{0}}{8C_{0}}\Delta_{n},\qquad\mathfrak{K}(f_{n},f_{*})\le C_{0}\delta_{n}^{4}=\frac{\Delta_{n}}{8}.\label{eq:h_lower_at_fn}
\end{equation}

In the following step, we work under the law $f_{n}$. We keep $P_{n}$
for the empirical measure $P_{n}g=n^{-1}\sum_{i=1}^{n}g(X_{i})$,
and we write $\mathrm{P}_{f_{n}}$ and $\mathrm{E}_{f_{n}}$ for probability
and expectation under $f_{n}$. Recall that $\hat{k}_{n}=2$ implies
$L_{2,n}-L_{1,n}\ge\Delta_{n}$, and since $f_{*}\in\mathcal{F}_{1}$
we have $L_{1,n}\ge P_{n}\log f_{*}$. Hence 
\begin{equation}
\{\hat{k}_{n}=2\}\subseteq\left\{ L_{2,n}-P_{n}\log f_{*}\ge\Delta_{n}\right\} .\label{eq:select2_against_fstar}
\end{equation}
We decompose 
\begin{equation}
L_{2,n}-P_{n}\log f_{*}=R_{n}+P_{n}\log\left(\frac{f_{n}}{f_{*}}\right),\label{eq:decomp_Rn}
\end{equation}
where $R_{n}=L_{2,n}-P_{n}\log f_{n}\ge0$ since $L_{2,n}=\sup_{f\in\mathcal{F}_{2}}P_{n}\log f\ge P_{n}\log f_{n}$,
and $f_{n}\in\mathcal{F}_{2}$. We control the two terms on the right-hand
side of \eqref{eq:decomp_Rn}, separately.

Firstly, under $f_{n}$ the model $\mathcal{F}_{2}$ is correctly
specified with a finite-dimensional parameter on the compact set $\mathbb{T}_{2}$.
Moreover, $f_{n}=f_{\delta_{n}}$ has component means $\mu_{1}=-\delta_{n}$
and $\mu_{2}=\delta_{n}$, so its distance to the boundary $\{\mu_{1}=\mu_{2}\}$
is $|\mu_{2}-\mu_{1}|=2\delta_{n}$. Since $n\Delta_{n}\to\infty$,
we have $n^{2}\Delta_{n}\to\infty$, and therefore 
\[
\sqrt{n}\,\delta_{n}=\left(\frac{n^{2}\Delta_{n}}{8C_{0}}\right)^{1/4}\ \to\ \infty.
\]
Thus the true parameter is asymptotically separated from $\{\mu_{1}=\mu_{2}\}$
on the local $n^{-1/2}$ scale. To control $R_{n}$, let $\hat{f}_{2,n}\in\arg\max_{f\in\mathcal{F}_{2}}P_{n}\log f$,
so that $L_{2,n}=P_{n}\log\hat{f}_{2,n}$ and hence 
\[
R_{n}=L_{2,n}-P_{n}\log f_{n}=P_{n}\log\left(\frac{\hat{f}_{2,n}}{f_{n}}\right)\ge0.
\]

By Lemma \ref{lem:local_h_bracketing_Fk} (with $k=2$), the local
Hellinger bracketing entropy satisfies 
\[
H_{[]}\left(u,\mathcal{F}_{2}(f_{n},\delta),\mathfrak{h}\right)\le d_{2}\log\left(\frac{C_{2}\delta}{u}\right),
\]
for $0<u\le\delta\le1$, and therefore one may take $\Psi_{2}(\delta)=B_{2}\sqrt{d_{2}}\,\delta$
in the notation of \citet[Thm. 7.4]{van-de-Geer:2000aa}. Let $\xi_{2}=B_{2}\sqrt{d_{2}}/\sqrt{n}$
be the corresponding fixed point from Lemma \ref{lem:xi_parametric_rate}.
Then \citet[Thm. 7.4]{van-de-Geer:2000aa} implies that there exists
a universal constant $c>0$ such that one may take $\delta_{2,n}=c\,\xi_{2}$,
and \citet[Cor. 7.5]{van-de-Geer:2000aa} yields a constant $c_{1}<\infty$
(depending only on $b$ and $\underline{\pi}$ through the preceding
entropy bounds) such that, for all $\delta\ge\delta_{2,n}$, 
\[
\mathrm{P}_{f_{n}}\left(R_{n}\ge\delta^{2}\right)\le c_{1}\exp\left(-\frac{n\delta^{2}}{c_{1}^{2}}\right).
\]
Since $n\Delta_{n}\to\infty$ and $\delta_{2,n}^{2}\asymp n^{-1}$,
we have $\eta\Delta_{n}\ge\delta_{2,n}^{2}$ for all sufficiently
large $n$ and every fixed $\eta>0$. Therefore, for any fixed $\eta>0$,
taking $\delta=\sqrt{\eta\Delta_{n}}$ yields 
\[
\mathrm{P}_{f_{n}}\left(R_{n}\ge\eta\Delta_{n}\right)\le c_{1}\exp\left(-\frac{\eta n\Delta_{n}}{c_{1}^{2}}\right)\to0,
\]
and hence $R_{n}=o_{\mathrm{P}_{f_{n}}}(\Delta_{n})$.

Next, let 
\[
Z_{i,n}=\log\left(\frac{f_{n}(X_{i})}{f_{*}(X_{i})}\right),
\]
for $\left(X_{i}\right)_{i=1}^{n}$ IID with common PDF $f_{n}$.
Then $P_{n}\log(f_{n}/f_{*})=n^{-1}\sum_{i=1}^{n}Z_{i,n}$ and $\mathrm{E}_{f_{n}}(Z_{1,n})=\mathfrak{K}(f_{n},f_{*})\le\Delta_{n}/8$
by \eqref{eq:h_lower_at_fn}. Furthermore, by the Taylor expansion
of $\log r_{\delta}(x)$ used in Lemma \ref{lem:delta4_geometry}
(with $\delta=\delta_{n}$) and the same domination argument, one
has $\mathrm{Var}_{f_{n}}(Z_{1,n})\lesssim\delta_{n}^{4}\lesssim\Delta_{n}$
for all sufficiently large $n$. Consequently, 
\[
\mathrm{Var}_{f_{n}}\left\{ P_{n}\log\left(\frac{f_{n}}{f_{*}}\right)\right\} =\frac{1}{n}\mathrm{Var}_{f_{n}}(Z_{1,n})\lesssim\frac{\Delta_{n}}{n}.
\]
By Chebyshev's inequality, 
\[
\mathrm{P}_{f_{n}}\left(P_{n}\log\left(\frac{f_{n}}{f_{*}}\right)\ge\frac{\Delta_{n}}{2}\right)\le\mathrm{P}_{f_{n}}\left(P_{n}\log\left(\frac{f_{n}}{f_{*}}\right)-\mathrm{E}_{f_{n}}(Z_{1,n})\ge\frac{3\Delta_{n}}{8}\right)\lesssim\frac{1}{n\Delta_{n}}\to0,
\]
since $n\Delta_{n}\to\infty$. Combining \eqref{eq:select2_against_fstar}
and \eqref{eq:decomp_Rn}, we obtain 
\[
\mathrm{P}_{f_{n}}(\hat{k}_{n}=2)\le\mathrm{P}_{f_{n}}\!\left(R_{n}\ge\frac{\Delta_{n}}{2}\right)+\mathrm{P}_{f_{n}}\!\left(P_{n}\log\!\left(\frac{f_{n}}{f_{*}}\right)\ge\frac{\Delta_{n}}{2}\right)\to0,
\]
and therefore $\mathrm{P}_{f_{n}}(\hat{k}_{n}=1)\to1$.

On the event $\{\hat{k}_{n}=1\}$, we have $\hat{f}_{n}\in\mathcal{F}_{1}$,
and therefore 
\[
\mathfrak{h}^{2}(\hat{f}_{n},f_{n})\ge\inf_{g\in\mathcal{F}_{1}}\mathfrak{h}^{2}(g,f_{n})=\mathfrak{h}^{2}(f_{n},\mathcal{F}_{1}).
\]
Taking expectations under $f_{n}$ and using \eqref{eq:h_lower_at_fn}
yields 
\[
\mathrm{E}_{f_{n}}\bigl\{\mathfrak{h}^{2}(\hat{f}_{n},f_{n})\bigr\}\ge\mathfrak{h}^{2}(f_{n},\mathcal{F}_{1})\,\mathrm{P}_{f_{n}}(\hat{k}_{n}=1)\ge\frac{c_{0}}{8C_{0}}\,\Delta_{n}\,(1+o(1)).
\]
Since $f_{n}\in{\cal F}$, we conclude that 
\[
\sup_{f_{0}\in{\cal F}}\mathrm{E}_{f_{0}}\bigl\{\mathfrak{h}^{2}(\hat{f}_{n},f_{0})\bigr\}\ge\mathrm{E}_{f_{n}}\bigl\{\mathfrak{h}^{2}(\hat{f}_{n},f_{n})\bigr\}\ge c\,\Delta_{n},
\]
for all $n$ large enough, with $c=c_{0}/(16C_{0})$, since $1+o\left(1\right)\ge1/2$.
This completes the proof. 

\subsection*{Proof of Proposition \ref{prop:Delta_assumption_necessary}}

Recall that $L_{k,n}=\sup_{f\in{\cal M}_{k}}P_{n}\log f=-\inf_{f\in{\cal M}_{k}}\ell_{n}(f)$,
so that $\hat{k}_{n}=2$ implies $L_{2,n}-L_{1,n}\ge\Delta_{n}$,
as in \eqref{eq:select2_implies}. Assume by contradiction that $\hat{k}_{n}$
is consistent but $\Delta_{n}\not\to0$. Then there exist $\epsilon>0$
and a subsequence $(n_{m})_{m}$ such that $\Delta_{n_{m}}\ge\epsilon$
for all $m$. For $\delta>0$, let $f_{\delta}$ and $f_{*}$ be as
in Lemma \ref{lem:delta4_geometry}. Since $f_{*}\in{\cal M}_{1}$,
Lemma \ref{lem:delta4_geometry} gives $\mathfrak{K}(f_{\delta},f_{*})\to0$
as $\delta\downarrow0$, hence we may fix $\delta\in(0,b]$ such that
\[
D_{\delta}=\inf_{g\in{\cal M}_{1}}\mathfrak{K}(f_{\delta},g)\le\mathfrak{K}(f_{\delta},f_{*})\le\frac{\epsilon}{4}.
\]
Write $\mathrm{P}_{\delta}$ and $\mathrm{E}_{\delta}$ for probability
and expectation under the law with density $f_{\delta}$. Because
${\cal M}_{1}$ and ${\cal M}_{2}$ are compact parametric families
and $f\mapsto\log f$ admits an integrable envelope under $f_{\delta}$,
the classes $\{\log f:f\in{\cal M}_{k}\}$ are $\mathrm{P}_{\delta}$-GC,
via Lemma \ref{lem:GCclass}. Consequently, 
\[
\sup_{f\in{\cal M}_{k}}\bigl|P_{n}\log f-\mathrm{E}_{\delta}\log f\bigr|\to0,
\]
in $\mathrm{P}_{\delta}$-probability, for each $k\in\left\{ 1,2\right\} $,
and therefore 
\[
L_{k,n}\to L_{k}=\sup_{f\in{\cal M}_{k}}\mathrm{E}_{\delta}\log f
\]
in $\mathrm{P}_{\delta}$-probability, for each $k\in\left\{ 1,2\right\} $,
as well. Since $f_{\delta}\in{\cal M}_{2}$, we have $L_{2}=\mathrm{E}_{\delta}\log f_{\delta}$
and 
\[
L_{1}=\sup_{g\in{\cal M}_{1}}\mathrm{E}_{\delta}\log g=\mathrm{E}_{\delta}\log f_{\delta}-\inf_{g\in{\cal M}_{1}}\mathfrak{K}(f_{\delta},g)=\mathrm{E}_{\delta}\log f_{\delta}-D_{\delta},
\]
so $L_{2}-L_{1}=D_{\delta}\le\epsilon/4$. Hence $L_{2,n}-L_{1,n}\to D_{\delta}$
in $\mathrm{P}_{\delta}$-probability, and thus 
\[
\mathrm{P}_{\delta}\left(L_{2,n_{m}}-L_{1,n_{m}}<\frac{\epsilon}{2}\right)\to1.
\]
On this event we have $L_{2,n_{m}}-L_{1,n_{m}}<\Delta_{n_{m}}$, so
$\hat{k}_{n_{m}}\ne2$, hence $\hat{k}_{n_{m}}=1$. Therefore $\mathrm{P}_{\delta}(\hat{k}_{n_{m}}=1)\to1$,
contradicting the assumed consistency on ${\cal M}_{2}\setminus{\cal M}_{1}$.
This proves that $\Delta_{n}\to0$.

Next, assume by contradiction that $\hat{k}_{n}$ is consistent but
$n\Delta_{n}\not\to\infty$. Then there exist $M<\infty$ and a subsequence
$(n_{m})_{m}$ such that $n_{m}\Delta_{n_{m}}\le M$ for all $m$.
We now work under $f_{*}\in{\cal M}_{1}$ and write $\mathrm{P}_{*}$
and $\mathrm{E}_{*}$ for probability and expectation under $f_{*}$.
Let 
\[
S_{n}=P_{n}(X^{2}-1)=\frac{1}{n}\sum_{i=1}^{n}(X_{i}^{2}-1),
\]
and $\delta_{n}=\min\left\{ \left[S_{n}\right]_{+}^{1/2},b\right\} $,
and note that $f_{\delta_{n}}\in{\cal M}_{2}$ for all $n$. Since
$L_{2,n}=\sup_{f\in{\cal M}_{2}}P_{n}\log f\ge P_{n}\log f_{\delta_{n}}$,
we have 
\begin{equation}
L_{2,n}-L_{1,n}\ge P_{n}\log\left(\frac{f_{\delta_{n}}}{f_{*}}\right)-\{L_{1,n}-P_{n}\log f_{*}\}.\label{eq:lower_LR_split}
\end{equation}

We seek a lower bound for the first term. Using \eqref{eq:ratio_cosh}
and the expansion of $\log r_{\delta}(x)$ in the proof of Lemma \ref{lem:delta4_geometry},
there exists $\delta_{0}\in(0,1]$ such that for all $|\delta|\le\delta_{0}$,
\[
\log\left(\frac{f_{\delta}(x)}{f_{*}(x)}\right)=\frac{\delta^{2}}{2}(x^{2}-1)-\frac{\delta^{4}}{4}-\frac{\delta^{4}}{12}(x^{4}-3)+O(\delta^{6}x^{6}).
\]
Under $f_{*}$ we have $S_{n}=O_{\mathrm{P}_{*}}(n^{-1/2})$, hence
$\delta_{n}=O_{\mathrm{P}_{*}}(n^{-1/4})$ and thus $\mathrm{P}_{*}(\delta_{n}\le\delta_{0})\to1$.
On the event $\{\delta_{n}\le\delta_{0}\}$, the preceding expansion
and the moment bounds under $f_{*}$ imply 
\[
P_{n}\log\left(\frac{f_{\delta_{n}}}{f_{*}}\right)=\frac{\delta_{n}^{2}}{2}S_{n}-\frac{\delta_{n}^{4}}{4}+o_{\mathrm{P}_{*}}(n^{-1})=\frac{1}{4}[S_{n}]_{+}^{2}+o_{\mathrm{P}_{*}}(n^{-1}),
\]
and therefore 
\begin{equation}
nP_{n}\log\left(\frac{f_{\delta_{n}}}{f_{*}}\right)=\frac{1}{4}[\sqrt{n}S_{n}]_{+}^{2}+o_{\mathrm{P}_{*}}(1).\label{eq:gain_submodel}
\end{equation}
Since $\sqrt{n}S_{n}$ converges weakly to $\mathrm{N}(0,2)$ under
$f_{*}$, it follows from \eqref{eq:gain_submodel} that for each
fixed $A>0$, 
\begin{equation}
\liminf_{n\to\infty}\mathrm{P}_{*}\!\left(nP_{n}\log\left(\frac{f_{\delta_{n}}}{f_{*}}\right)\ge A\right)=\rho_{A}>0.\label{eq:positive_tail_gain}
\end{equation}

Next, we control the term $L_{1,n}-P_{n}\log f_{*}$. Since $f_{*}\in{\cal M}_{1}$
and ${\cal M}_{1}$ is a one-dimensional compact parametric family,
an application of \citet[Cor. 7.5]{van-de-Geer:2000aa} together with
Lemmas \ref{lem:local_h_bracketing_Fk} and \ref{lem:xi_parametric_rate}
(with $k=1$) yields 
\[
n\bigl(L_{1,n}-P_{n}\log f_{*}\bigr)=O_{\mathrm{P}_{*}}(1).
\]
Hence, by definition of $O_{\mathrm{P}_{*}}$, for each $\eta\in(0,1)$
there exists $K<\infty$ such that 
\begin{equation}
\inf_{n\ge1}\mathrm{P}_{*}\!\left(n\bigl(L_{1,n}-P_{n}\log f_{*}\bigr)\le K\right)\ge1-\eta.\label{eq:tight_L1}
\end{equation}

Fix such $\eta$ and $K$, and set $A=M+K+1$. By \eqref{eq:lower_LR_split},
on the event $\{nP_{n}\log(f_{\delta_{n}}/f_{*})\ge A\}\cap\{n(L_{1,n}-P_{n}\log f_{*})\le K\}$
we have 
\[
n(L_{2,n}-L_{1,n})\ge A-K=M+1\ge n\Delta_{n}+1,
\]
and hence $L_{2,n}-L_{1,n}>\Delta_{n}$, so $\hat{k}_{n}=2$. Combining
\eqref{eq:positive_tail_gain} and \eqref{eq:tight_L1} yields 
\[
\liminf_{m\to\infty}\mathrm{P}_{*}(\hat{k}_{n_{m}}=2)\ge\liminf_{m\to\infty}\mathrm{P}_{*}\left(n_{m}P_{n_{m}}\log\left(\frac{f_{\delta_{n_{m}}}}{f_{*}}\right)\ge A\right)-\eta\ge\rho_{A}-\eta,
\]
and since $\eta\in(0,1)$ is arbitrary, we obtain $\liminf_{m}\mathrm{P}_{*}(\hat{k}_{n_{m}}=2)>0$.
This contradicts the assumed consistency on ${\cal M}_{1}$. Therefore
$n\Delta_{n}\to\infty$, and the proof is complete. 

\section*{Appendix B}

\subsection*{Assumptions from \citet{keribin2000consistent}}

\noindent The following section summarises the assumptions from \citet{keribin2000consistent}
in the notation of this text. We recall that $\mathrm{d}P=f_{0}\,\mathrm{d}\mathfrak{m}$
and note that indices $i,j,r,i_{1},\dots,i_{5}\in\left[m\right]$.
We will write $\mathbb{S}({\cal H})=\{h\in{\cal H}:\|h\|_{{\cal H}}=1\}$
where ${\cal H}={\cal L}_{2}\left(P\right)$ is a Hilbert space. For
a multi-index $\bm{\alpha}$, $D_{\theta}^{\bm{\alpha}}\phi\left(\cdot;\theta\right)$
denotes the corresponding partial derivative of $\phi\left(\cdot;\theta\right)$
with respect to $\theta\in\mathbb{T}$. Let $\beta=\left(a_{1},\dots,b_{1},\dots,c_{1},\dots,\theta_{1},\dots\right)$
be coefficients such that $a_{z}\in\mathbb{R}^{m}$, $b_{z}\ge0$,
$c_{z}\in\mathbb{R}$ and $\theta_{z}\in\mathbb{T}$ for relevant
indices $z$, and say that $\beta\in\mathbb{B}$ if for $k>k_{0}$,
\[
\sum_{z=1}^{k-k_{0}}b_{z}+\sum_{z=1}^{k_{0}}c_{z}=0,
\]
\[
\sum_{z=1}^{k-k_{0}}b_{z}^{2}+\sum_{z=1}^{k_{0}}c_{z}^{2}+\sum_{z=1}^{k_{0}}\left\Vert a_{z}\right\Vert ^{2}=1,
\]
\[
N\left(\beta\right)=\left\Vert \sum_{z=1}^{k_{0}}\pi_{0,z}\sum_{i=1}^{m}a_{i,z}\frac{D_{i}\phi\left(\cdot;\theta_{0,z}\right)}{f_{0}}+\sum_{z=1}^{\bar{k}-k_{0}}b_{z}\frac{\phi\left(\cdot;\theta_{z}\right)}{f_{0}}+\sum_{z=1}^{k_{0}}c_{z}\frac{\phi\left(\cdot;\theta_{0,z}\right)}{f_{0}}\right\Vert _{{\cal H}}.
\]

\begin{description}
\item [{(Id)}] If two $\bar{k}$-mixtures coincide $\mathfrak{m}$-a.e.,
i.e., 
\[
\sum_{z=1}^{\bar{k}}\pi_{z}\phi\left(\cdot;\theta_{z}\right)=\sum_{z=1}^{\bar{k}}\pi_{z}'\phi\left(\cdot;\theta_{z}'\right),
\]
then the corresponding discrete mixing measures on $\mathbb{T}$ are
equal: 
\[
\sum_{z=1}^{\bar{k}}\pi_{z}\delta_{\theta_{z}}=\sum_{z=1}^{\bar{k}}\pi_{z}'\delta_{\theta_{z}'}.
\]
\item [{(P1-a)}] There exists $H\in{\cal L}_{1}(P)$ such that 
\[
|\log f|\le H,
\]
$\mathfrak{m}$-a.e. for all $f\in{\cal M}_{\bar{k}}^{\phi}$. 
\item [{(P1-b)}] The map $\theta\mapsto\phi\left(\cdot;\theta\right)$
admits partial derivatives up to order $5$ and 
\[
\frac{D_{\theta}^{\bm{\alpha}}\phi\left(\cdot;\theta\right)}{f_{0}}\in{\cal L}_{3}(P),
\]
for every multi-index $\bm{\alpha}$ with $1\le|\bm{\alpha}|\le5$.
Moreover, there exists $\varepsilon>0$ and random maps $M_{2},M_{3},M_{5}$,
independent of $\theta$, with $P[M_{s}^{3}]<\infty$ for $s\in\{2,3,5\}$,
such that for all $\theta$ with $\|\theta-\theta_{0}\|\le\varepsilon$,
\[
\begin{aligned}\Bigl|\frac{D_{ij}^{2}\phi\left(\cdot;\theta\right)}{f_{0}}\Bigr| & \le M_{2},\\
\Bigl|\frac{D_{ijr}^{3}\phi\left(\cdot;\theta\right)}{f_{0}}\Bigr| & \le M_{3},\\
\Bigl|\frac{D_{i_{1}\cdots i_{5}}^{5}\phi\left(\cdot;\theta\right)}{f_{0}}\Bigr| & \le M_{5}.
\end{aligned}
\]
\item [{(P2)}] For integers $p_{1},p_{2}\ge0$ with $p_{1}+p_{2}\le\bar{k}-k_{0}$,
and any set of distinct points $\theta'_{1},\dots,\theta'_{p_{1}}\in\mathbb{T}$
with $\theta'_{z}\notin\{\theta_{0,1},\dots,\theta_{0,k_{0}}\}$,
the family 
\[
\left\{ \frac{\phi\left(\cdot;\theta'_{z}\right)}{f_{0}}\right\} _{z\in\left[p_{1}\right]}\cup\left\{ \frac{\phi\left(\cdot;\theta_{0,\zeta}\right)}{f_{0}},\Bigl.\frac{D_{i}\phi\left(\cdot;\theta\right)}{f_{0}}\Bigr|_{\theta=\theta_{0,\zeta}},\Bigl.\frac{D_{ij}\phi\left(\cdot;\theta\right)}{f_{0}}\Bigr|_{\theta=\theta_{0,\zeta}}:\zeta\in\left[k_{0}\right]\right\} 
\]
is linearly independent in ${\cal H}$. 
\item [{(P3)}] Let ${\cal D}$ be the subset of $\mathbb{S}\left({\cal H}\right)$
of functions of the form 
\[
\mathcal{D}=\frac{1}{N\left(\beta\right)}\left\{ \sum_{z=1}^{k_{0}}\pi_{0,z}\sum_{i=1}^{m}a_{i,z}\frac{D_{i}\phi\left(\cdot;\theta_{0,z}\right)}{f_{0}}+\sum_{z=1}^{\bar{k}-k_{0}}b_{z}\frac{\phi\left(\cdot;\theta_{z}\right)}{f_{0}}+\sum_{z=1}^{k_{0}}c_{z}\frac{\phi\left(\cdot;\theta_{0,z}\right)}{f_{0}}\right\} .
\]
Assume that $\mathcal{D}$ is $P$-Donsker and that the corresponding
centered Gaussian process $\{F_{d}:d\in\mathcal{D}\}$ with covariance
$P[F_{d_{1}}F_{d_{2}}]=\langle d_{1},d_{2}\rangle_{{\cal H}}$ has
almost surely continuous sample paths. 
\item [{(C1)}] For all $n\ge1$ and $k_{0}<k\le\bar{k}$, 
\[
\mathrm{pen}_{k+1,n}>\mathrm{pen}_{k,n}>0,\qquad\mathrm{pen}_{k,n}\xrightarrow[n\to\infty]{}0,\qquad n\,\mathrm{pen}_{k,n}\xrightarrow[n\to\infty]{}\infty,\qquad\lim_{n\to\infty}\frac{\mathrm{pen}_{k,n}}{\mathrm{pen}_{k_{0},n}}>1.
\]
\end{description}

\subsection*{Assumptions from \citet{nguyen2024panic}}

\noindent\citet{nguyen2024panic} considers a general model selection
problem formulated in terms of a loss $\ell_{k}:\mathbb{X}\times\mathbb{T}_{k}\to\mathbb{R}$
with parameter set $\mathbb{T}_{k}\subset\mathbb{R}^{d_{k}}$. The
population and empirical risks are 
\[
r_{k}(\theta_{k})=\mathrm{E}\{\ell_{k}(X;\theta_{k})\},\qquad R_{k,n}(\theta_{k})=\frac{1}{n}\sum_{i=1}^{n}\ell_{k}(X_{i};\theta_{k}),
\]
and the model index is estimated by minimising $\min_{\theta_{k}\in\mathbb{T}_{k}}R_{k,n}(\theta_{k})+\mathrm{pen}_{k,n}$
over $k$. In our mixture order-selection setting, one may identify
$\mathbb{T}_{k}$ with $\mathbb{S}_{k}$ and take 
\[
\ell_{k}(x;\psi_{k})=-\log f_{k}(x;\psi_{k}),\qquad R_{k,n}(\psi_{k})=\ell_{k,n}(\psi_{k}),\qquad r_{k}(\psi_{k})=\ell_{k}(\psi_{k}),
\]
so that the framework coincides with penalised maximum likelihood. 
\begin{description}
\item [{(N-A1)}] $\mathbb{T}_{k}$ is compact and there exists $\tau_{k}\in\mathbb{T}_{k}$
such that $\mathrm{E}\{\ell_{k}(X;\tau_{k})^{2}\}<\infty$. 
\item [{(N-A2)}] There exists a measurable function $C_{k}:\mathbb{X}\to\mathbb{R}_{\ge0}$
such that $\mathrm{E}\{C_{k}(X)^{2}\}<\infty$, and for each $\theta_{k},\tau_{k}\in\mathbb{T}_{k}$
and almost every $x\in\mathbb{X}$, 
\[
|\ell_{k}(x;\theta_{k})-\ell_{k}(x;\tau_{k})|\le C_{k}(x)\|\theta_{k}-\tau_{k}\|.
\]
\end{description}
\noindent\citet{nguyen2024panic} then defines the PanIC class of
penalties by the conditions: 
\begin{description}
\item [{(N-B1)}] $\mathrm{pen}_{k,n}>0$ for each $n$ and $\mathrm{pen}_{k,n}=o(1)$
as $n\to\infty$. 
\item [{(N-B2)}] If $k<l$, then $\sqrt{n}\{\mathrm{pen}_{l,n}-\mathrm{pen}_{k,n}\}\to\infty$
in probability as $n\to\infty$. 
\end{description}
\bibliographystyle{apalike2}
\bibliography{bib}

\end{document}